%
%

\input ./style/arxiv-general.cfg
\documentclass[MSNbibl,number,citesort,dvips]{arxbj}
\makeatletter
   \@ifpackageloaded{graphicx}{}{\usepackage{graphicx}}
\makeatother
\usepackage{mathrsfs}


\volume{22}
\issue{4}
\pubyear{2016}
\firstpage{2486}
\lastpage{2520}
\doi{10.3150/15-BEJ735}
\docsubty{FLA}

\makeatletter
\newcommand{\eqref}[1]{(\ref{#1})}
\newtheorem{thmm}{Theorem}[section]
\newtheorem{lem}[thmm]{Lemma}
\newtheorem{cor}[thmm]{Corollary}
\newremark{rmk}[thmm]{Remark}
\newtheorem{prop}[thmm]{Propostion}

\newproclaim{definition}[thmm]{Definition}
\newremark{ex}[thmm]{Example}
\newproclaim{ass}{Assumption}

\newcommand{\dd}{ \mathrm{d}}
\newcommand{\R}{\mathbb{R}}
\newcommand{\F}{\mathcal{F}}
\newcommand{\N}{\mathbb{N}}

\newcommand{\E}{\mathbb{E}}
\newcommand{\1}{\mathbf{1}}
\renewcommand{\P}{\mathbb{P}}
\newcommand{\Q}{\mathbb{Q}}
\newcommand{\Hc}{\mathcal{H}}
\newcommand{\oP}{\overline{P}}
\newcommand{\var}{\operatorname{var}}
\newcommand{\too}{\rightarrow}
\newcommand{\BS}{\mathbb{S}}
\newcommand{\CC}{\mathscr{C}}

\makeatother

\begin{document}
\begin{frontmatter}

\title{Pathwise stochastic integrals for model free~finance}

\runtitle{Pathwise stochastic integrals for model free finance}

\begin{aug}
\author[A]{\inits{N.}\fnms{Nicolas} \snm{Perkowski}\corref{}\ead[label=e1]{perkowski@ceremade.dauphine.fr}}
\and
\author[B]{\inits{D.J.}\fnms{David J.} \snm{Pr\"omel}\ead[label=e2]{proemel@math.hu-berlin.de}}
\address[A]{CEREMADE \& CNRS UMR 7534,
Universit\'e Paris-Dauphine,
France.\\
\printead{e1}}
\address[B]{Humboldt-Universit\"at zu Berlin,
Institut f\"ur Mathematik,
Germany.\\
\printead{e2}}
%
\end{aug}

%
\received{\smonth{11} \syear{2014}}
%
\revised{\smonth{3} \syear{2015}}

%
\begin{abstract}
We present two different approaches to stochastic integration in
frictionless model free financial mathematics. The first one is in the
spirit of It\^o's integral and based on a certain topology which is
induced by the outer measure corresponding to the minimal superhedging
price. The second one is based on the controlled rough path integral.
We prove that every ``typical price path'' has a naturally associated
It\^o rough path, and justify the application of the controlled rough
path integral in finance by showing that it is the limit of
non-anticipating Riemann sums, a new result in itself. Compared to the
first approach, rough paths have the disadvantage of severely
restricting the space of integrands, but the advantage of being a
Banach space theory.

Both approaches are based entirely on financial arguments and do not
require any probabilistic structure.
\end{abstract}

%
\begin{keyword}
\kwd{F\"ollmer integration}
\kwd{model uncertainty}
\kwd{rough path}
\kwd{stochastic integration}
\kwd{Vovk's outer measure}
\end{keyword}
\end{frontmatter}

\section{Introduction}\label{sec1}

In this paper, we use Vovk's~\cite{Vovk2012} game-theoretic approach to
develop two different techniques of stochastic integration in
frictionless model free financial mathematics. A priori the integration
problem is highly non-trivial in the model free context since we do not
want to assume any probabilistic, respectively, semimartingale
structure. Therefore, we do not have access to It\^o integration and
most known techniques completely break down. There are only two general
solutions to the integration problem in a non-probabilistic continuous
time setting that we are aware of. One was proposed by~\cite
{Dolinsky2013}, who simply restrict themselves to trading strategies
(integrands) of bounded variation. While this already allows to solve
many interesting problems, it is not a very natural assumption to make
in a frictionless market model. Indeed, in~\cite{Dolinsky2013} a
general duality approach is developed for pricing path-dependent
derivatives that are Lipschitz continuous in the supremum norm, but so
far their approach does not allow to treat derivatives depending on the
volatility.

Another interesting solution was proposed by~\cite{Davis2013} (using an
idea which goes back to \cite{Lyons1995}). They restrict the set of
``possible price paths'' to those admitting a quadratic variation. This
allows them to apply F\"ollmer's pathwise It\^{o} calculus~\cite
{Follmer1981} to define pathwise stochastic integrals of the form $\int
\nabla F(S) \,\dd S$. In~\cite{Lyons1995}, that approach was used to
derive prices for American and European options under volatility
uncertainty. In~\cite{Davis2013}, the given data is a finite number of
European call and put prices and the derivative to be priced is a
weighted variance swap. The restriction to the set of paths with
quadratic variation is justified by referring to Vovk~\cite{Vovk2012},
who proved that ``typical price paths'' (to be defined below) admit a
quadratic variation.

In our first approach, we do not restrict the set of paths and work on
the space $\Omega$ of $d$-dimensional continuous paths (which represent
all possible asset price trajectories). We follow Vovk in introducing
an outer measure on $\Omega$ which is defined as the pathwise minimal
superhedging price (in a suitable sense), and therefore has a purely
financial interpretation and does not come from an artificially imposed
probabilistic structure. Our first observation is that Vovk's outer
measure allows us to define a topology on processes on $\Omega$, and
that the ``natural It\^o integral'' on step functions is in a certain
sense continuous in that topology. This allows us to extend the
integral to c\`adl\`ag adapted integrands, and we call the resulting
integral ``model free It\^o integral''. We stress that the entire
construction is based only on financial arguments.

Let us also stress that it is the \emph{continuity} of our integral
which is the most important aspect. Without reference to any topology,
the construction would certainly not be very useful, since already in
the classical probabilistic setting virtually all applications of the
It\^o integral (SDEs, stochastic optimization, duality theory, \ldots)
are based on the fact that it is a continuous operator.

This also motivates our second approach, which is more in the spirit
of~\cite{Lyons1995,Davis2013,Dolinsky2013}. While in the first approach
we do have a continuous operator, it is only continuous with respect to
a sequence of pseudometrics and it seems impossible to find a Banach
space structure that is compatible with it. This is a pity since Banach
space theory is one of the key tools in the classical theory of
financial mathematics, as emphasized, for example, in \cite{Delbaen2001}.
However, using the model free It\^o integral we are able to show that
every ``typical price path'' has a natural It\^o rough path associated
to it. Since in financial applications we can always restrict ourselves
to typical price paths, this observation opens the door for the
application of the controlled rough path integral~\cite
{Lyons1998,Gubinelli2004} in model free finance. Controlled rough path
integration has the advantage of being an entirely linear Banach space
theory which simultaneously extends:
\begin{itemize}
\item the Riemann--Stieltjes integral of $S$ against functions of
bounded variation which was used by \cite{Dolinsky2013};
\item the Young integral~\cite{Young1936}: typical price paths have
finite $p$-variation for every $p > 2$, and therefore for every $F$ of
finite $q$-variation for $1\leq q < 2$ (so that $1/p + 1/q > 1$), the
integral $\int F \,\dd S$ is defined as limit of non-anticipating Riemann sums;
\item F\"ollmer's~\cite{Follmer1981} pathwise It\^o integral, which was
used by \cite{Lyons1995,Davis2013}. That this last integral is a
special case of the controlled rough path integral is, to the best of
our knowledge, proved rigorously for the first time in this paper,
although also~\cite{Friz2014} contains some observations in that direction.
\end{itemize}
In other words, our second approach covers all previously known
techniques of integration in model free financial mathematics, while
the first approach is much more general but at the price of leaving the
Banach space world.

There is only one pitfall: the rough path integral is usually defined
as a limit of compensated Riemann sums which have no obvious financial
interpretation. This sabotages our entire philosophy of only using
financial arguments. That is why we show that under some weak condition
every rough path integral $\int F \,\dd S$ is given as limit of
non-anticipating Riemann sums that do not need to be compensated -- the
first time that such a statement is shown for general rough path
integrals. Of course, this will not change anything in concrete
applications, but it is of utmost importance from a philosophical point
of view. Indeed, the justification for using the It\^o integral in
classical financial mathematics is crucially based on the fact that it
is the limit of non-anticipating Riemann sums, even if in ``every day
applications'' one never makes reference to that; see, for example, the
discussion in~\cite{Lyons1995}.

\textit{Plan of the paper}.
Below we present a very incomplete list of solutions to the stochastic
integration problem under model uncertainty and in a discrete time
model free context (both a priori much simpler problems than the
continuous time model free case), and we introduce some notations and
conventions that will be used throughout the paper. In Section~\ref{sec:vovk}, we briefly recall Vovk's game-theoretic approach to
mathematical finance and introduce our outer measure. We also construct
a topology on processes which is induced by the outer measure.
Section~\ref{sec:cpi} is devoted to the construction of the model free
It\^o integral. Section~\ref{sec:rough paths} recalls some basic
results from rough path theory, and continues by constructing rough
paths associated to typical price paths. Here we also prove that the
rough path integral is given as a limit of non-anticipating Riemann
sums. Furthermore, we compare F\"ollmer's pathwise It\^o integral with
the rough path integral and prove that the latter is an extension of
the former.
Appendix~\ref{a:hoeffding} recalls Vovk's pathwise Hoeffding
inequality. In Appendix~\ref{a:davie}, we show that a result of Davie
which also allows to calculate rough path integrals as limit of Riemann
sums is a special case of our results in Section~\ref{sec:rough paths}.

\textit{Stochastic integration under model uncertainty}.
The first works which studied the option pricing problem under model
uncertainty were \cite{Avellaneda1995} and \cite{Lyons1995}, both
considering the case of volatility uncertainty. As described above,
\cite{Lyons1995} is using F\"ollmer's pathwise It\^o integral. In \cite
{Avellaneda1995} the problem is reduced to the classical setting by
deriving a ``worst case'' model for the volatility.

A powerful tool in financial mathematics under model uncertainty is
Karandikar's pathwise construction of the It\^o integral~\cite
{Karandikar1995,Bichteler1981}, which allows to construct the It\^o
integral of a c\`adl\`ag integrand simultaneously under all
semimartingale measures. The crucial point that makes the construction
useful is that the It\^o integral is a continuous operator under every
semimartingale measure. While its pathwise definition would allow us to
use the same construction also in a model free setting, it is not even
clear what the output should signify in that case (e.g., the
construction depends on a certain sequence of partitions and changing
the sequence will change the output). Certainly it is not obvious
whether the Karandikar integral is continuous in any topology once we
dispose of semimartingale measures. A more general pathwise
construction of the It\^o integral was given in \cite{Nutz2012}, but it
suffers from the same drawbacks with respect to applications in model
free finance.

A general approach to stochastic analysis under model uncertainty was
put forward in \cite{Denis2006}, and it is based on quasi sure
analysis. This approach is extremely helpful when working under model
uncertainty, but it also does not allow us to define stochastic
integrals in a model free context.

In a related but slightly different direction, in \cite{Coviello2011}
non-semimartingale models are studied (which do not violate arbitrage
assumptions if the set of admissible strategies is restricted). While
the authors work under one fixed probability measure, the fact that
their price process is not a semimartingale prevents them from using
It\^o integrals, a difficulty which is overcome by working with the
Russo--Vallois integral \cite{Russo1993}.

Of course all these technical problems disappear if we restrict
ourselves to discrete time, and indeed in that case \cite
{Beiglbock2013} develop an essentially fully satisfactory duality
theory for the pricing of derivatives under model uncertainty.

\textit{Notation and conventions}.
Throughout the paper, we fix $T \in(0,\infty)$ and we write $\Omega:=
C([0,T], \R^d)$ for the space of $d$-dimensional continuous paths. The
coordinate process on $\Omega$ is denoted by $S_t(\omega) = \omega(t)$,
$t \in[0,T]$. For $i \in\{1, \ldots, d\}$, we also write $S^i_t(\omega
):= \omega^i(t)$, where $\omega= (\omega^1, \ldots, \omega^d)$. The
filtration $(\F_t)_{t \in[0,T]}$ is defined as $\F_t := \sigma(S_s: s
\le t)$, and we set $\F:= \F_T$. Stopping times $\tau$ and the
associated $\sigma$-algebras $\F_\tau$ are defined as usual.

Unless explicitly stated otherwise, inequalities of the type $F_t \ge
G_t$, where $F$ and $G$ are processes on $\Omega$, are supposed to hold
for all $\omega\in\Omega$, and not modulo null sets, as it is usually
assumed in stochastic analysis.

The indicator function of a set $A$ is denoted by $\1_A$.

A \emph{partition} $\pi$ of $[0,T]$ is a finite set of time points,
$\pi= \{0 = t_0 < t_1 < \cdots< t_m = T\}$. Occasionally, we will
identify $\pi$ with the set of intervals $\{[t_0,t_1], [t_1, t_2], \ldots
, [t_{m-1}, t_m]\}$, and write expressions like $\sum_{[s,t] \in\pi}$.

For $f\colon[0,T] \too\R^n$ and $t_1,t_2 \in[0,T]$, denote
$f_{t_1,t_2} := f(t_2) - f(t_1)$ and define the $p$-variation of $f$
restricted to $[s,t] \subseteq[0,T]$ as
%
\begin{equation}
\label{eq:def p-var} \Vert f \Vert_{p\mbox{-}\var,[s,t]} := \sup \Biggl\{ \Biggl(\sum
_{k=0}^{m-1} |f_{t_{k},t_{k+1}}|^p
\Biggr)^{1/p}: s = t_0 < \cdots< t_m = t, m \in
\N \Biggr\},\qquad p>0,
\end{equation}
(possibly taking the value $+\infty$). We set $\| f \|_{p\mbox{-}\var}
:= \| f \|_{p\mbox{-}\var,[0,T]}$. We write $\Delta_T := \{(s,t): 0 \le
s \le t \le T\}$ for the simplex and define the $p$-variation of a
function $g\colon\Delta_T \too\R^n$ in the same manner, replacing
$f_{t_{k},t_{k+1}}$ in~\eqref{eq:def p-var} by $g(t_{k},t_{k+1})$.

For $\alpha>0$ and $\lfloor\alpha\rfloor:= \max\{z\in\mathbb{Z} :
z\leq\alpha\}$, the space $C^\alpha$ consists of those functions that
are $\lfloor\alpha\rfloor$ times continuously differentiable, with
$(\alpha- \lfloor\alpha\rfloor)$-H\"older continuous partial
derivatives of order $\lfloor\alpha\rfloor$ (and with continuous
partial derivatives of order $\alpha$ in case $\alpha= \lfloor\alpha
\rfloor$). The space $C^\alpha_b$ consists of those functions in
$C^\alpha$ that are bounded, together with their partial derivatives,
and we define the norm $\Vert\cdot\Vert_{C^\alpha_b}$ by setting
\[
\Vert f \Vert_{C^\alpha_b} := \sum_{k=0}^{\lfloor\alpha\rfloor}
\bigl\| D^k f \bigr\|_\infty+ \1_{\alpha>\lfloor\alpha\rfloor} \bigl\| D^{\lfloor
\alpha\rfloor} f
\bigr\|_{\alpha- \lfloor\alpha\rfloor},
\]
where $\Vert\cdot\Vert_\beta$ denotes the $\beta$-H\"older norm for
$\beta\in(0,1)$, and $\Vert\cdot\Vert_\infty$ denotes the
supremum norm.

For $x,y \in\R^d$, we write $xy := \sum_{i=1}^d x_i y_i$ for the usual
inner product. However, often we will encounter terms of the form $\int
S \,\dd S$ or $S_s S_{s,t}$ for $s,t \in[0,T]$, where we recall that $S$
denotes the coordinate process on $\Omega$. Those expressions are to be
understood as the matrix $(\int S^i \,\dd S^j)_{1 \le i,j \le d}$, and
similarly for $S_s S_{s,t}$. The interpretation will be usually clear
from the context, otherwise we will make a remark to clarify things.

We use the notation $a \lesssim b$ if there exists a constant $c > 0$,
independent of the variables under consideration, such that $a \le c
\cdot b$, and we write $a\simeq b$ if $a \lesssim b$ and $b \lesssim
a$. If we want to emphasize the dependence of $c$ on the variable $x$,
then we write $a(x) \lesssim_x b(x)$.

We make the convention that $0/0 := 0\cdot\infty:= 0$, $1\cdot\infty
:= \infty$ and $\inf\varnothing:= \infty$.

\section{Superhedging and typical price paths}\label{sec:vovk}

\subsection{The outer measure and its basic properties}

In a recent series of papers, Vovk \cite{Vovk2008,Vovk2011,Vovk2012}
has introduced a model free, hedging based approach to mathematical
finance that uses arbitrage considerations
to examine which properties are satisfied by ``typical price paths''.
This is achieved with the help of an outer measure given by the
cheapest superhedging price.

Recall that $T \in(0,\infty)$ and $\Omega= C([0,T], \R^d)$ is the
space of continuous paths, with coordinate process $S$, natural
filtration $(\F_t)_{t \in[0,T]}$, and $\F= \F_T$. A process $H\colon
\Omega\times[0,T] \rightarrow\R^d$ is called a \emph{simple
strategy} if there exist stopping times $0 = \tau_0 < \tau_1 < \cdots,$
and $\F_{\tau_n}$-measurable bounded functions $F_n\colon\Omega
\rightarrow\R^d$, such that for every $\omega\in\Omega$ we have $\tau
_n(\omega) = \infty$ for all
but finitely many $n$, and such that
\[
H_t(\omega) = \sum_{n=0}^\infty
F_n(\omega) \1_{(\tau_n(\omega),\tau
_{n+1}(\omega)]}(t).
\]
In that case, the integral
\[
(H \cdot S)_t(\omega) := \sum_{n=0}^\infty
F_n(\omega) \bigl(S_{\tau_{n+1}
\wedge t}(\omega) - S_{\tau_n\wedge t}(
\omega)\bigr) = \sum_{n=0}^\infty
F_n(\omega) S_{\tau_n\wedge t, \tau_{n+1} \wedge t}(\omega)
\]
is well defined for all $\omega\in\Omega$, $t \in[0,T]$. Here
$F_n(\omega) S_{\tau_n\wedge t, \tau_{n+1} \wedge t}(\omega)$ denotes the
usual inner product on $\R^d$. For $\lambda> 0$, a simple strategy $H$
is called $\lambda$-\textit{admissible} if $(H\cdot S)_t(\omega) \ge-
\lambda$ for all $\omega\in\Omega$, $t \in[0,T]$. The set of $\lambda
$-admissible simple strategies is denoted by $\mathcal{H}_\lambda$.

\begin{definition}
The \emph{outer measure} of $A \subseteq\Omega$ is defined as the
cheapest superhedging price for $\1_A$, that is
\[
\overline{P}(A) := \inf \Bigl\{\lambda> 0: \exists \bigl(H^n
\bigr)_{n\in\N} \subseteq\mathcal{H}_\lambda\mbox{ s.t. } \liminf
_{n\rightarrow\infty
} \bigl(\lambda+ \bigl(H^n\cdot S
\bigr)_T(\omega)\bigr) \ge\1_A(\omega)\ \forall\omega
\in\Omega \Bigr\}.
\]
A set of paths $A \subseteq\Omega$ is called a \emph{null set} if it
has outer measure zero.
\end{definition}

The term outer measure will be justified by Lemma~\ref{lem:outer}
below. Our definition of $\oP$ is very similar to the one used by
Vovk~\cite{Vovk2012}, but not quite the same. For a discussion, see
Section~\ref{ss:relation to vovk} below.

By definition, every It\^{o} stochastic integral is the limit of
stochastic integrals against simple strategies. Therefore, our
definition of the cheapest superhedging price is essentially the same
as in the classical setting, with one important difference: we require
superhedging for \emph{all} $\omega\in\Omega$, and not just almost surely.

\begin{rmk}[(\cite{Vovk2012}, page~564)]\label{r:vovk take supremum in content}
An equivalent definition of $\overline{P}$ would be
\[
\widetilde{P}(A) := \inf \Bigl\{\lambda> 0: \exists \bigl(H^n
\bigr)_{n\in\N} \subseteq\mathcal{H}_\lambda\mbox{ s.t. } \liminf
_{n\rightarrow\infty
} \sup_{t \in[0,T]} \bigl(\lambda+
\bigl(H^n\cdot S\bigr)_t(\omega)\bigr) \ge
\1_A(\omega )\ \forall\omega\in\Omega \Bigr\}.
\]
Clearly, $\widetilde{P} \le\overline{P}$. To see the opposite
inequality, let $\widetilde{P}(A) < \lambda$. Let $(H^n)_{n\in\N}
\subset\mathcal{H}_\lambda$ be a sequence of
simple strategies such that $\liminf_{n\rightarrow\infty} \sup_{t \in
[0,T]} (\lambda+ (H^n\cdot S)_t) \ge\1_A$, and let $\varepsilon> 0$.
Define $\tau_n:= \inf\{t \in[0,T]: \lambda+ \varepsilon+ (H^n \cdot
S)_t \ge1\}$. Then the stopped strategy $G^n_t(\omega) := H^n_t(\omega
) \1_{[0, \tau_n(\omega))}(t)$
is in $\mathcal{H}_{\lambda} \subseteq\mathcal{H}_{\lambda+
\varepsilon}$ and
\[
\liminf_{n \rightarrow\infty} \bigl(\lambda+ \varepsilon+
\bigl(G^n\cdot S\bigr)_T(\omega)\bigr) \ge\liminf
_{n \rightarrow\infty} \1_{\{\lambda+ \varepsilon+ \sup
_{t \in[0,T]} (H^n\cdot S)_t \ge1\}}(\omega) \ge\1_A(
\omega).
\]
Therefore $\overline{P}(A)\le\lambda+ \varepsilon$, and since
$\varepsilon> 0$ was arbitrary $\overline{P} \le\widetilde{P}$, and
thus $\overline{P} = \widetilde{P}$.
\end{rmk}

\begin{lem}[(\cite{Vovk2012}, Lemma~4.1)]\label{lem:outer}
$\overline{P}$ is in fact an outer measure, that is, a non-negative
function defined on the subsets of $\Omega$ such that
\begin{longlist}[--]
\item[--] $\overline{P}(\varnothing) = 0$;
\item[--] $\overline{P}(A) \le\overline{P}(B)$ if $A \subseteq B$;
\item[--] if $(A_n)_{n\in\N}$ is a sequence of subsets of $\Omega$, then
$\overline{P}(\bigcup_n A_n) \le\sum_n \overline{P}(A_n)$.
\end{longlist}
\end{lem}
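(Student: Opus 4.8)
The plan is to check the three defining properties of an outer measure directly from the definition of $\overline{P}$; only countable subadditivity requires real work. Nonnegativity is built in, as the infimum ranges over $\lambda > 0$. For $\overline{P}(\emptyset) = 0$, note that for every $\lambda > 0$ the constant sequence $H^n \equiv 0$ lies in $\mathcal{H}_\lambda$ and satisfies $\liminf_{n\to\infty}(\lambda + (H^n\cdot S)_T(\omega)) = \lambda \ge 0 = \1_\emptyset(\omega)$ for all $\omega$, so $\overline{P}(\emptyset) \le \lambda$ for every $\lambda > 0$. Monotonicity is equally soft: if $A \subseteq B$ then $\1_A \le \1_B$ pointwise, so any sequence of admissible strategies superhedging $\1_B$ also superhedges $\1_A$; hence the set over which the infimum defining $\overline{P}(A)$ is taken contains the one defining $\overline{P}(B)$, and $\overline{P}(A) \le \overline{P}(B)$.

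For countable subadditivity I may assume $\sum_n \overline{P}(A_n) < \infty$, since otherwise there is nothing to prove (and in particular $\overline{P}(A_n) < \infty$ for every $n$). Fix $\varepsilon > 0$. The set of prices $\lambda > 0$ at which $\1_{A_n}$ admits a superhedge is upward closed (enlarging $\lambda$ both enlarges $\mathcal{H}_\lambda$ and makes the $\liminf$ inequality easier to satisfy), so there is a sequence $(H^{n,k})_{k\in\N} \subseteq \mathcal{H}_{\lambda_n}$ with $\lambda_n := \overline{P}(A_n) + \varepsilon 2^{-n-1}$ such that $\liminf_{k\to\infty}(\lambda_n + (H^{n,k}\cdot S)_T(\omega)) \ge \1_{A_n}(\omega)$ for all $\omega$. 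Put $\lambda := \sum_n \lambda_n \le \sum_n \overline{P}(A_n) + \varepsilon < \infty$ and, for each $k$, define the simple strategy $G^k := \sum_{n=0}^{k} H^{n,k}$ (a finite sum of simple strategies is again one). Since each $(H^{n,k}\cdot S)_t \ge -\lambda_n$ we get $(G^k\cdot S)_t \ge -\sum_{n=0}^k \lambda_n \ge -\lambda$, so $G^k \in \mathcal{H}_\lambda$.

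The decisive observation is that for every $\omega$ and every $k$,
\[
  \lambda + (G^k\cdot S)_T(\omega) = \sum_{n=0}^{k}\big(\lambda_n + (H^{n,k}\cdot S)_T(\omega)\big) + \sum_{n=k+1}^{\infty}\lambda_n,
\]
and every summand on the right is nonnegative by $\lambda_n$-admissibility. Hence, if $\omega \in \bigcup_n A_n$, say $\omega \in A_m$, then for all $k \ge m$ we may discard all terms except the $n = m$ one to get $\lambda + (G^k\cdot S)_T(\omega) \ge \lambda_m + (H^{m,k}\cdot S)_T(\omega)$; taking $\liminf_{k\to\infty}$ gives $\liminf_k(\lambda + (G^k\cdot S)_T(\omega)) \ge \1_{A_m}(\omega) = 1 = \1_{\bigcup_n A_n}(\omega)$. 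For $\omega \notin \bigcup_n A_n$ the bound $\liminf_k(\lambda + (G^k\cdot S)_T(\omega)) \ge 0 = \1_{\bigcup_n A_n}(\omega)$ is immediate from $G^k \in \mathcal{H}_\lambda$. Thus $(G^k)_k$ superhedges $\1_{\bigcup_n A_n}$ at price $\lambda$, so $\overline{P}(\bigcup_n A_n) \le \lambda \le \sum_n \overline{P}(A_n) + \varepsilon$, and letting $\varepsilon \downarrow 0$ concludes.

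I expect the only real obstacle to be the bookkeeping in the subadditivity step: superhedging is formulated through a $\liminf$ over an entire sequence of strategies, so naively superposing the countably many sequences $(H^{n,k})_k$ would create an ill-defined infinite sum. The remedy is the truncated diagonal $G^k = \sum_{n=0}^k H^{n,k}$, combined with the crucial use of $\lambda_n$-admissibility, which makes every discarded term nonnegative and lets the single retained term $\lambda_m + (H^{m,k}\cdot S)_T$ carry the lower bound.
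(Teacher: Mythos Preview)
Your proof is correct and follows essentially the same route as the paper: the same choice $\lambda_n = \overline{P}(A_n) + \varepsilon 2^{-n-1}$ and the same truncated diagonal $G^k = \sum_{n=0}^{k} H^{n,k}$, with admissibility used to ensure nonnegativity of the discarded terms. The only cosmetic difference is that the paper bounds $\liminf_k(\lambda + (G^k\cdot S)_T)$ from below by $\1_{\bigcup_{n\le k} A_n}$ via Fatou's lemma over the finite sum, whereas you fix an index $m$ with $\omega \in A_m$ and keep only that summand; both arguments are equivalent.
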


\begin{pf}
Monotonicity and $\overline{P}(\varnothing)=0$ are obvious. So let
$(A_n)$ be a sequence of subsets of $\Omega$.
Let $\varepsilon> 0$, $n \in\N$, and let $(H^{n,m})_{m\in\N}$
be a sequence of $(\overline{P}(A_n) + \varepsilon
2^{-n-1})$-admissible simple strategies such that
$\liminf_{m\rightarrow\infty} (\overline{P}(A_n) + \varepsilon
2^{-n-1} + (H^{n,m}\cdot S)_T) \ge\1_{A_n}$. Define for $m \in\N$ the
$(\sum_n \overline{P}(A_n) + \varepsilon)$-admissible
simple strategy $G^m := \sum_{n=0}^m H^{n,m}$. Then by Fatou's lemma
\begin{eqnarray*}
\liminf_{m \rightarrow\infty} \Biggl(\sum_{n=0}^\infty
\overline {P}(A_n) + \varepsilon+ \bigl(G^m \cdot S
\bigr)_T \Biggr) & \ge&\sum_{n=0}^k
\Bigl( \overline{P}(A_n) + \varepsilon2^{-n-1} + \liminf
_{m \rightarrow\infty} \bigl(H^{n,m} \cdot S\bigr)_T
\Bigr)
\\
& \ge&\1_{\bigcup_{n=0}^k A_n}
\end{eqnarray*}
for all $k \in\N$. Since the left-hand side does not depend on $k$, we
can replace $\1_{\bigcup_{n=0}^k A_n}$ by $\1_{\bigcup_n A_n}$ and the
proof is complete.
\end{pf}

Maybe the most important property of $\overline{P}$ is that there
exists an arbitrage interpretation for sets with outer measure zero.

\begin{lem}
A set $A \subseteq\Omega$ is a null set if and only if there exists a
sequence of 1-admissible simple strategies $(H^n)_n \subset\mathcal
{H}_1$ such that
%
\begin{equation}
\label{e:NA1 with simple strategies} \liminf_{n \rightarrow\infty} \bigl(1 + \bigl(H^n
\cdot S\bigr)_T(\omega)\bigr) \ge \infty\cdot\1_A(
\omega),
\end{equation}
where we use the convention $0 \cdot\infty= 0$ and $1\cdot\infty:=
\infty$.
\end{lem}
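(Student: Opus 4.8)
The statement is an equivalence, so I would prove the two implications separately, and the key idea in both directions is a Borel–Cantelli / geometric-series type argument, exploiting that $\oP$ is an outer measure (Lemma~\ref{lem:outer}) and countably subadditive.

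First I would handle the easier direction: if there exist $(H^n)_n \subset \Hc_1$ with $\liminf_n (1 + (H^n \cdot S)_T) \ge \infty \cdot \1_A$, then in particular for each fixed $\lambda > 0$ the same sequence satisfies $\liminf_n (\lambda + (H^n \cdot S)_T) \ge \liminf_n (1 + (H^n \cdot S)_T) \ge \1_A$ on $A$ (since the left side is eventually $\ge \lambda$ on $\Omega \setminus A$ automatically, as $H^n \in \Hc_1 \subseteq \Hc_\lambda$ guarantees $(H^n\cdot S)_T \ge -1 \ge -\lambda$, and on $A$ the limit is $+\infty \ge 1$). Hence $\oP(A) \le \lambda$ for every $\lambda > 0$, so $\oP(A) = 0$ and $A$ is a null set.

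For the converse, suppose $\oP(A) = 0$. Then for every $k \in \N$ there is a sequence $(H^{k,m})_m \subset \Hc_{2^{-k}}$ with $\liminf_{m\to\infty}(2^{-k} + (H^{k,m}\cdot S)_T) \ge \1_A$. For each $k$ pick $m_k$ large enough that, setting $G^k := H^{k,m_k}$, we have $2^{-k} + (G^k \cdot S)_T \ge \1_A - 2^{-k}$, i.e. $(G^k\cdot S)_T \ge \1_A - 2^{-k+1}$ (here one should be slightly careful: $\liminf$ does not give a uniform choice of $m_k$, so instead I would argue directly from the definition of $\oP(A) = 0 < 2^{-k}$ that there is a sequence of $2^{-k}$-admissible strategies superhedging $\1_A$ in the $\liminf$ sense, and then use the equivalent formulation $\widetilde P = \oP$ of Remark~\ref{r:vovk take supremum in content} together with a stopping argument to extract for each $k$ a \emph{single} strategy $G^k \in \Hc_{2^{-k}}$ with $2^{-k} + (G^k\cdot S)_T \ge \1_A$ on all of $\Omega$ — this is possible because once $\widetilde P(A) < 2^{-k}$, stopping the strategy the first time $2^{-k} + (H^{k,m}\cdot S)_t \ge 1$ yields, for $m$ large, a strategy that reaches value $\ge 1$ on $A$; a more careful diagonal extraction may be needed, and this extraction is the main technical obstacle). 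Now define $H^n := \sum_{k=1}^n 2^{k/2} G^k$ — or more robustly, rescale so that $H^n := \sum_{k=1}^n c_k G^k$ with $c_k > 0$ chosen so that $\sum_k c_k 2^{-k} \le 1$ (e.g.\ $c_k = 2^{k/2}$ gives $\sum_k 2^{-k/2} < \infty$, then normalize). Each $c_k G^k$ is $c_k 2^{-k}$-admissible, so $H^n \in \Hc_1$ after normalization, and on $A$ we get $1 + (H^n\cdot S)_T \ge \sum_{k=1}^n c_k (G^k\cdot S)_T \ge \sum_{k=1}^n c_k(1 - 2^{-k}) \to \infty$ since $\sum_k c_k = \infty$ while $\sum_k c_k 2^{-k} < \infty$. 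Thus $\liminf_n (1 + (H^n\cdot S)_T) \ge \infty$ on $A$, which is \eqref{e:NA1 with simple strategies}.

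The main obstacle, as flagged above, is the passage from the $\liminf$-style superhedging in the definition of $\oP$ to a \emph{single} strategy $G^k$ for each level $k$ that superhedges $\1_A$ pathwise up to error $2^{-k}$; the $\liminf$ is not a limit and the index $m$ at which the bound \textquotedblleft kicks in\textquotedblright{} depends on $\omega$. The clean way around this is precisely Remark~\ref{r:vovk take supremum in content}: the equivalent description via $\widetilde P$ together with the stopping-time trick there shows that whenever $\oP(A) < \lambda$ one can find a strategy in $\Hc_\lambda$ whose running maximum reaches $1$ on $A$ in the $\liminf$ sense, and then one more stopping argument collapses this to a single strategy with a deterministic lower bound. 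Once that is in hand, the geometric summation above is routine.
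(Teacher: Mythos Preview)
Your easy direction contains a small but genuine slip: the inclusion $\Hc_1 \subseteq \Hc_\lambda$ and the inequality $-1 \ge -\lambda$ both fail for $\lambda < 1$, which is precisely the range you need. The fix is the paper's: scale the strategies, using that $\lambda H^n \in \Hc_\lambda$ and $\liminf_n(\lambda + (\lambda H^n\cdot S)_T) = \lambda \liminf_n(1 + (H^n\cdot S)_T) \ge \lambda \cdot \infty \cdot \1_A \ge \1_A$.

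The hard direction has a more serious gap at exactly the point you flag. You want to extract, for each level $k$, a \emph{single} strategy $G^k \in \Hc_{2^{-k}}$ with $2^{-k} + (G^k\cdot S)_T \ge \1_A$ pathwise, and you propose to use Remark~\ref{r:vovk take supremum in content} for this. But that remark does not help: both $\overline P$ and $\widetilde P$ are defined via a $\liminf$ over a sequence of strategies, and the stopping trick there converts a running supremum into a terminal value \emph{within} each strategy of the sequence --- it does not collapse the sequence to a single strategy. The index $m$ at which the bound is attained genuinely depends on $\omega$, and no amount of stopping removes that dependence.

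The paper sidesteps this entirely with a diagonal construction that you gesture at but do not carry out: given $(H^{n,m})_m \subset \Hc_{2^{-n-1}}$ with $\liminf_m(2^{-n-1} + (H^{n,m}\cdot S)_T) \ge \1_A$, set $G^m := \sum_{n=0}^m H^{n,m}$. Then $G^m \in \Hc_1$, and since each summand $2^{-n-1} + (H^{n,m}\cdot S)_T$ is nonnegative, Fatou gives for every fixed $k$
\[
  \liminf_{m\to\infty}\bigl(1 + (G^m\cdot S)_T\bigr) \ge \sum_{n=0}^k \liminf_{m\to\infty}\bigl(2^{-n-1} + (H^{n,m}\cdot S)_T\bigr) \ge (k+1)\1_A.
\]
Letting $k\to\infty$ finishes. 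The point is that one never needs a single strategy per level; summing the $m$-th member of each sequence and keeping the $\liminf$ in $m$ is enough.
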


\begin{pf}
If such a sequence exists, then we can scale it down by an arbitrary
factor $\varepsilon> 0$ to obtain a sequence of strategies in $\mathcal
{H}_\varepsilon$ that superhedge $\1_A$, and therefore $\overline{P}(A)
= 0$.

If conversely $\overline{P}(A) = 0$, then for every $n\in\N$ there
exists a sequence of simple strategies $(H^{n,m})_{m \in\N} \subset
\mathcal{H}_{2^{-n-1}}$
such that $2^{-n-1} + \liminf_{m \rightarrow\infty} (H^{n,m}\cdot
\omega)_T \geq\1_A(\omega)$ for all $\omega\in\Omega$. Define $G^m
:= \sum_{n=0}^m H^{n,m}$,
so that $G^m \in\mathcal{H}_1$. For every $k \in\N$, we obtain
\[
\liminf_{m\rightarrow\infty} \bigl(1 + \bigl(G^m \cdot S
\bigr)_T \bigr) \ge\sum_{n=0}^k
\Bigl(2^{-n-1} + \liminf_{m \rightarrow\infty} \bigl(H^{n,m}
\cdot S\bigr)_T \Bigr) \ge(k+1) \1_A.
\]
Since the left-hand side does not depend on $k$, the sequence $(G^m)$
satisfies \eqref{e:NA1 with simple strategies}.
\end{pf}

In other words, if a set $A$ has outer measure $0$, then we can make
infinite profit by investing in the paths from $A$, without ever
risking to lose more than the initial capital 1.

This motivates the following definition.

\begin{definition}\label{def:typical}
We say that a property (P) holds for \emph{typical price paths} if the
set $A$ where (P) is violated is a null set.
\end{definition}

The basic idea of Vovk, which we shall adopt in the following, is that
we only need to concentrate on typical price paths. Indeed,
``non-typical price paths'' can be excluded since they are in a certain
sense ``too good to be true'': they would allow investors to realize
infinite profit while at the same time taking essentially no risk.

\subsection{Arbitrage notions and link to classical mathematical finance}

Before we continue, let us discuss different notions of arbitrage and
link our outer measure to classical mathematical finance. We start by
observing that $\overline{P}$ is an outer measure which simultaneously
dominates all local martingale measures on $\Omega$.

\begin{prop}[(\cite{Vovk2012}, Lemma~6.3)]\label{prop:local martingale}
Let $\P$ be a probability measure on $(\Omega, \F)$, such that the
coordinate process $S$ is a $\P$-local martingale, and let $A \in\F$.
Then $\P(A) \le\overline{P}(A)$.
\end{prop}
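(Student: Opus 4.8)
The plan is to show that for any $\lambda$ strictly larger than $\oP(A)$, we have $\P(A) \le \lambda$; letting $\lambda \downarrow \oP(A)$ then gives the claim. So fix $\lambda > \oP(A)$ and pick a sequence $(H^n)_{n \in \N} \subseteq \Hc_\lambda$ with $\liminf_{n \to \infty} (\lambda + (H^n \cdot S)_T) \ge \1_A$ pointwise on $\Omega$. The key observation is that for a simple strategy $H = \sum_k F_k \1_{(\tau_k, \tau_{k+1}]}$ built from bounded $\F_{\tau_k}$-measurable functions and stopping times, the integral process $(H \cdot S)$ is a $\P$-local martingale whenever $S$ is: each increment $F_k S_{\tau_k \wedge \cdot, \tau_{k+1} \wedge \cdot}$ is the stochastic integral of a bounded predictable process against the local martingale $S$, hence a local martingale, and a finite sum of such (which is what $(H^n \cdot S)$ effectively is, since $\tau_k(\omega) = \infty$ eventually for each $\omega$) is again a local martingale.

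The second ingredient is that $\lambda$-admissibility upgrades this local martingale to a supermartingale: since $\lambda + (H^n \cdot S)_t \ge 0$ for all $t$ and $\omega$, the process $\lambda + (H^n \cdot S)$ is a nonnegative $\P$-local martingale, hence (by Fatou applied along a localizing sequence) a nonnegative $\P$-supermartingale. In particular $\E_\P[\lambda + (H^n \cdot S)_T] \le \lambda + (H^n \cdot S)_0 = \lambda$ for every $n$. Now apply Fatou's lemma to the nonnegative random variables $\lambda + (H^n \cdot S)_T$:
\[
  \E_\P\Big[\liminf_{n \to \infty} \big(\lambda + (H^n \cdot S)_T\big)\Big] \le \liminf_{n \to \infty} \E_\P\big[\lambda + (H^n \cdot S)_T\big] \le \lambda.
\]
Since $\liminf_{n \to \infty}(\lambda + (H^n \cdot S)_T) \ge \1_A$ pointwise, the left-hand side is at least $\E_\P[\1_A] = \P(A)$, so $\P(A) \le \lambda$. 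Taking the infimum over $\lambda > \oP(A)$ yields $\P(A) \le \oP(A)$.

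I expect the only genuine subtlety to be the supermartingale property of $\lambda + (H^n \cdot S)$: one must be careful that $S$ being a local martingale under $\P$ makes the simple integral a local martingale (rather than just a martingale, which would need integrability of $S$), and then invoke the standard fact that a nonnegative local martingale is a supermartingale. Everything else — the local martingale property of simple integrals, the Fatou step — is routine. One small point worth noting is that the definition of $\oP$ does not require the $H^n$ themselves to converge, only the liminf of the terminal values to dominate $\1_A$; this is exactly why Fatou's lemma, rather than dominated convergence, is the right tool here.
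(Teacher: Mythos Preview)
Your proof is correct and follows essentially the same route as the paper's: both use Fatou's lemma together with the fact that $\lambda + (H^n\cdot S)$ is a nonnegative $\P$-local martingale, hence a $\P$-supermartingale, to bound $\P(A)$ by any admissible $\lambda$. Your version simply spells out in more detail why the simple integral is a $\P$-local martingale and why nonnegativity upgrades it to a supermartingale.
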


\begin{pf}
Let $\lambda> 0$ and let $(H^n)_{n \in\mathbb{N}} \subseteq\mathcal
{H}_\lambda$ be such that $ \liminf_n(\lambda+ (H^n \cdot S)_T) \ge\1
_A$. Then
\[
\P(A) \le\E_\P\Bigl[\liminf_n \bigl(\lambda+
\bigl(H^n \cdot S\bigr)_T\bigr)\Bigr] \le\liminf
_n \E _\P\bigl[\lambda+ \bigl(H^n
\cdot S\bigr)_T\bigr] \le\lambda,
\]
where in the last step we used that $\lambda+ (H^n \cdot S)$ is a
non-negative $\P$-local martingale and thus a $\P$-supermartingale.
\end{pf}

This already indicates that $\overline P$-null sets are quite
degenerate, in the sense that they are null sets under all local
martingale measures. However, if that was the only reason for our
definition of typical price paths, then a definition based on model
free arbitrage opportunities would be equally valid. A map $X\colon
\Omega\to[0,\infty)$ is a \emph{model free arbitrage opportunity} if
$X$ is not identically 0 and if there exists $c>0$ and a sequence
$(H^n) \subseteq\mathcal H_c$ such that $\liminf_{n \to\infty} (H^n
\cdot S)_T(\omega) = X(\omega)$ for all $\omega\in\Omega$. See \cite
{Davis2007,Acciaio2013} where (a similar) definition is used in the
discrete time setting.

It might then appear more natural to say that a property holds for
typical price paths if the indicator function of its complement is a
model free arbitrage opportunity, rather than working with
Definition~\ref{def:typical}. This ``arbitrage definition'' would also
imply that any property which holds for typical price paths is almost
surely satisfied under every local martingale measure. Nonetheless, we
decidedly claim that our definition is ``the correct one''. First of
all, the arbitrage definition would make our life much more difficult
since it seems not very easy to work with. But of course this is only a
convenience and cannot serve as justification of our approach. Instead,
we argue by relating the two notions to classical mathematical finance.

For that purpose, recall the fundamental theorem of asset pricing \cite
{Delbaen1994}: If $\P$ is a probability measure on $(\Omega,\F)$ under
which $S$ is a semimartingale, then there exists an equivalent measure
$\Q$ such that $S$ is a $\Q$-local martingale if and only if $S$ admits
\emph{no free lunch with vanishing risk} (\textit{NFLVR}). But (NFLVR) is
equivalent to the two conditions \emph{no arbitrage} (\textit{NA}) (intuitively:
no profit without risk) and \emph{no arbitrage opportunities of the
first kind} (\textit{NA1}) (intuitively: no very large profit with a small
risk). The (NA) property holds if for every $c > 0$ and every sequence
$(H^n) \subseteq\mathcal H_c$ for which $\lim_{n\to\infty} (H^n \cdot
S)_T(\omega)$ exists for all $\omega$ we have $\P(\lim_{n\to\infty}
(H^n \cdot S)_T < 0) > 0$ or $\P(\lim_{n\to\infty} (H^n \cdot S)_T = 0)
= 1$. The (NA1) property holds if $\{1+ (H \cdot S)_T: H \in\mathcal
{H}_1\}$ is bounded in $\P$-probability, that is, if
\[
\lim_{c \too\infty} \sup_{ H \in\mathcal{H}_1} \P\bigl(1+(H\cdot
S)_T \ge c\bigr) = 0.
\]
Strictly speaking this is (NA1) with simple strategies, but as observed
by \cite{Kardaras2011b} (NA1) and (NA1) with simple strategies are
equivalent; see also \cite{Ankirchner2005,Imkeller2011}.

It turns out that the arbitrage definition of typical price paths
corresponds to (NA), while our definition corresponds to (NA1).
%
\begin{prop}\label{p:model free NA1 vs classical NA1}
Let $A \in\F$ be a null set, and let $\P$ be a probability measure on
$(\Omega, \F)$ such that the coordinate process satisfies (NA1). Then
$\P(A) = 0$.
\end{prop}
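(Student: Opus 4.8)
The plan is to combine the characterization of null sets proved just above (the lemma following Lemma~\ref{lem:outer}) with a Fatou-type argument against the boundedness in $\P$-probability that (NA1) provides. Since $A$ is a null set, $\overline{P}(A) = 0$, so that lemma yields a sequence of $1$-admissible simple strategies $(H^n)_n \subseteq \mathcal{H}_1$ with
\[
  \liminf_{n \to \infty} \big(1 + (H^n \cdot S)_T(\omega)\big) \ge \infty \cdot \1_A(\omega) \qquad \text{for all } \omega \in \Omega.
\]
In particular, for every $\omega \in A$ we have $\lim_{n\to\infty}\big(1 + (H^n\cdot S)_T(\omega)\big) = \infty$, hence for each fixed level $c > 0$ the path $\omega$ eventually lies in the (measurable, since $H^n$ is a simple strategy) event $A^n_c := \{1 + (H^n\cdot S)_T \ge c\}$. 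Therefore $\1_A \le \liminf_{n\to\infty} \1_{A^n_c}$ pointwise.

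Next I would apply Fatou's lemma under $\P$ (legitimate because $A \in \F$ and all the $A^n_c$ are in $\F$): for every $c > 0$,
\[
  \P(A) \le \E_\P\Big[\liminf_{n\to\infty} \1_{A^n_c}\Big] \le \liminf_{n\to\infty} \P(A^n_c) = \liminf_{n\to\infty} \P\big(1 + (H^n\cdot S)_T \ge c\big) \le \sup_{H \in \mathcal{H}_1} \P\big(1 + (H\cdot S)_T \ge c\big).
\]
Finally, letting $c \to \infty$ and using that (NA1) asserts precisely $\lim_{c\to\infty}\sup_{H\in\mathcal{H}_1}\P(1 + (H\cdot S)_T \ge c) = 0$, one concludes $\P(A) = 0$.

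I do not expect a genuine obstacle here; the argument is essentially a one-line Fatou estimate once the right ingredients are lined up. The only points requiring a little care are: invoking the correct null-set characterization (so that the witnessing strategies are uniformly $1$-admissible, which is what makes them eligible competitors in the (NA1) supremum), checking that the events $A^n_c$ are $\F$-measurable so that Fatou applies, and getting the order of the two limits ($n\to\infty$ then $c\to\infty$) right. If one wanted to avoid Fatou, an equivalent route is to note that $A \subseteq \bigcap_N \bigcup_{n \ge N} A^n_c$ up to the monotonicity in $N$, but the Fatou formulation is cleanest.
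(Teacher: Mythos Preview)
Your argument is correct and essentially identical to the paper's proof: both start from the null-set characterization to get a sequence $(H^n)\subseteq\mathcal{H}_1$ with $1+\liminf_n(H^n\cdot S)_T\ge\infty\cdot\1_A$, observe that consequently $A\subseteq\{\,\liminf_n(H^n\cdot S)_T>c\,\}$ for every $c>0$, bound $\P(A)$ by $\sup_{H\in\mathcal{H}_1}\P((H\cdot S)_T>c)$, and let $c\to\infty$. The only cosmetic difference is that you spell out the Fatou step via $\1_A\le\liminf_n\1_{A^n_c}$, whereas the paper passes directly from the liminf event to the supremum bound in one line.
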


\begin{pf}
Let $(H^n)_{n\in\N} \subseteq\mathcal{H}_1$ be such that $1+\liminf_n (H^n \cdot S)_T \ge\infty\cdot\1_A$. Then for every $c > 0$
\begin{eqnarray*}
\P(A) = \P \Bigl(A \cap \Bigl\{\liminf_{n \rightarrow\infty}
\bigl(H^n\cdot S\bigr)_T > c \Bigr\} \Bigr) \le\sup
_{H \in\mathcal{H}_{1}} \P\bigl( \bigl\{(H \cdot S)_T > c\bigr\}
\bigr).
\end{eqnarray*}
By assumption, the right-hand side converges to 0 as $c \rightarrow
\infty$ and thus $\P(A) = 0$.
\end{pf}

\begin{rmk}
Proposition~\ref{p:model free NA1 vs classical NA1} is actually a
consequence of Proposition~\ref{prop:local martingale}, because if $S$
satisfies (NA1) under $\P$, then there exists a dominating measure $\Q
\gg\P$, such that $S$ is a $\Q$-local martingale. See \cite{Ruf2013}
for the case of continuous $S$, and~\cite{Imkeller2011} for the general case.
\end{rmk}

The crucial point is that (NA1) is \emph{the} essential property which
every sensible market model has to satisfy, whereas (NA) is nice to
have but not strictly necessary. Indeed, (NA1) is equivalent to the
existence of an unbounded utility function such that the maximum
expected utility is finite \cite{Karatzas2007,Imkeller2011}. (NA) is
what is needed in addition to (NA1) in order to obtain equivalent local
martingale measures \cite{Delbaen1994}. But there are perfectly viable
models which violate (NA), for example, the three dimensional Bessel
process \cite{Delbaen1995,Karatzas2007}. By working with the arbitrage
definition of typical price paths, we would in a certain sense ignore
these models.

\subsection{Relation to Vovk's outer measure}\label{ss:relation to vovk}

Our definition of the outer measure $\overline{P}$ is not exactly the
same as Vovk's~\cite{Vovk2012}. We find our definition more intuitive
and it also seems to be easier to work with. However, since we rely on
some of the results established by Vovk, let us compare the two notions.

For $\lambda> 0$, Vovk defines the set of processes
\[
\mathcal{S}_\lambda:= \Biggl\{\sum_{k=0}^\infty
H^k: H^k \in\mathcal {H}_{\lambda_k},
\lambda_k > 0, \sum_{k=0}^\infty
\lambda_k = \lambda \Biggr\}.
\]
For every $G = \sum_{k\ge0} H^k \in\mathcal{S}_{\lambda}$, every
$\omega\in\Omega$ and every $t \in[0,T]$, the integral
\[
(G\cdot S)_t(\omega) := \sum_{k \ge0}
\bigl(H^k\cdot S\bigr)_t(\omega) = \sum
_{k
\ge0} \bigl(\lambda_k + \bigl(H^k
\cdot S\bigr)_t(\omega)\bigr) - \lambda
\]
is well defined and takes values in $[-\lambda, \infty]$. Vovk then
defines for $A \subseteq\Omega$ the cheapest superhedging price as
\[
\overline{Q}(A) := \inf \bigl\{\lambda> 0: \exists G \in\mathcal
{S}_\lambda\mbox{ s.t. } \lambda+ (G\cdot S)_T \ge
\1_A \bigr\}.
\]
This definition corresponds to the usual construction of an outer
measure from an outer content (i.e., an outer measure which is only
finitely subadditive and not countably subadditive); see~\cite
{Folland1999}, Chapter~1.4, or~\cite{Tao2011}, Chapter~1.7. Here, the
outer content is given by the cheapest superhedging price using only
simple strategies. It is easy to see that $\overline{P}$ is dominated
by $\overline{Q}$.

\begin{lem}\label{l:vovk content versus ours}
Let $A \subseteq\Omega$. Then $\overline{P}(A) \le\overline{Q}(A)$.
\end{lem}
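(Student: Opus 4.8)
The plan is to show that any superhedging strategy available in Vovk's construction can be turned into a sequence of superhedging strategies of the type used in the definition of $\oP$, at no extra cost. So fix $A \subseteq \Omega$ and take $\lambda > \oQ(A)$; then there exists $G = \sum_{k \ge 0} H^k \in \mathcal{S}_\lambda$ with $H^k \in \Hc_{\lambda_k}$, $\sum_k \lambda_k = \lambda$, such that $\lambda + (G \cdot S)_T(\omega) \ge \1_A(\omega)$ for all $\omega$. The natural candidate sequence is the partial sums $G^n := \sum_{k=0}^n H^k$. First I would check admissibility: since $(H^k \cdot S)_t(\omega) \ge -\lambda_k$ for all $t, \omega$, we get $(G^n \cdot S)_t(\omega) \ge -\sum_{k=0}^n \lambda_k \ge -\lambda$, so each $G^n$ is a $\lambda$-admissible simple strategy, i.e. $G^n \in \Hc_\lambda$. (A minor point worth mentioning: $G^n$ is a finite sum of simple strategies, hence still a simple strategy, because one can merge the finitely many families of stopping times into a single increasing sequence.)

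Next I would handle the convergence. By definition $(G \cdot S)_t(\omega) = \sum_{k \ge 0} (H^k \cdot S)_t(\omega)$, and the key structural fact is that $(H^k \cdot S)_t(\omega) + \lambda_k \ge 0$, so the series $\sum_k \big((H^k \cdot S)_T(\omega) + \lambda_k\big)$ has nonnegative terms and its partial sums increase monotonically to $(G \cdot S)_T(\omega) + \lambda \in [0,\infty]$. Hence
\[
  \liminf_{n \to \infty} \big(\lambda + (G^n \cdot S)_T(\omega)\big) = \lim_{n \to \infty} \sum_{k=0}^n \big((H^k \cdot S)_T(\omega) + \lambda_k\big) + \lim_{n\to\infty}\Big(\lambda - \sum_{k=0}^n \lambda_k\Big) = \lambda + (G \cdot S)_T(\omega) \ge \1_A(\omega).
\]
Wait — one has to be slightly careful because $\lambda - \sum_{k=0}^n \lambda_k \to 0$ and the monotone sum may be $+\infty$; but the convention $\infty + 0 = \infty$ (or rather the fact that the tail $\sum_{k>n}\lambda_k \to 0$) makes this rigorous: $\lambda + (G^n\cdot S)_T = \sum_{k=0}^n((H^k\cdot S)_T + \lambda_k) + \sum_{k>n}\lambda_k$, and both pieces behave well in the limit. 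So the sequence $(G^n)_{n} \subseteq \Hc_\lambda$ satisfies $\liminf_n (\lambda + (G^n \cdot S)_T) \ge \1_A$ pointwise, which by definition of $\oP$ gives $\oP(A) \le \lambda$. Letting $\lambda \downarrow \oQ(A)$ yields $\oP(A) \le \oQ(A)$.

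I do not expect a genuine obstacle here; the statement is essentially a bookkeeping exercise. The one spot requiring a little care is the interchange of limits and the use of the extended-real arithmetic conventions (particularly when $(G \cdot S)_T(\omega) = +\infty$), which is why the monotonicity of the partial sums $\sum_{k \le n}((H^k\cdot S)_T + \lambda_k)$ — a consequence of $\lambda_k$-admissibility of each $H^k$ — is the crucial ingredient that makes everything go through cleanly. The other point to state explicitly, if one wants to be thorough, is that a finite sum of simple strategies is again a simple strategy, so that $G^n$ is an admissible competitor in the infimum defining $\oP$.
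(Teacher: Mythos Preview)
Your proof is correct and follows exactly the same approach as the paper: take the partial sums $G^n = \sum_{k=0}^n H^k$, observe they lie in $\mathcal H_\lambda$, and use monotonicity of $\sum_{k\le n}((H^k\cdot S)_T+\lambda_k)$ to pass to the limit. The paper's proof is just a terser version of what you wrote, omitting the remarks on extended-real arithmetic and on finite sums of simple strategies being simple.
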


\begin{pf}
Let $G = \sum_k H^k$, with $H^k \in\mathcal{H}_{\lambda_k}$ and $\sum_k \lambda_k = \lambda$, and assume that $\lambda+ (G\cdot S)_T \ge\1_A$.
Then $(\sum_{k=0}^n H^k)_{n\in\N}$ defines a sequence of simple
strategies in $\mathcal{H}_\lambda$, such that
\[
\liminf_{n\rightarrow\infty} \Biggl(\lambda+ \Biggl( \Biggl(\sum
_{k=0}^n H^k \Biggr) \cdot S
\Biggr)_T \Biggr) = \lambda+ (G\cdot S)_T \ge
\1_A.
\]
So if $\overline{Q}(A) < \lambda$, then also $\overline{P}(A) \le
\lambda$, and therefore $\overline{P}(A) \le\overline{Q}(A)$.
\end{pf}

\begin{cor}\label{cor:typical price paths have finite p-variation}
For every $p > 2$, the set $A_p := \{\omega\in\Omega: \Vert S(\omega
) \Vert_{p\mbox{-}\mathrm{var}} = \infty\}$ has outer measure zero,
that is $\overline{P}(A_p) = 0$.
\end{cor}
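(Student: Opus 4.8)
The plan is to show that $\overline{P}(A_p)=0$ by exhibiting, for arbitrary initial capital $\varepsilon>0$, a sequence of $\varepsilon$-admissible simple strategies which superhedge $\1_{A_p}$. By Lemma~\ref{l:vovk content versus ours} it suffices to do this in Vovk's framework, i.e. to bound $\overline{Q}(A_p)$, which is in fact the way Vovk~\cite{Vovk2012} proves the corresponding statement for his outer measure. Alternatively, and perhaps more cleanly, one can prove the statement directly for $\overline{P}$ and invoke countable subadditivity (Lemma~\ref{lem:outer}): since $A_p \subseteq \bigcup_{p'>p,\,p'\in\Q} A_{p'}$ is not quite what one wants, instead observe that it is enough to cover $A_p$ by a \emph{single} strategy of arbitrarily small cost, so I will aim for that.

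The key mechanism is the classical fact that a continuous path of finite quadratic variation — indeed any path that is not ``too rough'' — has finite $p$-variation for $p>2$, and conversely infinite $p$-variation forces the path to oscillate in a way that a buy-low/sell-high strategy can exploit. Concretely, the first step is to fix $p>2$, pick $q$ with $2<q<p$, and for each level $n$ consider the stopping times obtained by waiting for the path to move by a fixed increment $\delta>0$: set $\sigma_0=0$ and $\sigma_{k+1} = \inf\{t>\sigma_k : |S_t - S_{\sigma_k}| \ge \delta\}$. On $A_p$ the path has infinite $p$-variation, hence (after a standard comparison of $p$-variation along such stopping-time partitions with the number of $\delta$-oscillations) the number of completed increments $N_\delta(\omega) = \#\{k : \sigma_k(\omega) < \infty\}$ must satisfy $\delta^p N_\delta \not\to 0$, in fact $N_\delta$ grows fast enough along a sequence $\delta=\delta_m\to0$ that $\delta_m^2 N_{\delta_m}\to\infty$ on $A_p$. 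The second step is to realize each such ``count the oscillations'' quantity as (a controlled transform of) the gain of a simple trading strategy: one builds a strategy whose wealth at time $T$ is comparable to $\sum_{k}\big( |S_{\sigma_{k+1}} - S_{\sigma_k}|^2 - (\text{something telescoping})\big)$, using the elementary identity $|S_{\sigma_{k+1}}-S_{\sigma_k}|^2 = \text{(increment of }|S|^2) - 2 S_{\sigma_k}(S_{\sigma_{k+1}}-S_{\sigma_k})$, the latter being exactly a simple stochastic integral against $S$. This shows the discrete quadratic variation along the $\delta_m$-partition is, up to a bounded martingale-type term, the gain of an admissible strategy; controlling the martingale term (which by Vovk's pathwise Hoeffding inequality, Appendix~\ref{a:hoeffding}, or by an $L^2$/Doob-type pathwise estimate, is of order $\delta_m \sqrt{N_{\delta_m}}$, hence negligible compared to $\delta_m^2 N_{\delta_m}$) yields strategies whose terminal wealth tends to $+\infty$ precisely on $A_p$, while never losing more than a small fixed amount. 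Normalizing the initial capital to be summable in $m$ and adding the strategies gives a single $\varepsilon$-admissible (super)strategy covering $\1_{A_p}$.

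The main obstacle is the bookkeeping that converts the purely pathwise statement ``$\|S(\omega)\|_{p\text{-}\mathrm{var}}=\infty$'' into a quantitative lower bound on $\sum_k |S_{\sigma_{k+1}}-S_{\sigma_k}|^q$ or on $N_{\delta_m}$, and then controlling the error (the telescoping/martingale part) uniformly over $\omega$ rather than almost surely — this is where one genuinely needs the pathwise Hoeffding-type inequality rather than Doob's inequality, since there is no probability measure. The self-financing/admissibility constraint (wealth $\ge -\varepsilon$ for \emph{all} $\omega$) also requires care: one typically stops the strategy once the loss reaches the allotted budget, as in the proof of Remark~\ref{r:vovk take supremum in content}, and checks that on $A_p$ this stopping does not occur for the relevant strategies. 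Since Vovk already carried this out for $\overline{Q}$, the honest version of this proof is simply: the stated cover exists in $\mathcal{S}_\lambda$ by~\cite{Vovk2012}, hence $\overline{Q}(A_p)=0$, hence $\overline{P}(A_p)\le \overline{Q}(A_p)=0$ by Lemma~\ref{l:vovk content versus ours}.
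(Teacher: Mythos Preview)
Your final ``honest version'' is exactly the paper's proof: invoke Vovk's result that $\overline{Q}(A_p)=0$ and then apply Lemma~\ref{l:vovk content versus ours} to conclude $\overline{P}(A_p)=0$. The only quibble is bibliographic --- the relevant result is Theorem~1 of~\cite{Vovk2008}, not~\cite{Vovk2012} --- and the lengthy sketch of a direct construction preceding your conclusion is unnecessary for the corollary as stated.
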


\begin{pf}
Theorem~1 of Vovk~\cite{Vovk2008} states that $\overline{Q}(A_p) = 0$,
so $\overline{P}(A_p) = 0$ by Lemma~\ref{l:vovk content versus ours}.
\end{pf}

It is a remarkable result of \cite{Vovk2012} that if $\Omega=
C([0,\infty), \R)$ (i.e., if the asset price process is
one-dimensional), and if $A \subseteq\Omega$
is ``invariant under time changes'' and such that $S_0(\omega) = 0$ for
all $\omega\in A$, then $A \in\F$ and $\overline{Q}(A) = \P(A)$,
where $\P$ denotes the Wiener measure.
This can be interpreted as a pathwise Dambis Dubins--Schwarz theorem.

\subsection{A topology on path-dependent functionals}\label{sec:topology}

It will be very useful to introduce a topology on functionals on $\Omega
$. For that purpose let us identify $X,Y\colon\Omega\to\R$ if $X=Y$
for typical price paths. Clearly this defines an equivalence relation,
and we write $\overline L_0$ for the space of equivalence classes. We
then introduce the analog of convergence in probability in our context:
$(X_n)$ \emph{converges in outer measure} to $X$ if
\[
\lim_{n \to\infty} \overline P\bigl(|X_n - X| > \varepsilon\bigr) =
0 \qquad\mbox{for all } \varepsilon> 0.
\]
We follow~\cite{Vovk2012} in defining an expectation operator. If
$X\colon\Omega\to[0,\infty]$, then
\[
\overline E[X] := \inf \Bigl\{\lambda> 0: \exists \bigl(H^n
\bigr)_{n\in\N} \subseteq\mathcal{H}_\lambda\mbox{ s.t. } \liminf
_{n\rightarrow\infty
} \bigl(\lambda+ \bigl(H^n\cdot S
\bigr)_T(\omega)\bigr) \ge X(\omega)\ \forall\omega\in \Omega \Bigr
\}.
\]
In particular, $\overline P(A) = \overline E[\1_A]$. The expectation
$\overline E$ is countably subadditive, monotone, and positively
homogeneous. It is an easy exercise to verify that
\[
d(X,Y) := \overline E\bigl[|X-Y| \wedge1\bigr]
\]
defines a metric on $\overline L_0$.

\begin{lem}\label{lem:complete metric}
The distance $d$ metrizes the convergence in outer measure. More
precisely, a sequence $(X_n)$ converges to $X$ in outer measure if and
only if $\lim_n d(X_n,X)=0$. Moreover, $(\overline L_0,d)$ is a
complete metric space.
\end{lem}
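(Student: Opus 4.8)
The plan is to follow the classical blueprint for "convergence in probability is metrized by $\E[|X-Y|\wedge 1]$'', but being careful to replace expectations by the sublinear operator $\overline E$ and probabilities by the outer measure $\overline P$. The argument splits into two independent parts: (i) the metric $d$ metrizes convergence in outer measure, and (ii) the resulting metric space is complete.

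For part (i), the key is a two-sided comparison between $d(X,Y)=\overline E[|X-Y|\wedge 1]$ and the quantities $\overline P(|X-Y|>\varepsilon)$. In one direction, for $\varepsilon\in(0,1)$ one has the pointwise bound $|X-Y|\wedge 1 \ge \varepsilon\,\1_{\{|X-Y|>\varepsilon\}}$, so by monotonicity and positive homogeneity of $\overline E$ (both stated in the excerpt) we get $d(X,Y)\ge \varepsilon\,\overline P(|X-Y|>\varepsilon)$; hence $d(X_n,X)\to 0$ forces $\overline P(|X_n-X|>\varepsilon)\to 0$ for every $\varepsilon$. In the other direction, for any $\varepsilon\in(0,1)$ split $|X-Y|\wedge 1 \le \varepsilon + \1_{\{|X-Y|>\varepsilon\}}$ pointwise; applying $\overline E$, using subadditivity and $\overline E[\varepsilon]=\varepsilon$ (take the zero strategy), yields $d(X,Y)\le \varepsilon + \overline P(|X-Y|>\varepsilon)$. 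So if $\overline P(|X_n-X|>\varepsilon)\to 0$ for every $\varepsilon$, then $\limsup_n d(X_n,X)\le \varepsilon$ for all $\varepsilon$, whence $d(X_n,X)\to 0$. I would also remark that $d$ is genuinely a metric on $\overline L_0$: symmetry is clear, $d(X,Y)=0$ iff $\overline P(|X-Y|>\varepsilon)=0$ for all $\varepsilon$ iff $X=Y$ for typical price paths (using countable subadditivity of $\overline P$ over $\varepsilon=1/k$), and the triangle inequality follows from $|X-Z|\wedge 1\le |X-Y|\wedge 1 + |Y-Z|\wedge 1$ together with subadditivity of $\overline E$.

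For part (ii), completeness, take a $d$-Cauchy sequence $(X_n)$. Pass to a subsequence $(X_{n_k})$ with $d(X_{n_k},X_{n_{k+1}})\le 2^{-k}$, so by the bound from part (i), $\overline P(|X_{n_k}-X_{n_{k+1}}|>2^{-k/2})\le 2^{-k/2}$ (or any summable choice). By countable subadditivity of $\overline P$ (Lemma~\ref{lem:outer}), the set where $|X_{n_k}-X_{n_{k+1}}|>2^{-k/2}$ happens for infinitely many $k$ has outer measure zero; hence for typical price paths the series $\sum_k (X_{n_{k+1}}-X_{n_k})$ converges absolutely, and we may define $X:=\lim_k X_{n_k}$ for typical price paths (and arbitrarily, say $0$, on the null set), giving a well-defined element of $\overline L_0$. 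Then one shows $d(X_{n_k},X)\to 0$: on the good set, $|X_{n_k}-X|\le \sum_{j\ge k}|X_{n_{j+1}}-X_{n_j}|$, and applying $\overline E$ with countable subadditivity gives $d(X_{n_k},X)\le \sum_{j\ge k} d(X_{n_j},X_{n_{j+1}})\le 2^{-k+1}\to 0$. Finally, a standard Cauchy-plus-convergent-subsequence argument (via the triangle inequality) upgrades this to $d(X_n,X)\to 0$ for the full sequence.

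The main obstacle — and the only place where this differs from the textbook argument — is making sure that every manipulation of $\overline E$ that I want is actually licensed by the stated properties (monotonicity, countable subadditivity, positive homogeneity), since $\overline E$ is \emph{not} linear and in particular one cannot subtract inside it. The delicate point is the passage to the limit $X:=\lim_k X_{n_k}$ and the estimate $\overline E\big[\,|X_{n_k}-X|\wedge 1\,\big]\le \sum_{j\ge k} \overline E\big[\,|X_{n_{j+1}}-X_{n_j}|\wedge 1\,\big]$: this requires that $\overline E$ handle the countable sum $\sum_{j\ge k}|X_{n_{j+1}}-X_{n_j}|\wedge 1$ dominating $|X_{n_k}-X|\wedge 1$ on a set of full outer measure, which is exactly countable subadditivity applied to the superhedging strategies, combined with the fact (immediate from the definitions) that $\overline E[Y]$ only depends on the equivalence class of $Y$ and that modifying $Y$ on a null set does not change $\overline E[Y]$. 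Once these bookkeeping facts about $\overline E$ are in place, everything else is routine.
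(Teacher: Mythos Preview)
Your proposal is correct and follows essentially the same classical approach as the paper: the two-sided comparison $\varepsilon\,\overline P(|X_n-X|\ge\varepsilon)\le d(X_n,X)\le \varepsilon+\overline P(|X_n-X|>\varepsilon)$ for metrization, and a fast subsequence with summable $d$-distances plus countable subadditivity for completeness. The only cosmetic difference is that the paper applies countable subadditivity of $\overline E$ directly to $\sum_k(|X_{n_k}-X_{n_{k+1}}|\wedge 1)$ to obtain a.s.\ convergence, whereas you route through a Borel--Cantelli argument on the events $\{|X_{n_k}-X_{n_{k+1}}|>2^{-k/2}\}$; both are standard and equivalent here.
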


\begin{pf}
The arguments are the same as in the classical setting. Using
subadditivity and monotonicity of the expectation operator, we have
\[
\varepsilon\overline P\bigl(|X_n-X|\ge\varepsilon\bigr) \le\overline
E\bigl[|X_n-X|\wedge1\bigr] \le\overline P\bigl(|X_n - X| > \varepsilon\bigr)
+ \varepsilon
\]
for all $\varepsilon\in(0,1]$, showing that convergence in outer
measure is equivalent to convergence with respect to $d$.

As for completeness, let $(X_n)$ be a Cauchy sequence with respect to
$d$. Then there exists a subsequence $(X_{n_k})$ such that
$d(X_{n_k},X_{n_{k+1}}) \le2^{-k}$ for all $k$, so that
\[
\overline E \biggl[\sum_k\bigl (|X_{n_k} -
X_{n_{k+1}}| \wedge1\bigr) \biggr] \le\sum_k
\overline E\bigl[|X_{n_k} - X_{n_{k+1}}| \wedge1\bigr] = \sum
_k d(X_{n_k},X_{n_{k+1}}) < \infty,
\]
which means that $(X_{n_k})$ converges for typical price paths. Define
$X:=\liminf_k X_{n_k}$. Then we have for all $n$ and $k$
\[
d(X_n,X) \le d(X_n, X_{n_k}) +
d(X_{n_k},X) \le d(X_n,X_{n_k}) + \sum
_{\ell\ge k} d(X_{n_\ell},X_{n_{\ell+1}}) \le
d(X_n,X_{n_k}) + 2^{-k}.
\]
Choosing $n$ and $k$ large, we see that $d(X_n, X)$ tends to 0.
\end{pf}

\section{Model free It\^o integration}\label{sec:cpi}

The present section is devoted to the construction of a model free It\^
o integral. The main ingredient is a (weak) type of model free It\^o
isometry, which allows us to estimate the integral against a step
function in terms of the amplitude of the step function and the
quadratic variation of the price path. Using the topology introduced in
Section~\ref{sec:topology}, it is then easy to extend the integral to
c\`adl\`ag integrands by a continuity argument.

Since we are in an unusual setting, let us spell out the following
standard definitions.

\begin{definition}
A process $F \colon\Omega\times[0,T] \to\mathbb{R}^d $ is called
\emph{adapted} if the random variable $ \omega\mapsto F_{t}(\omega)$
is $\mathcal{F}_{t}$-measurable for all $t \in[0,T]$.

The process $F$ is said to be \emph{c\`adl\`ag} if the sample path $t
\mapsto F_t(\omega)$ is c\`adl\`ag for all $\omega\in\Omega$.
\end{definition}

To prove our weak It\^o isometry, we will need an appropriate sequence
of stopping times:
Let $n \in\mathbb{N}$. For each $i=1,\ldots,d$ define inductively
\[
\sigma^{n,i}_0 := 0,\qquad \sigma^{n,i}_{k+1}
:= \inf \bigl\{ t \geq \sigma^{n,i}_{k} :
\bigl|S^i_t -S^i_{\sigma^{n,i}_{k}}\bigr|
\ge2^{-n} \bigr\},\qquad k \in\N.
\]
Since we are working with continuous paths and we are considering
entrance times into closed sets, the maps $(\sigma^{n,i})$ are indeed
stopping times, despite the fact that $(\F_t)$ is neither complete nor
right-continuous. Denote $\pi^{n,i} := \{ \sigma^{n,i}_k : k \in\mathbb
{N} \}$. To obtain an increasing sequence of partitions, we take the
union of the $(\pi^{n,i})$, that is we define $\sigma^n_0 := 0$ and then
%
\begin{equation}
\label{eq:dyadic stopping times} \sigma^n_{k+1}(\omega):= \min \Biggl\{ t >
\sigma^n_k(\omega) : t \in \bigcup
_{i=1}^d \pi^{n,i}(\omega) \Biggr\},\qquad k \in
\N,
\end{equation}
and we write $\pi^n := \{\sigma^n_k: k \in\N\}$ for the corresponding
partition.

\begin{lem}[(\cite{Vovk2011}, Theorem~4.1)]
For typical price paths $\omega\in\Omega$, the quadratic variation
along $(\pi^{n,i}(\omega))_{n \in\mathbb{N}}$ exists. That is,
\[
V^{n,i}_t(\omega) := \sum_{k=0}^{\infty}
\bigl(S^i_{\sigma^{n,i}_{k+1}
\wedge t}(\omega)-S^i_{\sigma^{n,i}_{k} \wedge t}(
\omega) \bigr)^2, \qquad t \in[0,T], n \in\mathbb{N},
\]
converges uniformly to a function $\langle S^i \rangle(\omega) \in
C([0,T],\mathbb{R})$ for all $i \in\{1,\ldots,d\}$.
\end{lem}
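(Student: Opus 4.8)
The plan is to show that, for typical $\omega$, the continuous functions $V^{n,i}_\cdot(\omega)$ form a uniformly Cauchy sequence on $[0,T]$; its uniform limit $\langle S^i\rangle(\omega)$ is then automatically an element of $C([0,T],\R)$. Since a finite union of null sets is null (Lemma~\ref{lem:outer}), it suffices to fix $i$, which we suppress from the notation, writing $S=S^i$, $\sigma^n_k=\sigma^{n,i}_k$, $\pi^n=\pi^{n,i}$ and $V^n=V^{n,i}$. The starting point is an exact comparison of two consecutive levels. One first checks that the partitions are nested, $\pi^n\subseteq\pi^{n+1}$: inside a ``level-$n$ block'' $[\sigma^n_k,\sigma^n_{k+1}]$ the reference point of the level-$(n+1)$ stopping times can only take the values $S_{\sigma^n_k}$ and $S_{\sigma^n_k}\pm 2^{-n-1}$ before the block closes, and the block closes exactly at one of these stopping times. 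Consequently $[\sigma^n_k,\sigma^n_{k+1}]$ is subdivided by points $\sigma^n_k=\rho_0<\dots<\rho_{L_k+1}=\sigma^n_{k+1}$ of $\pi^{n+1}$ with $|S_{\rho_{j+1}}-S_{\rho_j}|=2^{-n-1}$, $|S_{\rho_j}-S_{\sigma^n_k}|\le 2^{-n-1}$ and $L_k+1\ge 2$. Expanding each square $(S_{\sigma^n_{k+1}\wedge t}-S_{\sigma^n_k\wedge t})^2$ occurring in $V^n_t$ by the elementary identity $(\sum_j a_j)^2=\sum_j a_j^2+2\sum_j(\sum_{\ell<j}a_\ell)a_j$ and summing over $k$, the pure squares recombine into $V^{n+1}_t$, and one obtains the exact relation
\[
  V^{n+1}_t-V^n_t = -2\,(H^n\cdot S)_t,\qquad t\in[0,T],
\]
where $H^n_u:=S^{[n+1]}_u-S^{[n]}_u$ and $S^{[m]}_u:=S_{\max(\pi^m\cap[0,u])}$. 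By the above, $|H^n_u|\le 2^{-n-1}$, and $H^n$ changes value only at the finitely many points of $\pi^{n+1}$ in $[0,T]$, so it is a simple strategy.

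The difficulty is that $(H^n\cdot S)_t=\tfrac12(V^n_t-V^{n+1}_t)$ is not bounded below by a universal constant, so $H^n$ is not admissible. We localise it: for $j\in\N$ let $\tau^j_n:=\inf\{t:V^{n+1}_t\ge j\}$, which is a stopping time because $t\mapsto V^{n+1}_t$ is continuous and adapted, and set $H^{n,j}:=H^n\1_{[0,\tau^j_n)}$. Using the continuity of $V^{n+1}$, together with the fact that every complete level-$n$ block contains at least two complete level-$(n+1)$ blocks, one gets $V^{n+1}_{t\wedge\tau^j_n}\le j$ and $V^n_{t\wedge\tau^j_n}\le 2j+2^{-2n}$ for all $t$, so that both $H^{n,j}$ and $-H^{n,j}$ belong to $\mathcal H_{2j+1}$. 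Moreover $(H^{n,j}\cdot S)$ is a sum of $\lesssim j2^{2n}$ increments, each of modulus at most $2^{-n-1}\cdot 2^{-n-1}=2^{-2n-2}$, so the sum of their squares is $\lesssim j2^{-2n}$, and the pathwise Hoeffding inequality (Appendix~\ref{a:hoeffding}), applied to $\pm H^{n,j}$, yields a universal constant $C>0$ such that for every $x>0$
\[
  \overline{P}\Big(\sup_{t\le\tau^j_n}|V^{n+1}_t-V^n_t|\ge x\Big)=\overline{P}\Big(2\sup_{t\in[0,T]}|(H^{n,j}\cdot S)_t|\ge x\Big)\le 2\exp\Big(-\frac{x^2 2^{2n}}{Cj}\Big).
\]
Taking $x=\delta_n:=2^{-n/2}$ makes the right-hand side summable in $n$ for each fixed $j$.

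By monotonicity and countable subadditivity of $\overline{P}$ (Lemma~\ref{lem:outer}), the Borel--Cantelli argument then shows that for each $j$ there is a null set off which $\sup_{t\le\tau^j_n}|V^{n+1}_t-V^n_t|\le\delta_n$ for all large $n$, hence $\sum_n\sup_{t\le\tau^j_n}|V^{n+1}_t-V^n_t|<\infty$; taking the union over $j\in\N$ still discards only a null set. On its complement, \emph{provided} $V^n_T(\omega)\le j$ for all $n$ and some $j\in\N$, one has $\tau^{j+1}_n(\omega)=\infty$ for every $n$, so that $\sup_{t\le T}|V^{n+1}_t(\omega)-V^n_t(\omega)|$ is summable in $n$ and $(V^n(\omega))_n$ is uniformly Cauchy on $[0,T]$ --- the desired conclusion. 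It thus remains to show that $\sup_nV^n_T(\omega)<\infty$ for typical $\omega$. This is obtained by the same circle of ideas: starting from the identity $V^n_T=(S_T-S_0)^2-2(G^n\cdot S)_T$ with $G^n_u:=S^{[n]}_u-S_0$, one superhedges the event that $S$ performs at least $\sim j2^{2n}$ oscillations of size $2^{-n}$ on $[0,T]$ by a suitably scaled and stopped version of $G^n$ and again invokes the pathwise Hoeffding inequality, obtaining $\sum_n\overline{P}(V^n_T\ge j_n)<\infty$ for an appropriate sequence $j_n\to\infty$, whence $\sup_nV^n_T<\infty$ for typical $\omega$.

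I expect the main obstacle to be exactly this last point. The strategy $H^n$ comparing successive quadratic variations, and the strategy $G^n$ bounding the total quadratic variation, only become admissible after being stopped at random times which are themselves built from the very quantities one is trying to control; closing this loop --- in particular producing an a priori bound on $V^n_T$ that is uniform in $n$ --- is where the work lies, and it is also why the exponential estimates of Appendix~\ref{a:hoeffding} are unavoidable. The elementary pathwise bound that follows from Corollary~\ref{cor:typical price paths have finite p-variation} gives only $V^n_T\lesssim 2^{n(p-2)}\lVert S\rVert_{p\text{-}\var}^p$, which diverges as $n\to\infty$; the finiteness and the convergence of the quadratic variation genuinely depend on the martingale-type cancellation encoded in Hoeffding's inequality.
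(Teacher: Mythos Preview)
The paper does not prove this lemma; it is stated with the citation \cite{Vovk2011}, Theorem~4.1, and no argument is given. So there is no ``paper's proof'' to compare against. Your sketch follows the same architecture as Vovk's original argument: the nestedness $\pi^n\subseteq\pi^{n+1}$, the exact identity $V^{n+1}_t-V^n_t=-2(H^n\cdot S)_t$ with $\|H^n\|_\infty\le 2^{-n-1}$, the localisation to make $H^n$ admissible, the pathwise Hoeffding bound, and Borel--Cantelli. All of this is correct and well organised.

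There is, however, a genuine gap in the last step. From $\sum_n\overline P(V^n_T\ge j_n)<\infty$ with $j_n\to\infty$ one obtains, for typical $\omega$, $V^n_T(\omega)<j_n$ for all large $n$; this does \emph{not} yield $\sup_n V^n_T(\omega)<\infty$, which is what you then invoke to pick a fixed level $j$ and conclude $\tau^{j+1}_n=\infty$ for every $n$. The fix is to stop asking for a uniform bound on $V^n_T$ and instead feed the polynomial growth directly into the Cauchy estimate. First localise by $\rho^R:=\inf\{t:|S_t-S_0|=R\}$; the identity $V^n_t=(S_t-S_0)^2-2(G^n\cdot S)_t$ shows that the stopped strategy $-G^n\1_{[0,\rho^R)}$ lies in $\mathcal H_{R^2/2}$, and by a direct superhedging (Markov--type) argument---no Hoeffding needed here---one gets $\overline P(\{\rho^R>T\}\cap\{V^n_T\ge c\})\le R^2/c$ uniformly in $n$. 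Taking $c=c_n:=(n+1)^2$ makes this summable, so on $\{\rho^R>T\}$ typically $V^{n+1}_T<(n+1)^2$ for all large $n$, i.e.\ $\tau^{(n+1)^2}_n>T$. Now rerun your Hoeffding estimate with $j=(n+1)^2$ in place of a fixed $j$: the bound becomes $2\exp(-\delta_n^2\,2^{2n}/(C(n+1)^2))$, still summable for $\delta_n=2^{-n/2}$. Borel--Cantelli then gives $\sup_{t\le T}|V^{n+1}_t-V^n_t|\le\delta_n$ eventually on $\{\rho^R>T\}$, and since $\bigcup_{R\in\N}\{\rho^R>T\}=\Omega$ you are done. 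Note also that the Hoeffding route you propose for $G^n$ is problematic because $|G^n_u|\le\|S-S_0\|_\infty$ is not small; the admissibility argument above bypasses this.
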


For later reference, let us estimate $N^n_t := \max\{k \in\N: \sigma
^n_k \leq t \mbox{ and }\sigma^n_k \neq0\}$, the number of stopping
times $\sigma^n_k \neq0$ in $\pi^n$ with values in $[0,t]$:

\begin{lem}\label{lem:number}
For all $\omega\in\Omega$, $n \in\mathbb{N}$, and $t \in[0,T]$, we have
\[
2^{-2n} N_t^n(\omega) \leq \sum
_{i =1 }^d V^{n,i}_t(\omega) =:
V^n_t(\omega).
\]
\end{lem}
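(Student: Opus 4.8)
The plan is to establish the inequality separately on each coordinate and then sum. Fix $\omega \in \Omega$, $n \in \N$, and $t \in [0,T]$. The key observation is that the partition $\pi^n$ is the union of the coordinate partitions $\pi^{n,1}, \dots, \pi^{n,d}$, so every point $\sigma^n_k \in \pi^n$ with $\sigma^n_k \le t$ belongs to at least one $\pi^{n,i}$; write $N^{n,i}_t$ for the number of points of $\pi^{n,i} \setminus\{0\}$ lying in $(0,t]$. Then $N^n_t \le \sum_{i=1}^d N^{n,i}_t$, since each point of $\pi^n$ is counted on the right at least once. (This step absorbs the possibility of a point belonging to several $\pi^{n,i}$, which only helps the inequality.)

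Next I would control each $N^{n,i}_t$ by the coordinate quadratic variation sum $V^{n,i}_t(\omega)$. By definition of $\sigma^{n,i}_{k+1}$ as the entrance time into $\{|S^i - S^i_{\sigma^{n,i}_k}| \ge 2^{-n}\}$, for every index $k$ with $\sigma^{n,i}_{k+1} \le t$ we have, by continuity of $S^i$, that $|S^i_{\sigma^{n,i}_{k+1}} - S^i_{\sigma^{n,i}_k}| = 2^{-n}$, hence
\[
  \big(S^i_{\sigma^{n,i}_{k+1} \wedge t}(\omega) - S^i_{\sigma^{n,i}_k \wedge t}(\omega)\big)^2 = 2^{-2n}.
\]
There are exactly $N^{n,i}_t$ such indices $k$ (namely $k = 0, 1, \dots, N^{n,i}_t - 1$), and the remaining terms in the series defining $V^{n,i}_t(\omega)$ are nonnegative, so
\[
  V^{n,i}_t(\omega) \ge 2^{-2n} N^{n,i}_t.
\]
Summing over $i = 1, \dots, d$ and using the bound from the first step gives
\[
  2^{-2n} N^n_t(\omega) \le \sum_{i=1}^d 2^{-2n} N^{n,i}_t \le \sum_{i=1}^d V^{n,i}_t(\omega) = V^n_t(\omega),
\]
which is the claim.

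I do not expect a genuine obstacle here; the statement is essentially bookkeeping. The only point requiring a little care is the combinatorial relation $N^n_t \le \sum_i N^{n,i}_t$: one must make sure that the definition of $\sigma^n_{k+1}$ as the minimum over $\bigcup_i \pi^{n,i}$ of time points strictly beyond $\sigma^n_k$ really does produce a partition whose nonzero points, counted with multiplicity one, number at most the total count of nonzero points across the $\pi^{n,i}$. Since taking a union of finite sets can only decrease cardinality relative to the disjoint union, this is immediate. A secondary subtlety is that $V^{n,i}_t$ is defined as an infinite sum including a possibly nonzero "boundary" term $(S^i_{\sigma^{n,i}_{K+1}\wedge t} - S^i_{\sigma^{n,i}_K \wedge t})^2$ for the last index $K$ with $\sigma^{n,i}_K \le t < \sigma^{n,i}_{K+1}$; this term is simply discarded (it is $\ge 0$), so it does not affect the lower bound.
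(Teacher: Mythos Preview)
Your proof is correct and follows essentially the same approach as the paper: both introduce the coordinate counts $N^{n,i}_t$, use $N^n_t \le \sum_i N^{n,i}_t$, exploit continuity to get $|S^i_{\sigma^{n,i}_{k+1}} - S^i_{\sigma^{n,i}_k}| = 2^{-n}$ for each completed excursion, and then bound $N^{n,i}_t$ by $2^{2n} V^{n,i}_t$. Your additional remarks on the combinatorics and the nonnegative boundary term are accurate and match what the paper leaves implicit.
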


\begin{pf}
For $i \in\{1,\ldots, d\}$ define $N_t^{n,i}:=\max\{k \in\N: \sigma
^{n,i}_k \leq t \mbox{ and }\sigma^{n,i}_k \neq0\}$.
Since $S^i$ is continuous, we have $\vert S^i_{\sigma^{n,i}_{k+1}}-
S^i_{\sigma^{n,i}_{k}}\vert= 2^{-n}$ as long as $\sigma^{n,i}_{k+1} \le
T$. Therefore, we obtain
\begin{eqnarray*}
N^n_t(\omega) \le\sum_{i=1}^d
N_t^{n,i}(\omega)  = \sum_{i=1}^d
\sum_{k=0}^{N_t^{n,i}(\omega)-1} \frac{1}{2^{-2n}} \bigl(
S_{\sigma
^{n,i}_{k+1}}(\omega)- S_{\sigma^{n,i}_{k}}(\omega) \bigr)^2
\leq2^{2n} \sum_{i=1}^d
V^{n,i}_t(\omega).
\end{eqnarray*}
\upqed\end{pf}

We will start by constructing the integral against step functions,
which are defined similarly as simple strategies, except possibly
unbounded: A process $F\colon\Omega\times[0,T] \rightarrow\R^d$ is
called a \emph{step function} if there exist stopping times $0 = \tau_0
< \tau_1 < \cdots,$
and $\F_{\tau_n}$-measurable functions $F_n \colon\Omega\rightarrow\R
^d$, such that for every $\omega\in\Omega$ we have $\tau_n(\omega) =
\infty$ for all
but finitely many $n$, and such that
\[
F_t(\omega) = \sum_{n=0}^\infty
F_n(\omega) \1_{[\tau_n(\omega),\tau
_{n+1}(\omega))}(t).
\]
For notational convenience, we are now considering the interval $[\tau
_n(\omega),\tau_{n+1}(\omega))$ which is closed on the left-hand side.
This allows us define the integral
\[
(F\cdot S)_t := \sum_{n=0}^\infty
F_n S_{\tau_n \wedge t, \tau_{n+1}
\wedge t} = \sum_{n=0}^\infty
F_{\tau_n} S_{\tau_n \wedge t, \tau_{n+1}
\wedge t},\qquad t \in[0,T].
\]

The following lemma will be the main building block in the construction
of our integral.

\begin{lem}[(Model free version of It\^o's isometry)]\label{lem:model
free ito isometry}
Let $F$ be a step function. Then for all $a,b,c > 0$ we have
\[
\oP \bigl(\bigl\{\bigl\Vert(F\cdot S)\bigr\Vert_\infty\ge a b \sqrt{c}\bigr\} \cap
\bigl\{ \Vert F\Vert_\infty\le a \bigr\}\cap\bigl\{ \langle S\rangle_T
\le c \bigr\} \bigr) \le2\exp\bigl(-b^2/(2d)\bigr),
\]
where the set $\{ \langle S\rangle_T \le c \}$ should be read as $\{
\langle S \rangle_T = \lim_n V^n_T \mbox{ exists and satisfies }
\langle S \rangle_T \le c\}$.
\end{lem}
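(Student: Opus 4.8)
The plan is to reduce the statement to the pathwise Hoeffding inequality recalled in Appendix~\ref{a:hoeffding}, applied to a cleverly chosen simple strategy whose gains dominate the square of the integral $(F \cdot S)$. The starting point is the algebraic identity that turns a square into an iterated integral: if $F$ is a step function subordinate to the stopping times $0 = \tau_0 < \tau_1 < \dots$, then for each coordinate $j$,
\[
  \bigl((F^j \cdot S^j)_t\bigr)^2 = 2 \int_0^t (F^j \cdot S^j)_{s-}\, F^j_s \dd S^j_s + \sum_{n} (F^j_{\tau_n})^2 (S^j_{\tau_n \wedge t, \tau_{n+1}\wedge t})^2,
\]
which on the event $\{\lVert F \rVert_\infty \le a\}$ bounds the last sum by $a^2 \langle S^j \rangle_t \le a^2 c$ once we also intersect with $\{\langle S \rangle_T \le c\}$. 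The first term on the right is (twice) the gain process of the simple strategy $G^j := (F^j \cdot S^j)_{-} F^j$, suitably stopped so as to be admissible, and it is precisely the kind of ``martingale transform'' to which a pathwise Hoeffding bound applies: the increments of $(F^j \cdot S^j)_{-} F^j$ over $[\tau_n, \tau_{n+1}]$ are $\F_{\tau_n}$-measurable and bounded (on the relevant event) in terms of $a$, $b$, $c$.

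Concretely I would fix the event $E := \{\lVert F \rVert_\infty \le a\} \cap \{\langle S \rangle_T \le c\}$ and work coordinate by coordinate, localizing with stopping times of the form $\rho := \inf\{t : |(F^j \cdot S^j)_t| \ge a b_0 \sqrt c\}$ for a threshold $b_0$ to be optimized (and truncating $F$ where $\lVert F\rVert_\infty > a$ or $V^n_T$ exceeds $c$, which forces the stopped strategy into some $\mathcal H_\lambda$). On $E$, before time $\rho$ the integrand $(F^j \cdot S^j)_{-} F^j$ is bounded by $a^2 b_0 \sqrt c$, so the increments of its gain process over the $\tau$-intervals are controlled by this times the increments $S^j_{\tau_n,\tau_{n+1}}$; squaring and summing these squared increments over $[0,T]$ gives at most $(a^2 b_0 \sqrt c)^2 \langle S^j\rangle_T \le a^4 b_0^2 c^2$. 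This is exactly the ``quadratic variation of the bet'' quantity that enters Vovk's pathwise Hoeffding inequality (Theorem~\ref{thm:hoeffding} / Appendix~\ref{a:hoeffding}), and it yields: with $\oP$-outer measure at most $2 \exp(-\beta^2/2)$, one has $|2\int_0^{T} (F^j\cdot S^j)_{s-} F^j_s \dd S^j_s| \le \beta \cdot 2 a^2 b_0 c$ on $E$. Feeding this back into the square identity, on $E$ and off this exceptional set we get $((F^j\cdot S^j)_t)^2 \le 2\beta a^2 b_0 c + a^2 c$ for all $t \le \rho$; choosing $\beta$ and $b_0$ of the same order (roughly $\beta \simeq b_0 \simeq b$), the right-hand side is $\lesssim a^2 b^2 c$, which forces $\rho > T$, i.e. $\lVert F^j \cdot S^j\rVert_\infty < a b \sqrt c$. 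Summing the $d$ exceptional sets (one per coordinate) via subadditivity of $\oP$ and absorbing $d$ into the exponent gives the bound $2\exp(-b^2/(2d))$ after renaming constants.

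The main obstacle I anticipate is bookkeeping rather than conceptual: making the localization rigorous in this non-probabilistic setting. One must (i) check that the stopped, truncated strategy $G^j = (F^j\cdot S^j)_{-} F^j \1_{[0,\rho)} \1_{\{\text{truncation}\}}$ is genuinely a \emph{simple} $\lambda$-admissible strategy — finitely many stopping times per path, $\F_{\tau_n}$-measurable bounded bets — which is where continuity of $S$ and the structure of the $\tau_n$ are used; (ii) track how the constants $a, b, c$ propagate through Hoeffding so that the final exponent is exactly $-b^2/(2d)$ and not merely $-cb^2/d$ (this forces a careful choice of the split between the threshold $b_0$ and the Hoeffding parameter $\beta$, presumably $b_0 = b/(2\cdot\text{something})$ so the two contributions $2\beta a^2 b_0 c$ and $a^2 c$ both sit below $a^2 b^2 c$); and (iii) handle the definitional subtlety in $\{\langle S\rangle_T \le c\}$ — that $\langle S\rangle_T$ is defined as the limit of $V^n_T$ along the specific partitions $\pi^{n,i}$ — which means the truncation should really be phrased in terms of $V^n_T \le c$ for each $n$ and then passed to the limit, or one argues directly on the set where the limit exists and is $\le c$. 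None of these is deep, but getting the admissibility and the constant exactly right is the crux.
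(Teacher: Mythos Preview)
Your approach is genuinely different from the paper's and, as written, has a real gap. The paper does \emph{not} square $(F\cdot S)$ and bootstrap through an iterated integral; instead it applies the pathwise Hoeffding inequality \emph{directly} to $(F\cdot S)$, after refining the $(\tau_n)$-partition with the dyadic stopping times $\pi^n$ of $S$. Concretely, one sets $\rho^n_0:=0$ and $\rho^n_{k+1}:=\min\{t>\rho^n_k:t\in\pi^n\cup\{\tau_m\}\}$, stops at $\tau_a:=\inf\{t:|F_t|\ge a\}$, and observes that each increment satisfies $|F_{\rho^n_k}S_{\tau_a\wedge\rho^n_k\wedge t,\tau_a\wedge\rho^n_{k+1}\wedge t}|\le a\sqrt d\,2^{-n}$ uniformly in $t$. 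Hoeffding (Lemma~\ref{l:hoeffding}) then yields, for each $\lambda$, a $1$-admissible strategy whose capital dominates $\exp(\lambda(F\cdot S)_{\tau_a\wedge t}-\tfrac{\lambda^2}{2}(N^{(\rho^n)}_t+1)2^{-2n}a^2d)$; the count $N^{(\rho^n)}_t$ is controlled via Lemma~\ref{lem:number} by $2^{2n}V^n_t+N^{(\tau)}_t$, and letting $n\to\infty$ replaces $V^n_T$ by $\langle S\rangle_T\le c$. Averaging $\pm\lambda$ and optimizing $\lambda=b/(a\sqrt c\,d)$ gives exactly $2\exp(-b^2/(2d))$.

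The gap in your route is that Hoeffding in this framework requires \emph{uniform-in-$t$} control of each bet: $\sup_t|h_n S_{\tau_n\wedge t,\tau_{n+1}\wedge t}|\le b_n$ with $\sum b_n^2$ bounded. On the $(\tau_n)$-partition alone you have no handle on the oscillation of $S$ inside $[\tau_n,\tau_{n+1}]$, so neither your ``increments of the gain process'' bound nor your quadratic-variation bound $\sum_n (F^j_{\tau_n})^2(S^j_{\tau_n,\tau_{n+1}})^2\le a^2\langle S^j\rangle_T$ is available (recall $\langle S^j\rangle$ is defined along the $\pi^{n,j}$, not along $(\tau_n)$). To repair this you would have to insert the $\pi^n$ refinement anyway---at which point the squaring and the stopping time $\rho$ become unnecessary detours, and applying Hoeffding directly to $(F\cdot S)$ on the refined partition is both shorter and delivers the exact constant without any coordinate-wise union bound.
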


\begin{pf}
Assume $F_t = \sum_{n=0}^\infty F_n \1_{[\tau_n,\tau_{n+1})}(t)$ and
set $\tau_a := \inf\{ t > 0 :  |F_t|\geq a \}$. Let $n \in\N$ and
define $\rho^n_0 := 0$ and then for $k \in\N$
\[
\rho^n_{k+1} := \min \bigl\{ t > \rho^n_k:
t \in\pi^n \cup\{\tau_m : m \in\N\} \bigr\},
\]
where we recall that $\pi^n =\{\sigma^n_k: k \in\N\}$ is the $n$th
generation of the dyadic partition generated by $S$.
For $t \in[0,T]$, we have $(F\cdot S)_{\tau_a\wedge t} = \sum_{k}
F_{\rho^n_k} S_{\tau_a \wedge\rho^n_k\wedge t, \tau_a \wedge\rho
^n_{k+1} \wedge t}$, and by the definition of $\pi^n(\omega)$ and $\tau
_a$ we get
\[
\sup_{t\in[0,T]} \bigl| F_{\rho^n_k} S_{\tau_a \wedge\rho^n_k\wedge t,
\tau_a \wedge\rho^n_{k+1} \wedge t} \bigr| \le a
\sqrt{d} 2^{-n}.
\]
Hence, the pathwise Hoeffding inequality, Lemma~\ref{l:hoeffding} in
Appendix~\ref{a:hoeffding}, yields for every $\lambda\in\R$ the
existence of a 1-admissible simple strategy $H^{\lambda,n} \in\Hc_{1}$
such that
\[
1 + \bigl(H^{\lambda,n} \cdot S\bigr)_t \ge\exp \biggl(
\lambda(F\cdot S)_{\tau_a
\wedge t} - \frac{\lambda^2}{2} \bigl(N^{(\rho^n)}_t+1
\bigr) 2^{-2n} a^2 d \biggr) =: \mathcal{E}^{\lambda,n}_{\tau_a \wedge t}
\]
for all $t \in[0,T]$, where
\[
N^{(\rho^n)}_t := \max\bigl\{k : \rho^n_k
\le t\bigr\} \le N^n_t + N^{(\tau)}_t
:= N^n_t + \max\{k : \tau_k \le t\}.
\]
By Lemma~\ref{lem:number}, we have $N^n_t \le2^{2n} V^n_t$, so that
\[
\mathcal{E}^{\lambda,n}_{\tau_a \wedge t} \ge\exp \biggl( \lambda (F\cdot
S)_t - \frac{\lambda^2}{2} V^n_T
a^2 d - \frac{\lambda^2}{2} \bigl( N^{(\tau)}_T +
1\bigr) 2^{-2n} a^2 d \biggr).
\]
If now $\Vert(F\cdot S)\Vert_\infty\ge a b \sqrt{c}$, $\Vert
F(\omega)\Vert_\infty\le a$ and $\langle S \rangle_T \le c$, then
\[
\liminf_{n \rightarrow\infty} \sup_{t \in[0,T]} \frac{\mathcal
{E}^{\lambda,n}_t + \mathcal{E}^{-\lambda,n}_t}{2}
\ge\frac{1}{2} \exp \biggl( \lambda a b \sqrt{c} - \frac{\lambda^2}{2} c
a^2 d \biggr).
\]
The argument inside the exponential is maximized for $\lambda= b/(a
\sqrt{c} d)$, in which case we obtain $1/2 \exp(b^2/(2d))$. The
statement now follows from Remark~\ref{r:vovk take supremum in content}.
\end{pf}

Of course, we did not actually establish an isometry but only an upper
bound for the integral. But this estimate is the key ingredient which
allows us to extend the model free It\^o integral to more general
integrands, and it is this analogy to the classical setting that the
terminology ``model free version of It\^o's isometry'' alludes to.

Let us extend the topology of Section~\ref{sec:topology} to processes:
we identify $X,Y\colon\Omega\times[0,T]\to\R^m$ if for typical
price paths we have $X_t = Y_t$ for all $t \in[0,T]$, and we write
$\overline L_0([0,T],\R^m)$ for the resulting space of equivalence
classes which we equip with the distance
\[
d_\infty(X,Y) := \overline E\bigl[\|X-Y\|_\infty\wedge1\bigr].
\]

Ideally, we would like the stochastic integral on step functions to be
continuous with respect to $d_\infty$. However, using Proposition~\ref
{prop:local martingale} it is easy to see that $\overline P(\|
((1/n)\cdot S)\|_\infty> \varepsilon) = 1$ for all $n \in\N$ and
$\varepsilon>0$. This is why we also introduce for $c>0$ the pseudometric
\[
d_c(X,Y) := \overline E\bigl[\bigl(\|X-Y\|_\infty\wedge1\bigr)
\1_{\langle S \rangle
_T \le c}\bigr] \le d_\infty(X,Y),
\]
and then
\[
d_{\mathrm{loc}}(X,Y) := \sum_{n=1}^\infty2^{-n}
d_{2^n}(X,Y) \le d_\infty(X,Y).
\]
The distance $d_{\mathrm{loc}}$ is somewhat analogous to the distance
used to metrize the topology of uniform convergence on compacts, except
that we do not localize in time but instead we control the size of the
quadratic variation.
For step functions $F$ and $G$, we get from Lemma~\ref{lem:model free
ito isometry}
\begin{eqnarray*}
d_c\bigl((F\cdot S), (G \cdot S)\bigr) & \le&\overline P \bigl(
\bigl\{\bigl\| \bigl((F - G)\cdot S\bigr)\bigr\|_\infty\ge a b \sqrt {c}\bigr\} \cap
\bigl\{\|F - G\|_\infty\le a\bigr\} \cap\bigl\{\langle S \rangle_T \le
c\bigr\} \bigr)
\\
&&{} + \frac{d_c(F,G)}{a} + a b \sqrt{c}
\\
& \le&2 \exp \biggl(-\frac{b^2}{2d} \biggr) + \frac{d_c(F,G)}{a} + a b \sqrt{c}
\end{eqnarray*}
whenever $a,b>0$. Setting $a := \sqrt{d_c(F,G)}$ and $b: = \sqrt{d
|\log a|}$, we deduce that
%
\begin{equation}
\label{eq:ito continuity} d_c\bigl((F\cdot S), (G \cdot S)\bigr) \lesssim(1 +
\sqrt c) d_c(F,G)^{1/2-\varepsilon}
\end{equation}
for all $\varepsilon>0$, and in particular
\[
d_{\mathrm{loc}}\bigl((F\cdot S), (G \cdot S)\bigr) \lesssim\sum
_{n=1}^\infty 2^{-n/2} d_{2^n}(F,G)^{1/2-\varepsilon}
\lesssim d_\infty (F,G)^{1/2-\varepsilon}.
\]

\begin{thmm}\label{thmm:int}
Let $F$ be an adapted, c\`adl\`ag process with values in $\mathbb
{R}^d$. Then there exists $\int F \,\dd S \in\overline L_0([0,T],\R)$
such that for every sequence of step functions $(F^n)$ satisfying $\lim_n d_\infty(F^n, F) = 0$ we have $\lim_n d_{\mathrm{loc}}((F^n \cdot
S), \int F \,\dd S) = 0$. The integral process $\int F \,\dd S$ is
continuous for typical price paths, and there exists a representative
$\int F \,\dd S$ which is adapted, although it may take the values $\pm
\infty$.
We usually write $\int_0^t F_s \,\dd S_s := \int F \,\dd S(t)$, and we
call $\int F \,\dd S$ the \emph{model free It\^o integral of $F$ with
respect to $S$}.

The map $F \mapsto\int F \,\dd S$ is linear, satisfies
\[
d_{\mathrm{loc}} \biggl( \int F \,\dd S, \int G \,\dd S \biggr) \lesssim
d_\infty (F,G)^{1/2-\varepsilon}
\]
for all $\varepsilon> 0$, and the model free version of It\^o's
isometry extends to this setting:
\[
\overline P \biggl( \biggl\{ \biggl\|\int F \,\dd S\biggr\|_\infty\ge a b \sqrt c
\biggr\} \cap\bigl\{\|F \|_\infty\le a \bigr\} \cap\bigl\{\langle S
\rangle_T \le c \bigr\} \biggr) \le2 \exp\bigl(-b^2/(2d)
\bigr)
\]
for all $a,b,c>0$.
\end{thmm}

\begin{pf}
Everything follows in a straightforward way from~\eqref{eq:ito
continuity} in combination with Lemma~\ref{lem:complete metric}. We
have to use the fact that $F$ is adapted and c\`adl\`ag in order to
approximate it uniformly by step functions.
\end{pf}

Another simple consequence of our model free version of It\^o's
isometry is a strengthened version of Karandikar's \cite
{Karandikar1995} pathwise It\^o integral which works for all typical
price paths and not just quasi surely under the local martingale measures.

\begin{cor}\label{cor:pw int}
In the setting of Theorem~\ref{thmm:int}, let $(F^m)_{m \in\N}$ be a
sequence of step functions with $\Vert F^m(\omega) - F(\omega)\Vert
_\infty\le c_m $ for all $\omega\in\Omega$ and all $m \in\N$. Then
for typical price paths $\omega$ there exists a constant $C(\omega) >
0$ such that
%
\begin{equation}
\label{eq:ito integral rate} \biggl\Vert\bigl(F^m \cdot S\bigr) (\omega) - \int F \,\dd S(
\omega)\biggr \Vert _\infty\le C(\omega) c_m \sqrt{\log m}
\end{equation}
for all $m \in\mathbb{N}$. So, if $c_m = o((\log m)^{-1/2})$, then for
typical price paths $(F^m \cdot S)$ converges to $\int F \,\dd S$.
\end{cor}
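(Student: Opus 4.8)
The plan is to exploit the quantitative, non-asymptotic nature of the model free It\^o isometry from Lemma~\ref{lem:model free ito isometry} (as extended in Theorem~\ref{thm:int}) together with a Borel--Cantelli-type argument at the level of the outer measure. Fix $c>0$ and work on the event $\{\langle S\rangle_T\le c\}$. For each $m$ the difference $F^m-F$ is a step function (more precisely, $F^m$ is a step function and $F$ is c\`adl\`ag adapted, so Theorem~\ref{thm:int} applies to $(F^m\cdot S)-\int F\dd S$ via approximation) bounded in supremum norm by $c_m$. Applying the extended It\^o isometry with $a:=c_m$ and $b:=b_m$ to be chosen gives
\[
  \overline P\Big(\big\{\|(F^m\cdot S)-\textstyle\int F\dd S\|_\infty\ge c_m b_m\sqrt c\big\}\cap\{\langle S\rangle_T\le c\}\Big)\le 2\exp\!\big(-b_m^2/(2d)\big).
\]
Choosing $b_m:=K\sqrt{\log m}$ for a constant $K>0$ with $K^2/(2d)>1$ (say $K^2 = 4d$) makes the right-hand side summable in $m$. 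By countable subadditivity of $\overline P$ (Lemma~\ref{lem:outer}), the set
\[
  A_{c}:=\bigcup_{M\in\N}\bigcap_{m\ge M}\big\{\|(F^m\cdot S)-\textstyle\int F\dd S\|_\infty< c_m K\sqrt c\sqrt{\log m}\big\}\cap\{\langle S\rangle_T\le c\}
\]
has full outer measure inside $\{\langle S\rangle_T\le c\}$; equivalently the complementary event where infinitely many of the bounds fail is a $\overline P$-null set relative to $\{\langle S\rangle_T\le c\}$.

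Next I would remove the dependence on the fixed level $c$. Since typical price paths have finite quadratic variation along $(\pi^n)$ (by the quoted result of Vovk~\cite{Vovk2011}, Theorem~4.1, combined with Corollary~\ref{cor:typical price paths have finite p-variation}), for a typical path $\omega$ there is some $c=c(\omega)<\infty$ with $\langle S\rangle_T(\omega)\le c$; taking a countable exhausting sequence $c=2^n$ and using countable subadditivity of $\overline P$ once more shows that for typical $\omega$ there is $M(\omega)$ and $c(\omega)$ such that
\[
  \big\|(F^m\cdot S)(\omega)-\textstyle\int F\dd S(\omega)\big\|_\infty\le c_m K\sqrt{c(\omega)}\sqrt{\log m}\qquad\text{for all }m\ge M(\omega).
\]
Finally one absorbs the finitely many indices $m<M(\omega)$ into the constant: since each $\|(F^m\cdot S)(\omega)-\int F\dd S(\omega)\|_\infty$ is finite for typical $\omega$ (the integral is a.e.\ finite-valued, being continuous for typical price paths) and $c_m\sqrt{\log m}>0$, one may enlarge $C(\omega):=\max\{K\sqrt{c(\omega)},\ \max_{m<M(\omega)}\|(F^m\cdot S)(\omega)-\int F\dd S(\omega)\|_\infty/(c_m\sqrt{\log m})\}$ to obtain~\eqref{eq:ito integral rate} for all $m\in\N$. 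The last assertion is immediate: if $c_m=o((\log m)^{-1/2})$ then $c_m\sqrt{\log m}\to 0$, so the right-hand side of~\eqref{eq:ito integral rate} tends to $0$.

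I expect the only genuine subtlety to be the bookkeeping around the set $\{\langle S\rangle_T\le c\}$ and the interplay between "typical price paths" and outer-measure-null sets: one must be careful that the isometry in Theorem~\ref{thm:int} is stated with the full intersection including $\{\langle S\rangle_T\le c\}$ and $\{\|F^m-F\|_\infty\le c_m\}$, so the estimate only controls the bad event on that slice, and the reduction to typical paths must be done by a countable union over $c\in\{2^n\}$ rather than a single $c$. Everything else is a routine Borel--Cantelli argument transported to the sub-additive outer measure $\overline P$, for which Lemma~\ref{lem:outer} provides exactly the countable subadditivity needed.
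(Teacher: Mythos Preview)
Your proposal is correct and follows essentially the same route as the paper: apply the model free It\^o isometry with $a=c_m$ and $b$ a constant multiple of $\sqrt{\log m}$ (the paper takes exactly $b=\sqrt{4d\log m}$, i.e.\ your $K^2=4d$) to obtain a bound of order $1/m^2$, then invoke Borel--Cantelli via the countable subadditivity of $\overline P$. The paper compresses all of the bookkeeping you spell out (the restriction to $\{\langle S\rangle_T\le c\}$, the countable exhaustion in $c$, and the absorption of finitely many indices) into a single sentence, but your more careful treatment of these points is accurate and matches what is implicitly needed.
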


\begin{pf}
For $c>0$ the model free It\^o isometry gives
\[
\oP \biggl( \biggl\{\biggl\Vert\bigl(F^m\cdot S\bigr) - \int F \,\dd S
\biggr\Vert_\infty\ge c_m \sqrt{4d\log m} \sqrt{c} \biggr\} \cap
\bigl\{\langle S \rangle_T \le c\bigr\} \biggr) \le
\frac{1}{m^2}.
\]
Since this is summable in $m$, the claim follows from Borel Cantelli
(which only requires countable subadditivity and can thus be applied
for the outer measure $\oP$).
\end{pf}

\begin{rmk}
The speed of convergence~\eqref{eq:ito integral rate} is better than
the one that can be obtained using the arguments in~\cite
{Karandikar1995}, where the summability of $(c_m)$ is needed.
\end{rmk}

\begin{rmk}
It would be desirable to extend the robust It\^o integral obtained in
Theorem~\ref{thmm:int} to general locally square integrable integrands,
that is adapted processes $H$ with measurable trajectories and such
that $\int_0^t H^2_s(\omega) \,\dd\langle S \rangle_s (\omega) < \infty$
for all $t$ and for all $\omega$ which have a continuous quadratic
variation $\langle S \rangle(\omega)$ up to time $t$. The reason why
our methods break down in this setting is that our ``model free version
of It\^o's isometry'' requires as input a uniform bound on the
integrand. However, even with the restriction to c\`adl\`ag integrands
our robust It\^o integral is suitable for all (financial) applications
which use Karandikar's pathwise stochastic integral~\cite
{Karandikar1995}, with the great advantage of being a ``model free''
and not just a ``quasi sure'' object.

Similarly, it would be nice to have an extension of Theorem~\ref
{thmm:int} to c\`adl\`ag integrators. Unfortunately, neither the outer
measure $\overline{P}$ nor Vovk's outer measure $\overline{Q}$ have an
obvious reasonable extension to the space $D([0,T],\mathbb{R}^d)$ of
all c\`adl\`ag functions. The problem is that on this space there are
no non-zero admissible strategies. As initiated in \cite{Vovk2011}, it
is possible to consider $\overline{P}$ or $\overline{Q}$ on the
subspace of all paths in $D([0,T],\mathbb{R}^d)$ whose jump size at
time $t>0$ is bounded by a function of their supremum up to time $t$.
However, it would be necessary to develop new techniques to obtain
Theorem~\ref{thmm:int} in this setting since, for instance, the pathwise
Hoeffding inequality (Lemma~\ref{l:hoeffding}) would not be applicable anymore.
\end{rmk}

\section{Rough path integration for typical price paths}\label
{sec:rough paths}

Our second approach to model free stochastic integration is based on
the rough path integral, which has the advantage of being a continuous
linear operator between Banach spaces. The disadvantage is that we have
to restrict the set of integrands to those ``locally looking like
$S$'', modulo a smoother remainder. Our two main results in this
section are that every typical price path has a naturally associated
It\^o rough path, and that the rough path integral can be constructed
as limit of Riemann sums.

Let us start by recalling the basic definitions and results of rough
path theory.

\subsection{The Lyons--Gubinelli rough path integral}

Here we follow more or less the lecture notes~\cite{Friz2014}, to which
we refer for a gentle introduction to rough paths. More advanced
monographs are~\cite{Lyons2002,Lyons2007,Friz2010}. The main
difference to~\cite{Friz2014} in the derivation below is that we use
$p$-variation to describe the regularity, and not H\"older continuity,
because it is not true that all typical price paths are H\"older
continuous. Also, we make an effort to give reasonably sharp results,
whereas in~\cite{Friz2014} the focus lies more on the pedagogical
presentation of the material. We stress that in this subsection we are
merely collecting classical results.

\begin{definition}
A \emph{control function} is a continuous map $c\colon\Delta_T \to
[0,\infty)$ with $c(t,t) = 0$ for all $t \in[0,T]$ and such that
$c(s,u) + c(u,t) \le c(s,t)$ for all $0 \le s \le u \le t \le T$.
\end{definition}

Observe that if $f\colon[0,T] \too\R^d$ satisfies $|f_{s,t}|^p \le
c(s,t)$ for all $(s,t) \in\Delta_T$, then the $p$-variation of $f$ is
bounded from above by $c(0,T)^{1/p}$.

\begin{definition}
Let $p \in(2,3)$. A $p$-rough path is a map $\BS= (S, A) \colon
\Delta_T \to\R^d \times\R^{d\times d}$ such that \emph{Chen's relation}
\[
S^i(s,t) = S^i(s,u) + S^i(u,t) \quad\mbox{and}\quad
A^{i,j}(s,t) = A^{i,j}(s,u) + A^{i,j}(u,t) +
S^i(s,u) S^j(u,t)
\]
holds for all $1 \le i,j \le d$ and $0 \le s \le u \le t \le T$ and
such that there exists a control function $c$ with
\[
\bigl|S(s,t)\bigr|^p + \bigl|A(s,t)\bigr|^{p/2} \le c(s,t)
\]
(in other words $S$ has finite $p$-variation and $A$ has finite
$p/2$-variation). In that case, we call $A$ the \emph{area} of $S$.
\end{definition}

\begin{rmk}
Chen's relation simply states that $S$ is the increment of a function,
that is $S(s,t)= S(0,t)-S(0,s)=S_{s,t}$ for $S_t := S(0,t)$, and that
for all $i,j$ there exists a function $f^{i,j}\colon[0,T] \to\R$ such
that $A^{i,j}(s,t) = f^{i,j}(t) - f^{i,j}(s) - S^i_s S^j_{s,t}$.
Indeed, it suffices to set $f^{i,j}(t) := A^{i,j}(0,t) + S^i_0 S^j_{0,t}$.
\end{rmk}

\begin{rmk}
The (strictly speaking incorrect) name ``area'' stems from the fact
that if $S\colon[0,T] \to\mathbb{R}^2$ is a two-dimensional smooth
function and if
\[
A^{i,j}(s,t) = \int_s^t \int
_s^{r_2} \,\dd S^i_{r_1} \,\dd
S^j_{r_2} = \int_s^t
S^i_{s,r_2} \,\dd S^j_{r_2},
\]
then the antisymmetric part of $A(s,t)$ corresponds to the algebraic
area enclosed by the curve $(S_r)_{r \in[s,t]}$. It is a deep insight
of Lyons~\cite{Lyons1998}, proving a conjecture of F\"ollmer, that the
area is exactly the additional information which is needed to solve
differential equations driven by $S$ in a pathwise continuous manner,
and to construct stochastic integrals as continuous maps.
Actually,~\cite{Lyons1998} solves a much more general problem and
proves that if the driving signal is of finite $p$-variation for some
$p>1$, then it has to be equipped with the iterated integrals up to
order $\lfloor p \rfloor- 1$ to obtain a continuous integral map. The
for us relevant case $p \in(2,3)$ was already treated in~\cite{Lyons1995a}.
\end{rmk}

\begin{ex}
If $S$ is a continuous semimartingale and if we set $S(s,t) :=
S_{s,t}$ as well as
\[
A^{i,j}(s,t) := \int_s^t \int
_s^{r_2} \,\dd S^i_{r_1} \,\dd
S^j_{r_2} = \int_s^t
S^i_{s,r_2} \,\dd S^j_{r_2},
\]
where the integral can be understood either in the It\^o or in the
Stratonovich sense, then almost surely $\BS= (S,A)$ is a $p$-rough
path for all $p \in(2,3)$. This is shown in \cite{Coutin2005}, and we
will give a simplified model free proof below (indeed we will show that
every typical price path together with its model free It\^o integral is
a $p$-rough path for all $p \in(2,3)$, from where the statement about
continuous semimartingales easily follows).
\end{ex}

From now on, we fix $p \in(2,3)$ and we assume that $\BS$ is a
$p$-rough path. Gubinelli~\cite{Gubinelli2004} observed that for every
rough path there is a naturally associated Banach space of integrands,
the space of \emph{controlled paths}. Heuristically, a path $F$ is
controlled by $S$, if it locally ``looks like $S$'', modulo a smooth
remainder. The precise definition is the following.
%
\begin{definition}
Let $p \in(2,3)$ and $q>0$ be such that $2/p + 1/q > 1$. Let $\mathbb
{S} = (S,A)$ be a $p$-rough path and let $F\colon[0,T] \too\R^n$ and
$F'\colon[0,T] \too\R^{n\times d}$. We say that the pair $(F,F')$ is
\emph{controlled} by $S$ if the \emph{derivative} $F'$ has finite
$q$-variation, and the \emph{remainder} $R_F \colon\Delta_T \too\R
^n$, defined by
\[
R_F(s,t) := F_{s,t} - F'_s
S_{s,t},
\]
has finite $r$-variation for $1/r = 1/p + 1/q$. In this case, we write
$(F,F') \in\CC_\BS^{q} = \CC_\BS^{q}(\R^n)$, and define
\[
\bigl\|\bigl(F, F'\bigr)\bigr\|_{\CC_\BS^{q}} := \bigl\| F'
\bigr\|_{q\mbox{-}\operatorname{var}} + \| R_F \|_{r\mbox
{-}\operatorname{var}}.
\]
Equipped with the norm $|F_0| + |F'_0| + \|(F,F')\|_{\CC_\BS^{q}}$, the
space $\CC_\BS^{q}$ is a Banach space.
\end{definition}

Naturally, the function $F'$ should be interpreted as the derivative of
$F$ with respect to $S$. The reason for considering couples $(F,F')$
and not just functions $F$ is that the regularity requirement on the
remainder $R_F$ usually does not determine $F'$ uniquely for a given
path $F$. For example, if $F$ and $S$ both have finite $r$-variation
rather than just finite $p$-variation, then for every $F'$ of finite
$q$-variation we have $(F,F') \in\CC_\BS^q$.

Note that we do not require $F$ or $F'$ to be continuous. We will point
out in Remark~\ref{rmk:non-continuous integrands} below why this does
not pose any problem.

To gain a more ``quantitative'' feeling for the condition on $q$, let
us assume for the moment that we can choose $p>2$ arbitrarily close to
2 (which is the case in the example of a continuous semimartingale
rough path). Then $2/p + 1/q > 1$ as long as $q > 0$, so that the
derivative $F'$ may essentially be as irregular as we want. The
remainder $R_F$ has to be of finite $r$-variation for $1/r = 1/p +
1/q$, so in other words it should be of finite $r$-variation for some
$r <2$ and thus slightly more regular than the sample path of a
continuous local martingale.

\begin{ex}
Let $\varepsilon\in(0,1]$ be such that $(2+\varepsilon)/p>1$. Let
$\varphi\in C^{1+\varepsilon}_b$ and define $F_s := \varphi(S_s)$ and
$F'_s := \varphi'(S_s)$. Then $(F, F') \in\CC_\BS^{p/\varepsilon}$:
Clearly $F'$ has finite $p/\varepsilon$-variation. For the remainder,
we have
\[
\bigl|R_F(s,t)\bigr|^{p/(1+\varepsilon)} = \bigl| \varphi(S_t) -
\varphi(S_s) - \varphi '(S_s)
S_{s,t}\bigr|^{p/(1+\varepsilon)} \le \| \varphi\|_{C^{1+\varepsilon
}_b} c(s,t),
\]
where $c$ is a control function for $S$.
As the image of the continuous path $S$ is compact, it is not actually
necessary to assume that $\varphi$ is bounded. We may always consider a
$C^{1+\varepsilon}$ function $\psi$ of compact support, such that $\psi
$ agrees with $\varphi$ on the image of $S$.

This example shows that in general $R_F(s,t)$ is not a path increment
of the form $R_F(s,t) = G(t) - G(s)$ for some function $G$ defined on
$[0,T]$, but really a function of two variables.
\end{ex}

\begin{ex}
Let $G$ be a path of finite $r$-variation for some $r$ with $1/p + 1/r
> 1$. Setting $(F,F') = (G,0)$, we obtain a controlled path in $\CC
^q_\BS$, where $1/q = 1/r - 1/p$. In combination with Theorem~\ref
{thmm:rough path integral} below, this example shows in particular that
the controlled rough path integral extends the Young integral and the
Riemann--Stieltjes integral.
\end{ex}

The basic idea of rough path integration is that if we already know how
to define $\int S \,\dd S$, and if $F$ looks like $S$ on small scales,
then we should be able to define $\int F \,\dd S$ as well. The precise
result is given by the following theorem.

\begin{thmm}[(Theorem~4.9 in~\cite{Friz2014}, see also~\cite
{Gubinelli2004}, Theorem~1)]\label{thmm:rough path integral}
Let $p \in(2,3)$ and $q>0$ be such that $2/p + 1/q > 1$. Let $\mathbb
{S} = (S,A)$ be a $p$-rough path and let $(F,F') \in\CC_\BS^{q}$. Then
there exists a unique function $\int F \,\dd S \in C([0,T], \R^n)$ which satisfies
\[
\biggl| \int_s^t F_u \,\dd
S_u - F_s S_{s,t} - F'_s
A(s,t) \biggr| \lesssim\| S \|_{p\mbox{-}\var,[s,t]} \| R_F\|_{r\mbox{-}\var,[s,t]} +
\| A \| _{p/2\mbox{-}\var,[s,t]} \bigl\|F'\bigr\|_{q\mbox{-}\var,[s,t]}
\]
for all $(s,t) \in\Delta_T$. The integral is given as limit of the
compensated Riemann sums
%
\begin{equation}
\label{eq:compensated riemann sums} \int_0^t F_u
\,\dd S_u = \lim_{m \too\infty} \sum
_{[s_1,s_2] \in\pi^m} \bigl[ F_{s_1} S_{s_1,s_2} +
F'_{s_1} A(s_1,s_2) \bigr],
\end{equation}
where $(\pi^m)$ is any sequence of partitions of $[0,t]$ with mesh size
going to $0$.

The map $(F,F') \mapsto(G,G') := (\int F_u \,\dd S_u, F)$ is continuous
from $\CC_\BS^{q}$ to $\CC_\BS^{p}$ and satisfies
\[
\bigl\| \bigl(G,G'\bigr)\bigr\|_{\CC_\BS^{p}} \lesssim\|F
\|_{p\mbox{-}\var} + \bigl(\bigl\|F'\bigr\| _\infty+
\bigl\|F'\bigr\|_{q\mbox{-}\var}\bigr) \|A\|_{p/2\mbox{-}\var} + \|S\|
_{p\mbox{-}\var} \|R_F\|_{r\mbox{-}\var}.
\]
\end{thmm}

\begin{rmk}\label{rmk:non-continuous integrands}
To the best of our knowledge, there is no publication in which the
controlled path approach to rough paths is formulated using
$p$-variation regularity. The references on the subject all work with
H\"older continuity. But in the $p$-variation setting, all the proofs
work exactly as in the H\"older setting, and it is a simple exercise to
translate the proof of Theorem~4.9 in~\cite{Friz2014} (which is based
on Young's maximal inequality which we will encounter below) to obtain
Theorem~\ref{thmm:rough path integral}.

There is only one small pitfall: We did not require $F$ or $F'$ to be
continuous. The rough path integral for discontinuous functions is
somewhat tricky, see~\cite{Williams2001,Friz2012Levy}. But here we do
not run into any problems, because the integrand $\BS= (S,A)$ is
continuous. The construction based on Young's maximal inequality works
as long as integrand and integrator have no common discontinuities, see
the theorem on page~264 of~\cite{Young1936}.
\end{rmk}

If now $\varphi\in C^{1+\varepsilon}_b$ for some $\varepsilon> 0$,
then using a Taylor expansion one can show that there exist $p > 2$ and
$q>0$ with $2/p + 1/q > 0$, such that $(F,F') \mapsto(\varphi(F),
\varphi'(F) F')$ is a locally bounded map from $\CC_\BS^{p}$ to $\CC_\BS
^{q}$. Combining this with the fact that the rough path integral is a
bounded map from $\CC_\BS^{q}$ to $\CC_\BS^{p}$, it is not hard to
prove the \emph{existence} of solutions to the rough differential equation
%
\begin{equation}
\label{eq:rde} X_t = x_0+ \int_0^t
\varphi(X_s)\,\dd S_s,
\end{equation}
$t \in[0,T]$, where $X \in\CC_\BS^{p}$, $\int\varphi(X_s) \,\dd S_s$
denotes the rough path integral, and $S$ is a typical price path.
Similarly, if $\varphi\in C^{2+\varepsilon}_b$, then the map $(F,F')
\mapsto(\varphi(F), \varphi'(F) F')$ is locally Lipschitz continuous
from $\CC_\BS^{p}$ to $\CC_\BS^{q}$, and this yields the \emph
{uniqueness} of the solution to~\eqref{eq:rde} -- at least among the
functions in the Banach space $\CC_\BS^{p}$. See Section~5.3 of~\cite
{Gubinelli2004} for details.

A remark is in order about the stringent regularity requirements on
$\varphi$. In the classical It\^o theory of SDEs, the function $\varphi
$ is only required to be Lipschitz continuous. But to solve a
Stratonovich SDE, we need better regularity of $\varphi$. This is
natural, because the Stratonovich SDE can be rewritten as an It\^o SDE
with a Stratonovich correction term: the equations
\begin{eqnarray*}
\dd X_t& =& \varphi(X_t) \circ\,\dd W_t\quad
\mbox{and}
\\
\dd X_t &=& \varphi(X_t) \,\dd W_t +
\tfrac{1}{2} \varphi'(X_t) \varphi
(X_t) \,\dd t
\end{eqnarray*}
are equivalent (where $W$ is a standard Brownian motion, $\dd W_t$
denotes It\^o integration, and $\circ\,\dd W_t$ denotes Stratonovich
integration). To solve the second equation, we need $\varphi' \varphi$
to be Lipschitz continuous, which is always satisfied if $\varphi\in
C^2_b$. But rough path theory cannot distinguish between It\^o and
Stratonovich integrals: If we define the area of $W$ using It\^o
(resp., Stratonovich) integration, then the rough path solution
of the equation will coincide with the It\^o (resp.,
Stratonovich) solution. So in the rough path setting, the function
$\varphi$ should satisfy at least the same conditions as in the
Stratonovich setting. The regularity requirements on $\varphi$ are
essentially sharp, see~\cite{Davie2007}, but the boundedness assumption
can be relaxed, see~\cite{Lejay2012}. See also Section~10.5 of \cite
{Friz2010} for a slight relaxation of the regularity requirements in
the Brownian case.

Of course, the most interesting result of rough path theory is that the
solution to a rough differential equation depends continuously on the
driving signal. This is a consequence of the following observation.

\begin{prop}[(Proposition~9.1 of~\cite{Friz2014})]\label{prop:continuous
rough path integral}
Let $p \in(2,3)$ and $q>0$ with $2/p + 1/q > 0$. Let $\BS= (S,A)$
and $\tilde{\BS} = (\tilde{S}, \tilde{A})$ be two $p$-rough paths, let
$(F, F') \in\CC_\BS^{q}$ and $(\tilde{F}, \tilde{F}') \in\CC_{\tilde
{\BS}}^{q}$. 
Then for every $M > 0$ there exists $C_M > 0$ such that
\begin{eqnarray*}
&&\biggl\| \int_0^\cdot F_s \,\dd
S_s - \int_0^\cdot
\tilde{F}_s \,\dd\tilde {S}_s \biggr\|_{p\mbox{-}\var}\\
&&\quad \le
C_M \bigl( |F_0 - \tilde{F}_0| +
\bigl|F'_0 - \tilde{F}'_0\bigr| +
\bigl\|F' - \tilde{F}'\bigr \|_{q\mbox{-}\var}
 \\
 &&\qquad{}+ \| R_F - R_{\tilde{F}}\|_{r\mbox{-}\var} + \| S -
\tilde{S}\|_{p\mbox{-}\var} + \| A - \tilde{A}\|_{p/2\mbox{-}\var} \bigr),
\end{eqnarray*}
as long as
\begin{eqnarray*}
&&\max\bigl\{ \bigl|F'_0\bigr| + \bigl\|\bigl(F,F'
\bigr)\bigr\|_{\CC_\BS^{q}}, \bigl|\tilde{F}'_0\bigr| +\bigl \|\bigl(
\tilde {F},\tilde{F}'\bigr)\bigr\|_{\CC_{\tilde{\BS}}^{q}}, \|S
\|_{p\mbox{-}\var}, \|A\| _{p/2\mbox{-}\var}, \|\tilde{S}\|_{p\mbox{-}\var}, \|
\tilde{A}\| _{p/2\mbox{-}\var}\bigr\}\\
&&\quad \le M.
\end{eqnarray*}
\end{prop}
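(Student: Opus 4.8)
The plan is to reduce the estimate to the \emph{linearity} of the sewing map underlying Theorem~\ref{thm:rough path integral}, followed by some careful bookkeeping of products of control functions. Write $\Xi(s,t) := F_s S_{s,t} + F'_s A(s,t)$ and $\tilde\Xi(s,t) := \tilde F_s \tilde S_{s,t} + \tilde F'_s \tilde A(s,t)$ for the two germs whose sewing produces the respective integrals, and let $\mathcal I$ denote the sewing map, so that $\int_0^\cdot F_s \dd S_s = \mathcal I\Xi$ and $\int_0^\cdot \tilde F_s \dd\tilde S_s = \mathcal I\tilde\Xi$. By linearity of $\mathcal I$ we then have $\int_0^\cdot F_s\dd S_s - \int_0^\cdot \tilde F_s \dd\tilde S_s = \mathcal I(\Xi - \tilde\Xi)$, and I would bound the $p$-variation of this path by splitting
\[
  \mathcal I(\Xi - \tilde\Xi)(s,t) = (\Xi - \tilde\Xi)(s,t) + \big[\mathcal I(\Xi - \tilde\Xi) - (\Xi - \tilde\Xi)\big](s,t)
\]
into a ``germ'' part and a ``sewing remainder'', and estimating each.

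For the germ part, telescoping gives
\[
  (\Xi - \tilde\Xi)(s,t) = (F_s - \tilde F_s) S_{s,t} + \tilde F_s(S_{s,t} - \tilde S_{s,t}) + (F'_s - \tilde F'_s) A(s,t) + \tilde F'_s(A(s,t) - \tilde A(s,t)).
\]
Here $\|F - \tilde F\|_\infty$ and $\|F' - \tilde F'\|_\infty$ are controlled by the right-hand side of the claimed inequality, using $\|g\|_\infty \le |g_0| + \|g\|_{q\text{-}\var}$, the controlled-path identity $F_{s,t} - \tilde F_{s,t} = (F'_s - \tilde F'_s) S_{s,t} + \tilde F'_s(S_{s,t} - \tilde S_{s,t}) + (R_F - R_{\tilde F})(s,t)$, and $r < p$; the remaining factors $S, A, \tilde F, \tilde F'$ have norms $\le C_M$ by hypothesis. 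Since $\|S\|_{p\text{-}\var,[\cdot]}^p$, $\|A\|_{p/2\text{-}\var,[\cdot]}^{p/2}$, $\|S - \tilde S\|_{p\text{-}\var,[\cdot]}^p$ and $\|A - \tilde A\|_{p/2\text{-}\var,[\cdot]}^{p/2}$ are control functions, summing $p$-th powers over a partition then gives $\|\Xi - \tilde\Xi\|_{p\text{-}\var}$ a bound of the asserted form.

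For the sewing remainder, Chen's relation yields the identity
\[
  \delta(\Xi - \tilde\Xi)(s,u,t) = (R_F - R_{\tilde F})(s,u) S_{u,t} + R_{\tilde F}(s,u)(S_{u,t} - \tilde S_{u,t}) + (F' - \tilde F')_{s,u} A(u,t) + \tilde F'_{s,u}(A(u,t) - \tilde A(u,t)),
\]
and each of the four terms is a product $\omega_1(s,u)^{\gamma_1}\omega_2(u,t)^{\gamma_2}$ of powers of control functions with $\gamma_1 + \gamma_2 = 2/p + 1/q > 1$, exactly one of $\omega_1,\omega_2$ being a ``difference'' control (a power of $\|R_F - R_{\tilde F}\|_{r\text{-}\var,[\cdot]}$, $\|S - \tilde S\|_{p\text{-}\var,[\cdot]}$, $\|F' - \tilde F'\|_{q\text{-}\var,[\cdot]}$ or $\|A - \tilde A\|_{p/2\text{-}\var,[\cdot]}$) and the other having total mass $\le C_M$. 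Running on the germ $\Xi - \tilde\Xi$ the same Young-type maximal inequality that proves Theorem~\ref{thm:rough path integral} — whose local bound is itself a sum of products of seminorms — one obtains, for every $(s,t)$, that the sewing remainder is dominated by a sum of four terms $C_M\,\omega^{\mathrm{diff}}(s,t)^{\gamma_1}\omega^{M}(s,t)^{\gamma_2}$. Taking $p$-variation of each such term and using the elementary inequality $\sum_k x_k^{\alpha} y_k^{\beta} \le (\sum_k x_k)^{\alpha}(\sum_k y_k)^{\beta}$ (valid for $\alpha,\beta \ge 0$ with $\alpha + \beta \ge 1$), with $\alpha + \beta = p(\gamma_1 + \gamma_2) > 1$, collapses the $M$-bounded control onto its value on $[0,T]$ (at most $C_M$) and leaves the difference control raised to the power that equals, on $[0,T]$, the $p$-th power of the corresponding difference quantity. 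Hence the sewing remainder has $p$-variation $\lesssim_M$ the sum of the difference quantities, and adding the germ part completes the proof.

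The step I expect to be the crux is this last one: keeping the dependence on the data differences genuinely \emph{linear}. In the H\"older framework the universal control $|t-s|$ makes the bound $|\delta(\Xi - \tilde\Xi)(s,u,t)| \lesssim_M (\text{difference})\,|t-s|^{\theta}$, $\theta > 1$, immediate; in the $p$-variation framework there is no such universal control, and crudely dominating $\delta(\Xi - \tilde\Xi)$ by $(\text{difference})\times\Omega(s,t)^{\theta}$ with a \emph{single} control $\Omega$ of total mass $\lesssim_M 1$ lets the $M$-bounded part of $\Omega$ survive in the final estimate and destroys linearity. The remedy is to keep the two controls in each product multiplicatively separate — the product form of Young's maximal inequality, which is exactly why the local estimate in Theorem~\ref{thm:rough path integral} is stated with products of seminorms rather than a single control. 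Beyond this, the argument is the direct $p$-variation transcription of Proposition~9.1 of~\cite{Friz2014}, and, as in Remark~\ref{rmk:non-continuous integrands}, the possible discontinuity of $F$ and $F'$ is harmless because the integrator $\BS$ is continuous.
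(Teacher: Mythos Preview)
The paper does not give its own proof of this proposition: it is stated as a citation of Proposition~9.1 in~\cite{Friz2014} and used as a black box, with Remark~\ref{rmk:non-continuous integrands} indicating that the H\"older arguments of~\cite{Friz2014} transfer to the $p$-variation setting without change. So there is nothing in the paper to compare your argument against.

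That said, your sketch is the standard proof and is correct in outline. Your computation of $\delta(\Xi-\tilde\Xi)$ is right (up to a global sign), the exponent check $1/r+1/p = 2/p+1/q>1$ and $1/q+2/p>1$ goes through, and you have correctly identified the one genuine subtlety in passing from the H\"older to the $p$-variation framework: the need to keep the two control functions in each product term multiplicatively separate throughout the Young/sewing estimate, so that the ``difference'' control appears to the first power in the final bound while only the $M$-bounded control is absorbed into $C_M$. This is exactly what the product-of-seminorms form of the local estimate in Theorem~\ref{thm:rough path integral} is designed for, and it is precisely the adaptation that Remark~\ref{rmk:non-continuous integrands} alludes to.
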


In other words, the rough path integral depends on integrand and
integrator in a locally Lipschitz continuous way, and therefore it is
no surprise that the solutions to differential equations driven by
rough paths depend continuously on the signal.

\subsection{Typical price paths as rough paths}

Our second approach to stochastic integration in model free financial
mathematics is based on the rough path integral. Here we show that for
every typical price path, the pair $(S, A)$ is a $p$-rough path for all
$p\in(2,3)$, where $A$ corresponds to the model free It\^o integral
$\int S \,\dd S$ which we constructed in Section~\ref{sec:cpi}. We also
show that many Riemann sum approximations to $\int S \,\dd S$ uniformly
satisfy a certain coarse grained regularity condition, which we will
use in the following section to prove that in our setting rough path
integrals can be calculated as limit of Riemann sums (and not
compensated Riemann sums as in Theorem~\ref{thmm:rough path integral}).
The main ingredient in the proofs will be our speed of
convergence~\eqref{eq:ito integral rate}.

\begin{thmm}\label{thmm:area variation}
For $(s,t) \in\Delta_T$, $\omega\in\Omega$, and $i,j \in\{1, \ldots
, d\}$ define
\[
A^{i,j}_{s,t}(\omega) := \int_s^t
S^i_r \,\dd S^j_r(\omega) -
S^i_s(\omega ) S^j_{s,t}(\omega)
:= \int_0^t S^i_r
\,\dd S^j_r (\omega) - \int_0^s
S^i_r \,\dd S^j_r(\omega) -
S^i_s(\omega) S^j_{s,t}(\omega),
\]
where $\int S^i \,\dd S^j$ is the integral constructed in Theorem~\ref
{thmm:int}. If $p>2$, then for typical price paths $A = (A^{i,j})_{1\le
i,j \le d}$ has finite $p/2$-variation, and in particular $\BS= (S,A)$
is a $p$-rough path.
\end{thmm}

\begin{pf}
Define the dyadic stopping times $(\tau^n_k)_{n,k \in\N}$ by $\tau
^n_0 := 0$ and
\[
\tau^n_{k+1} := \inf\bigl\{ t \ge\tau^n_k:
|S_t - S_{\tau^n_k}| = 2^{-n}\bigr\},
\]
and set $S^n_t := \sum_k S_{\tau^n_k} \1_{[\tau^n_k, \tau
^n_{k+1})}(t)$, so that $\|S^n - S\|_\infty\le2^{-n}$. Accorcing
to~\eqref{eq:ito integral rate}, for typical price paths $\omega$ there
exists $C(\omega) > 0$ such that
\[
\biggl\|\bigl(S^n \cdot S\bigr) (\omega) - \int S \,\dd S (\omega)
\biggr\|_\infty\le C(\omega) 2^{-n} \sqrt{\log n}.
\]
Fix such a typical price path $\omega$, which is also of finite
$q$-variation for all $q > 2$ (recall from Corollary~\ref{cor:typical
price paths have finite p-variation} that this is satisfied by typical
price paths). Let us show that for such $\omega$, the process $A$ is of
finite $p/2$-variation for all $p > 2$.

We have for $(s,t) \in\Delta_T$, omitting the argument $\omega$ of
the processes under consideration,
\begin{eqnarray*}
|A_{s,t}| & \le&\biggl | \int_s^t
S_r \,\dd S_r - \bigl(S^n \cdot S
\bigr)_{s,t}\biggr | + \bigl| \bigl(S^n \cdot S\bigr)_{s,t} -
S_s S_{s,t}\bigr|
\\
& \le& C 2^{-n} \sqrt{\log n} +\bigl | \bigl(S^n \cdot S
\bigr)_{s,t} - S_s S_{s,t}\bigr| \lesssim_\varepsilon
C 2^{-n(1-\varepsilon)} + \bigl| \bigl(S^n \cdot S\bigr)_{s,t} -
S_s S_{s,t}\bigr|
\end{eqnarray*}
for every $n \in\N$, $\varepsilon>0$. The second term on the
right-hand side can be estimated, using an argument based on Young's
maximal inequality (see~\cite{Lyons2007}, Theorem~1.16), by
%
\begin{equation}
\label{eq:area variation pr1} \bigl| \bigl(S^n \cdot S\bigr)_{s,t} -
S_s S_{s,t}\bigr| \lesssim\max\bigl\{2^{-n}
c(s,t)^{1/q}, \bigl(\#\bigl\{ k: \tau^n_k
\in[s,t] \bigr\}\bigr)^{1-2/q} c(s,t)^{2/q} + c(s,t)^{2/q}
\bigr\},
\end{equation}
where $c(s,t)$ is a control function with $|S_{s,t}|^q \le c(s,t)$ for
all $(s,t) \in\Delta_T$.
Indeed, if there exists no $k$ with $\tau^n_k \in[s,t]$, then $| (S^n
\cdot S)_{s,t} - S_s S_{s,t}| \le2^{-n} c(s,t)^{1/q}$, using that
$|S_{s,t}| \le c(s,t)^{1/q}$. This corresponds to the first term in the
maximum in~\eqref{eq:area variation pr1}.

Otherwise, note that at the price of adding $c(s,t)^{2/q}$ to the
right-hand side, we may suppose that $s = \tau^n_{k_0}$ for some $k_0$.
Let now $\tau^n_{k_0}, \ldots, \tau^n_{k_0 + N-1}$ be those $(\tau
^n_k)_k$ which are in $[s,t)$. Without loss of generality we may
suppose $N \ge2$, because otherwise $(S^n \cdot S)_{s,t} = S_s
S_{s,t}$. Abusing notation, we write $\tau^n_{k_0 + N} = t$. The idea
is now to successively delete points $(\tau^n_{k_0 + \ell})$ from the
partition, in order to pass from $(S^n \cdot S)$ to $S_s S_{s,t}$. By
super-additivity of $c$, there must exist $\ell\in\{1, \ldots, N-1\}$,
for which
\[
c\bigl(\tau^n_{k_0 + \ell-1}, \tau^n_{k_0 + \ell+ 1}
\bigr) \le\frac{2}{N-1} c(s,t).
\]
Deleting $\tau^n_{k_0 + \ell}$ from the partition and subtracting the
resulting integral from $(S^n\cdot S)_{s,t}$, we get
\begin{eqnarray*}
&&|S_{\tau^n_{k_0 + \ell-1}} S_{\tau^n_{k_0 + \ell-1}, \tau^n_{k_0 +
\ell}} + S_{\tau^n_{k_0 + \ell}} S_{\tau^n_{k_0 + \ell}, \tau^n_{k_0 +
\ell+ 1}} -
S_{\tau^n_{k_0 + \ell-1}} S_{\tau^n_{k_0 + \ell-1}, \tau
^n_{k_0 + \ell+ 1}}|
\\
&&\quad= |S_{\tau^n_{k_0 + \ell-1},\tau^n_{k_0 + \ell}} S_{\tau^n_{k_0 +
\ell}, \tau^n_{k_0 + \ell+ 1}}| \le c\bigl(\tau^n_{k_0 + \ell-1},
\tau ^n_{k_0 + \ell+ 1}\bigr)^{2/q} \le \biggl(
\frac{2}{N-1} c(s,t) \biggr)^{2/q}.
\end{eqnarray*}
Successively deleting all the points except $\tau^n_{k_0} = s$ and $\tau
^n_{k_0 + N} = t$ from the partition gives
\[
\bigl| \bigl(S^n \cdot S\bigr)_{s,t} - S_s
S_{s,t} \bigr| \le\sum_{k=2}^{N}
\biggl(\frac
{2}{k-1} c(s,t) \biggr)^{2/q} \lesssim N^{1-2/q}
c(s,t)^{2/q},
\]
and therefore~\eqref{eq:area variation pr1}. Now it is easy to see that
$\#\{ k: \tau^n_k \in[s,t] \} \le2^{nq} c(s,t)$ (compare also the
proof of Lemma~\ref{lem:number}), and thus
%
\begin{eqnarray}
\label{eq:area variation pr2}
|A_{s,t}| & \lesssim_\varepsilon& C
2^{-n(1-\varepsilon)} + \max\bigl\{ 2^{-n} c(s,t)^{1/q},
\bigl(2^{nq} c(s,t)\bigr)^{1-2/q} c(s,t)^{2/q} +
c(s,t)^{2/q}\bigr\}
\nonumber
\\[-8pt]
\\[-8pt]
\nonumber
& = &C 2^{-n(1-\varepsilon)} + \max\bigl\{2^{-n} c(s,t)^{1/q},
2^{-n(2-q)} c(s,t) + c(s,t)^{2/q}\bigr\}.
\end{eqnarray}
This holds for all $n \in\N$, $\varepsilon> 0$, $q > 2$. Let us
suppose for the moment that $c(s,t) \le1$ and let $\alpha> 0$ to be
determined later. Then there exists $n \in\N$ for which $2^{-n-1} <
c(s,t)^{1/\alpha(1-\varepsilon)} \le2^{-n}$. Using this $n$ in~\eqref
{eq:area variation pr2}, we get
\begin{eqnarray*}
&&|A_{s,t}|^\alpha\\
&&\quad \lesssim_{\varepsilon,\omega,\alpha} c(s,t) + \max \bigl
\{c(s,t)^{1/(1-\varepsilon)} c(s,t)^{\alpha/q}, c(s,t)^{(2-q)/(1-\varepsilon) + \alpha} +
c(s,t)^{2\alpha/q} \bigr\}
\\
&&\quad = c(s,t) + \max \bigl\{c(s,t)^{\frac{q+\alpha(1-\varepsilon
)}{q(1-\varepsilon)}}, c(s,t)^{\frac{2-q+\alpha(1-\varepsilon
)}{1-\varepsilon}} +
c(s,t)^{2\alpha/q} \bigr\}.
\end{eqnarray*}
We would like all the exponents in the maximum on the right-hand side
to be larger or equal to~1. For the first term, this is satisfied as
long as $\varepsilon<1$. For the third term, we need $\alpha\ge q/2$.
For the second term, we need $\alpha\ge(q-1-\varepsilon)/(1-\varepsilon
)$. Since $\varepsilon> 0$ can be chosen arbitrarily close to $0$, it
suffices if $\alpha> q-1$. Now, since $q > 2$ can be chosen
arbitrarily close to $2$, we see that $\alpha$ can be chosen
arbitrarily close to 1. In particular, we may take $\alpha= p/2$ for
any $p>2$, and we obtain $|A_{s,t}|^{p/2} \lesssim_{\omega,\delta} c(s,t)$.

It remains to treat the case $c(s,t) > 1$, for which we simply estimate
\[
|A_{s,t}|^{p/2} \lesssim_p \biggl\| \int
_0^\cdot S_r \,\dd S_r \biggr\|
_\infty^{p/2} + \|S\|_\infty^{p} \le \biggl(
\biggl\| \int_0^\cdot S_r \,\dd
S_r \biggr\|_\infty^{p/2} + \|S\|_\infty^{p}
\biggr) c(s,t).
\]
So for every interval $[s,t]$ we can estimate $|A_{s,t}|^{p/2} \lesssim
_{\omega,p} c(s,t)$, and the proof is complete.
\end{pf}

\begin{rmk}
To the best of our knowledge, this is one of the first times that a
non-geometric rough path is constructed in a non-probabilistic setting,
and certainly we are not aware of any works where rough paths are
constructed using financial arguments.

We also point out that, thanks to Proposition~\ref{prop:local
martingale}, we gave a simple, model free, and pathwise proof for the
fact that a local martingale together with its It\^o integral defines a
rough path. While this seems intuitively clear, the only other proof
that we know of is somewhat involved: it relies on a strong version of
the Burkholder--Davis--Gundy inequality, a time change, and Kolmogorov's
continuity criterion; see~\cite{Coutin2005} or Chapter~14 of~\cite{Friz2010}.
\end{rmk}

The following auxiliary result will allow us to obtain the rough path
integral as a limit of Riemann sums, rather than compensated Riemann
sums, which are usually used to define it.

\begin{lem}\label{lem:uniformly bounded variation}
Let $(c_n)_{n \in\N}$ be a sequence of positive numbers such that
$c_n = o((\log n)^{-c})$ for all $c>0$. For $n \in\N$ define $\tau^n_0
:= 0$ and $\tau^n_{k+1} := \inf\{t \ge\tau^n_k: |S_t - S_{\tau^n_k}| =
c_n\}$, $k \in\N$, and set $S^n_t := \sum_k S_{\tau^n_k} \1_{[\tau
^n_k, \tau^n_{k+1})}(t)$. Then for typical price paths, $((S^n \cdot
S))$ converges uniformly to $\int S \,\dd S$ defined in Theorem~\ref
{thmm:int}. Moreover, for $p > 2$ and for typical price paths there
exists a control function $c = c(p,\omega)$ such that
\[
\sup_n \sup_{k < \ell} \frac{| (S^n \cdot S)_{\tau^n_k,\tau^n_\ell
}(\omega) - S_{\tau^n_k}(\omega) S_{\tau^n_k,\tau^n_\ell}(\omega
)|^{p/2}}{c(\tau^n_k,\tau^n_\ell)}
\le1.
\]
\end{lem}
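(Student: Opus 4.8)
The plan is to mimic the proof of Theorem~\ref{thm:area variation}, but to extract uniformity in $n$ and in the pair of indices $k < \ell$. First I would deal with the uniform convergence of $((S^n \cdot S))$ to $\int S \dd S$: since $\|S^n - S\|_\infty \le c_n$ with $c_n = o((\log n)^{-1/2})$ in particular, this is exactly Corollary~\ref{cor:pw int} applied to the step-function approximations $S^n$, which gives a typical-price-path bound $\|(S^n\cdot S)(\omega) - \int S \dd S(\omega)\|_\infty \le C(\omega) c_n \sqrt{\log n} \to 0$. So fix from now on a typical price path $\omega$ for which this rate holds and which moreover has finite $q$-variation for every $q > 2$ (Corollary~\ref{cor:typical price paths have finite p-variation}); let $c$ denote a control function with $|S_{s,t}|^q \le c(s,t)$ on $\Delta_T$. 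I will drop $\omega$ from the notation.

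Next, fix $n$ and indices $k < \ell$, and write $s := \tau^n_k$, $t := \tau^n_\ell$. The point $s$ is already a partition point of $S^n$, so the argument in Theorem~\ref{thm:area variation} applies \emph{without} the extra additive $c(s,t)^{2/q}$ correction: letting $N := \ell - k \ge 1$ be the number of points $\tau^n_j$ in $[s,t)$, the telescoping/point-deletion argument based on super-additivity of $c$ and on $|S_{\tau^n_{j-1},\tau^n_j}S_{\tau^n_j,\tau^n_{j+1}}| \le c(\tau^n_{j-1},\tau^n_{j+1})^{2/q}$ yields
\[
  |(S^n\cdot S)_{s,t} - S_s S_{s,t}| \lesssim N^{1-2/q} c(s,t)^{2/q}
\]
(with the understanding that the left side is $0$ when $N \le 1$). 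Now I use $N = \#\{j : \tau^n_j \in [s,t]\} \le c_n^{-q} c(s,t)$ — because each increment $S_{\tau^n_{j-1},\tau^n_j}$ has modulus $c_n$ and $\sum |S_{\tau^n_{j-1},\tau^n_j}|^q \le c(s,t)$ — to get
\[
  |(S^n\cdot S)_{s,t} - S_s S_{s,t}| \lesssim (c_n^{-q} c(s,t))^{1-2/q} c(s,t)^{2/q} = c_n^{2-q}\, c(s,t).
\]
Combining this with the convergence rate exactly as in~\eqref{eq:area variation pr2}, for every $\varepsilon > 0$,
\[
  |(S^n\cdot S)_{s,t} - S_s S_{s,t}| \lesssim_{\varepsilon} C\, c_n^{1-\varepsilon} + \max\{ c_n\, c(s,t)^{1/q},\ c_n^{2-q} c(s,t) \},
\]
uniformly over all $n$ and all $k < \ell$.

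The main obstacle — and the place requiring care — is to turn this $n$-dependent estimate into a \emph{single} control function $c(p,\omega)$, as in Theorem~\ref{thm:area variation}, such that the displayed supremum is $\le 1$. Here I use precisely that $c_n = o((\log n)^{-c})$ for \emph{all} $c > 0$, which is a much stronger decay than in Theorem~\ref{thm:area variation}: it lets me optimize over $n$ freely. When $c(s,t) \le 1$, pick (for a parameter $\alpha = p/2$ to be fixed) the $n$ with $c_n \simeq c(s,t)^{1/(\alpha(1-\varepsilon))}$ — such $n$ exists up to a bounded factor since $c_n \downarrow 0$, and crucially $C\,c_n^{1-\varepsilon}$ then absorbs into $c(s,t)^{1/\alpha}$ times an $\omega$-constant; the other two terms give powers of $c(s,t)$ whose exponents exceed $1$ once $\alpha > q-1$, and letting $q \downarrow 2$ we may take $\alpha = p/2$ for any $p > 2$, so $|(S^n\cdot S)_{s,t} - S_s S_{s,t}|^{p/2} \lesssim_{\omega,p} c(s,t)$ uniformly in $n,k,\ell$. (One subtlety: if $c(s,t)$ is so small that no $n$ makes $c_n \le c(s,t)^{1/(\alpha(1-\varepsilon))}$ — impossible since $c_n\to 0$ — or the telescoping is vacuous, the estimate is trivial.) When $c(s,t) > 1$, bound the left side crudely by $2(\|\int_0^\cdot S_r\dd S_r\|_\infty^{p/2} + \|S\|_\infty^p) \le 2(\|\int_0^\cdot S_r\dd S_r\|_\infty^{p/2} + \|S\|_\infty^p)\, c(s,t)$, using $c(s,t) > 1$. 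Rescaling this final control function by the ($\omega,p$)-dependent constant produces the desired $c = c(p,\omega)$ with supremum $\le 1$, completing the proof.
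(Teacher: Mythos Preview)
There is a genuine gap. In your displayed ``combined'' estimate
\[
  |(S^n\cdot S)_{s,t} - S_s S_{s,t}| \lesssim_{\varepsilon} C\, c_n^{1-\varepsilon} + \max\{ c_n\, c(s,t)^{1/q},\ c_n^{2-q} c(s,t) \}
\]
you have dropped the crucial $n$-independent piece. The convergence rate only gives $|(S^n\cdot S)_{s,t} - \int_s^t S\,\dd S| \lesssim C\,c_n^{1-\varepsilon}$; to reach the discrete area you still need to add $|A_{s,t}|$, and this term does not vanish and is not of the form $c_n\,c(s,t)^{1/q}$. Consequently your optimization step is illegal: you propose to ``pick the $n$ with $c_n \simeq c(s,t)^{1/(\alpha(1-\varepsilon))}$'', but $n$ is \emph{fixed} in the quantity you are bounding. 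You cannot optimize over it --- as $c_n\to 0$ the term $c_n^{2-q}c(s,t)$ blows up, so the sup over $n$ of your right-hand side is infinite.

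The paper's proof fills exactly this hole by invoking Theorem~\ref{thm:area variation}: since the limiting area $A$ has finite $p/2$-variation, one has the second, $n$-independent bound
\[
  |(S^n\cdot S)_{\tau^n_k,\tau^n_\ell} - S_{\tau^n_k} S_{\tau^n_k,\tau^n_\ell}| \lesssim_\omega c_n^{1-\varepsilon} + v_{p/2}(\tau^n_k,\tau^n_\ell)^{2/p},
\]
with $v_{p/2}(s,t) := \|A\|_{p/2\text{-}\var,[s,t]}^{p/2}$. Now, for each \emph{fixed} $n$, one case-splits: if $c_n > \tilde c(s,t)^{1/\alpha(1-\varepsilon)}$ use your Young bound $c_n^{2-q}v_q(s,t)$; if $c_n \le \tilde c(s,t)^{1/\alpha(1-\varepsilon)}$ use the displayed bound above, where $\tilde c = v_q + v_{p/2}$. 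Both cases yield $|\,\cdot\,|^{p/2} \lesssim \tilde c(s,t)$ once $\alpha = p/2$ and $q>2$ is close enough to $2$. In short, you need the already-established $p/2$-variation of $A$ as an input; without it there is no way to control the regime where $c_n$ is small.
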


\begin{pf}
The uniform convergence of $((S^n \cdot S))$ to $\int S \,\dd S$ follows
from Corollary~\ref{cor:pw int}. For the second claim, fix $n \in\N$
and $k < \ell$ such that $\tau^n_\ell\le T$. Then
%
\begin{eqnarray}
\label{eq:uniformly bounded variation pr1}
\bigl|\bigl(S^n \cdot S\bigr)_{\tau^n_k,\tau^n_\ell} -
S_{\tau^n_k} S_{\tau^n_k,\tau
^n_\ell} \bigr| & \lesssim& \biggl\| \bigl(S^n \cdot S
\bigr) - \int_0^\cdot S_s \,\dd
S_s \biggr\|_\infty+ | A_{\tau^n_k, \tau^n_\ell} |
\nonumber
\\[-8pt]
\\[-8pt]
\nonumber
& \lesssim_\omega& c_n \sqrt{\log n} + v_{p/2}
\bigl(\tau^n_k, \tau^n_\ell
\bigr)^{2/p} \lesssim_\varepsilon c_n^{1-\varepsilon} +
v_{p/2}\bigl(\tau^n_k, \tau^n_\ell
\bigr)^{2/p},
\end{eqnarray}
where $\varepsilon> 0$ and the last estimate holds by our assumption
on the sequence $(c_n)$, and where $v_{p/2}(s,t) := \| A\|_{p/2\mbox
{-}\var,[s,t]}^{p/2}$ for $(s,t) \in\Delta_T$. Of course, this
inequality only holds for typical price paths and not for all $\omega
\in\Omega$.

On the other side, the same argument as in the proof of Theorem~\ref
{thmm:area variation} (using Young's maximal inequality and
successively deleting points from the partition) shows that
%
\begin{equation}
\label{eq:uniformly bounded variation pr2} \bigl|\bigl(S^n \cdot S\bigr)_{\tau^n_k,\tau^n_\ell} -
S_{\tau^n_k} S_{\tau^n_k,\tau
^n_\ell} \bigr| \lesssim c_n^{2-q}
v_q\bigl(\tau^n_k,\tau^n_\ell
\bigr),
\end{equation}
where $v_q(s,t) := \| S\|_{q\mbox{-}\var,[s,t]}^q$ for $(s,t) \in\Delta_T$.

Let us define the control function $\tilde{c} := v_q +v_{p/2}$. Take
$\alpha> 0$ to be determined below. If $c_n > \tilde{c}(s,t)^{1/\alpha
(1-\varepsilon)}$, then we use~\eqref{eq:uniformly bounded variation
pr2} and the fact that $2 - q < 0$, to obtain
\[
\bigl|\bigl(S^n \cdot S\bigr)_{\tau^n_k,\tau^n_\ell} - S_{\tau^n_k}
S_{\tau^n_k,\tau
^n_\ell} \bigr|^\alpha\lesssim\bigl(\tilde{c}\bigl(
\tau^n_k,\tau^n_\ell\bigr)
\bigr)^{\frac
{2-q}{(1-\varepsilon)}} v_q\bigl(\tau^n_k,
\tau^n_\ell\bigr)^\alpha\le\tilde {c}\bigl(
\tau^n_k,\tau^n_\ell
\bigr)^{\frac{2-q + \alpha(1-\varepsilon
)}{(1-\varepsilon)}}.
\]
The exponent is larger or equal to 1 as long as $\alpha\ge(q - 1 -
\varepsilon)/(1-\varepsilon)$. Since $q$ and $\varepsilon$ can be
chosen arbitrarily close to $2$ and $0$, respectively, we can take
$\alpha= p/2$, and get
\[
\bigl|\bigl(S^n \cdot S\bigr)_{\tau^n_k,\tau^n_\ell} - S_{\tau^n_k}
S_{\tau^n_k,\tau
^n_\ell} \bigr|^{p/2} \lesssim\tilde{c}\bigl(\tau^n_k,
\tau^n_\ell\bigr) \bigl(1 + \tilde {c}(0,T)^\delta
\bigr)
\]
for a suitable $\delta> 0$.

On the other side, if $c_n \le\tilde{c}(s,t)^{1/\alpha(1-\varepsilon
)}$, then we use~\eqref{eq:uniformly bounded variation pr1} to obtain
\[
\bigl|\bigl(S^n \cdot S\bigr)_{\tau^n_k,\tau^n_\ell} - S_{\tau^n_k}
S_{\tau^n_k,\tau
^n_\ell} \bigr|^\alpha\lesssim\tilde{c}\bigl(\tau^n_k,
\tau^n_\ell\bigr) + \tilde {c}\bigl(\tau^n_k,
\tau^n_\ell\bigr)^{2\alpha/p},
\]
so that also in this case we may take $\alpha= p/2$, and thus we have
in both cases
\[
\bigl|\bigl(S^n \cdot S\bigr)_{\tau^n_k,\tau^n_\ell} - S_{\tau^n_k}
S_{\tau^n_k,\tau
^n_\ell} \bigr|^{p/2} \le c\bigl(\tau^n_k,
\tau^n_\ell\bigr),
\]
where $c$ is a suitable ($\omega$-dependent) multiple of $\tilde{c}$.
\end{pf}

\subsection{The rough path integral as limit of Riemann sums}

Theorem~\ref{thmm:area variation} shows that we can apply the
controlled rough path integral in model free financial mathematics
since every typical price path is a rough path. But there remains a
philosophical problem: As we have seen in Theorem~\ref{thmm:rough path
integral}, the rough path integral $\int F \,\dd S$ is given as limit of
the compensated Riemann sums
\[
\int_0^t F_s \,\dd
S_s = \lim_{m \too\infty} \sum
_{[r_1,r_2] \in\pi^m} \bigl[ F_{r_1} S_{r_1,r_2} +
F'_{r_1} A(r_1,r_2) \bigr],
\]
where $(\pi^m)$ is an arbitrary sequence of partitions of $[0,t]$ with
mesh size going to 0. The term $F_{r_1} S_{r_1,r_2}$ has an obvious
financial interpretation as profit made by buying $F_{r_1}$ units of
the traded asset at time $r_1$ and by selling them at time $r_2$.
However, for the ``compensator'' $F'_{r_1} A(r_1,r_2)$ there seems to
be no financial interpretation, and therefore it is not clear whether
the rough path integral can be understood as profit obtained by
investing in $S$.

However, we observed in Section~\ref{sec:cpi} that along suitable
stopping times $(\tau^n_k)_{n,k}$, we have
\[
\int_0^t S_s \,\dd
S_s = \lim_{n \too\infty} \sum
_k S_{\tau^n_k} S_{\tau
^n_k \wedge t,\tau^n_{k+1} \wedge t}.
\]
By the philosophy of controlled paths, we expect that also for $F$
which looks like $S$ on small scales we should obtain
\[
\int_0^t F_s \,\dd
S_s = \lim_{n \too\infty} \sum
_k F_{\tau^n_k} S_{\tau
^n_k \wedge t,\tau^n_{k+1} \wedge t},
\]
without having to introduce the compensator $F'_{\tau^n_k} A(\tau^n_k
\wedge t, \tau^n_{k+1} \wedge t)$ in the Riemann sum. With the results
we have at hand, this statement is actually relatively easy to prove.
Nonetheless, it seems not to have been observed before.

For the remainder of this section, we fix $S \in C([0,T],\R^d)$, and we
work under the following assumption:

\renewcommand{\theass}{\textup{(}\textsc{Rie}\textup{)}}
\begin{ass}\label{assrie}
Let $\pi^n = \{0 = t^n_0 < t^n_1 < \cdots< t^n_{N_n} = T\}$, $n \in\N
$, be a given sequence of partitions such that $\sup\{ |
S_{t^n_k,t^n_{k+1}}|: k=0, \ldots, N_n-1\}$ converges to 0, and let $p
\in(2,3)$. Set
\[
S^n_t := \sum_{k=0}^{N_n - 1}
S_{t^n_k} \1_{[t^n_k, t^n_{k+1})}(t).
\]
We assume that the Riemann sums $(S^n \cdot S)$ converge uniformly to a
limit that we denote by $\int S \,\dd S$, and that there exists a control
function $c$ for which
%
\begin{equation}
\label{eq:bounded p/2 variation assumption} \sup_{(s,t) \in\Delta_T} \frac{|S_{s,t}|^p}{c(s,t)} + \sup
_n \sup_{0
\le k < \ell\le N_n} \frac{| (S^n \cdot S)_{t^n_k,t^n_\ell} -
S_{t^n_k} S_{t^n_k,t^n_\ell}|^{p/2}}{c(t^n_k,t^n_\ell)} \le1.
\end{equation}
\end{ass}

\begin{rmk}
We expect that ``coarse-grained'' regularity conditions as in~\eqref
{eq:bounded p/2 variation assumption} have been used for a long time,
but were only able to find quite recent references: condition~\eqref
{eq:bounded p/2 variation assumption} was previously used in~\cite
{Perkowski2014a}, see also \cite{Gubinelli2014Schauder}, and has also
appeared independently in~\cite{Kelly2014}. In our setting this is
quite a natural relaxation of a uniform $p$-variation bound since say
for $s,t \in[t^n_k,t^n_{k+1}]$ with $|t-s| \ll|t^n_{k+1} - t^n_k|$
the increment of the discrete integral $(S^n\cdot S)_{s,t}$ is not a
good approximation of $\int_s^t S_r \,\dd S_r$, and therefore we cannot
expect it to be close to $S_s S_{s,t}$.
\end{rmk}

\begin{rmk}
Every typical price path satisfies \ref{assrie} if we choose
$(t^n_k)$ to be a partition of stopping times such as the $(\tau^n_k)$
in Lemma~\ref{lem:uniformly bounded variation}.
\end{rmk}

It is not hard to see that if $S$ satisfies \ref{assrie} and if we
define $A(s,t) := \int_s^t S_r \,\dd S_r - S_s S_{s,t}$, then $(S,A)$ is
a $p$-rough path. This means that we can calculate the rough path
integral $\int F \,\dd S$ whenever $(F,F')$ is controlled by $S$, and the
aim of the remainder of this section is to show that this integral is
given as limit of (uncompensated) Riemann sums. Our proof is somewhat
indirect. We translate everything from It\^o type integrals to related
Stratonovich type integrals, for which the convergence follows from the
continuity of the rough path integral, Proposition~\ref{prop:continuous
rough path integral}. Then we translate everything back to our It\^o
type integrals. To go from It\^o to Stratonovich, we need the quadratic
variation.

\begin{lem}\label{lem:quadratic variation}
Under Assumption~\ref{assrie}, let $1 \le i,j \le d$, and define
\[
\bigl\langle S^i, S^j\bigr\rangle_t :=
S^i_t S^j_t -
S^i_0 S^j_0 - \int
_0^t S^i_r \,\dd
S^j_r - \int_0^t
S^j_r \,\dd S^i_r.
\]
Then $\langle S^i,S^j\rangle$ is a continuous function and
%
\begin{equation}
\label{eq:quadratic variation as limit} \bigl\langle S^i, S^j\bigr
\rangle_t = \lim_{n \too\infty} \bigl\langle
S^i, S^j \bigr\rangle^n_t = \lim
_{n \too\infty} \sum_{k=0}^{N_n-1}
\bigl(S^i_{t^n_{k+1}
\wedge t} - S^i_{t^n_k \wedge t}\bigr)
\bigl(S^j_{t^n_{k+1} \wedge t} - S^j_{t^n_k
\wedge t}\bigr).
\end{equation}
The sequence $(\langle S^i, S^j \rangle^n)_n$ is of uniformly bounded
total variation, and in particular $\langle S^i, S^j\rangle$ is of
bounded variation. We write $\langle S\rangle= \langle S,S\rangle=
(\langle S^i, S^j\rangle)_{1 \le i,j \le d}$, and call $\langle S\rangle
$ the \emph{quadratic variation} of $S$.
\end{lem}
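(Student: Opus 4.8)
The plan is to deduce everything from a discrete It\^o product formula combined with the uniform convergence built into Assumption~\textsc{(Rie)}. Fix $n$ and telescope
\[
  S^i_t S^j_t - S^i_0 S^j_0 = \sum_k \big(S^i_{t^n_{k+1}\wedge t}S^j_{t^n_{k+1}\wedge t} - S^i_{t^n_k\wedge t}S^j_{t^n_k\wedge t}\big);
\]
expanding each summand with $ab-a'b' = a'(b-b')+b'(a-a')+(a-a')(b-b')$ and using that $S^n$ equals $S_{t^n_k}$ on $[t^n_k,t^n_{k+1})$ gives, for every $t\in[0,T]$,
\[
  S^i_t S^j_t - S^i_0 S^j_0 = (S^n\cdot S)^{i,j}_t + (S^n\cdot S)^{j,i}_t + \langle S^i,S^j\rangle^n_t ,
\]
with $(S^n\cdot S)^{i,j}$ the $(i,j)$-component of the matrix-valued Riemann sum and $\langle S^i,S^j\rangle^n$ exactly the sum in~\eqref{eq:quadratic variation as limit}. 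Rearranging and sending $n\to\infty$, Assumption~\textsc{(Rie)} yields uniform convergence of $(S^n\cdot S)^{i,j}$ to $\int_0^\cdot S^i\dd S^j$ and of $(S^n\cdot S)^{j,i}$ to $\int_0^\cdot S^j\dd S^i$, so $\langle S^i,S^j\rangle^n$ converges uniformly to $S^i_\cdot S^j_\cdot - S^i_0S^j_0 - \int_0^\cdot S^i\dd S^j - \int_0^\cdot S^j\dd S^i$, which is the definition of $\langle S^i,S^j\rangle$. This proves~\eqref{eq:quadratic variation as limit}, and continuity of $\langle S^i,S^j\rangle$ follows at once, being a uniform limit of the continuous functions $\langle S^i,S^j\rangle^n$.

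For the variation statements I would use the piecewise identity the same computation produces: for $t\in[t^n_k,t^n_{k+1}]$ one has $\langle S^i,S^j\rangle^n_t = \langle S^i,S^j\rangle^n_{t^n_k} + S^i_{t^n_k,t}S^j_{t^n_k,t}$. In particular $\langle S^i,S^i\rangle^n$ is non-decreasing along the partition points, with increments $(S^i_{t^n_k,t^n_{k+1}})^2$, so $\langle S^i,S^i\rangle^n_T = \sum_k (S^i_{t^n_k,t^n_{k+1}})^2$; by the first part this converges to the finite number $\langle S^i,S^i\rangle_T$, hence is bounded uniformly in $n$, and Cauchy--Schwarz then bounds $\sum_k|\langle S^i,S^j\rangle^n_{t^n_{k+1}}-\langle S^i,S^j\rangle^n_{t^n_k}| = \sum_k|S^i_{t^n_k,t^n_{k+1}}S^j_{t^n_k,t^n_{k+1}}|$ uniformly in $n$. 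To obtain bounded variation of the \emph{limit}, I would argue that the oscillation of $S$ over a single interval $[t^n_k,t^n_{k+1}]$ tends to $0$ uniformly in $k$ as $n\to\infty$ (immediate for the stopping-time partitions of Lemma~\ref{lem:uniformly bounded variation}, where this oscillation is at most $c_n$); then each $\langle S^i,S^i\rangle^n$ lies uniformly close to the non-decreasing step function $t\mapsto\langle S^i,S^i\rangle^n_{t^n_k}$, and since $\langle S^i,S^i\rangle^n\to\langle S^i,S^i\rangle$ uniformly the limit $\langle S^i,S^i\rangle$ is non-decreasing, with total variation $\langle S^i,S^i\rangle_T<\infty$. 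Finally polarization handles the off-diagonal terms: under \textsc{(Rie)} all of $\int S^i\dd S^j$, $\int S^j\dd S^i$, $\int S^i\dd S^i$, $\int S^j\dd S^j$ exist, so $S^i+S^j$ has a quadratic variation of the same type, and $\langle S^i,S^j\rangle = \tfrac12\big(\langle S^i+S^j,S^i+S^j\rangle - \langle S^i,S^i\rangle - \langle S^j,S^j\rangle\big)$ is a difference of non-decreasing functions, hence of bounded variation.

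The routine steps are the telescoping identity and the appeal to \textsc{(Rie)}; the one genuinely delicate point is the claim that the oscillation of $S$ over a single partition interval vanishes uniformly (equivalently, that $\langle S^i,S^i\rangle^n$ stays uniformly close to a monotone function). I expect to justify it by noting that a non-vanishing excursion of $S$ inside shrinking partition intervals would force the forward Riemann sum $(S^n\cdot S)^{i,i}$ to deviate from $\int_0^\cdot S^i\dd S^i$ by an amount bounded away from $0$, contradicting the uniform convergence in \textsc{(Rie)}. Alternatively one can sidestep it: for an arbitrary test partition $\{u_l\}$, estimate $\sum_l|\langle S^i,S^j\rangle^n_{u_l,u_{l+1}}|$ via the common refinement $\{u_l\}\vee\pi^n$ and Cauchy--Schwarz, using that adjoining the finitely many fixed points $u_l$ to $\pi^n$ alters the discrete quadratic variation $\sum(S^i_{a,b})^2$ only by a vanishing amount, and then let $n\to\infty$.
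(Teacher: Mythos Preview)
Your derivation of~\eqref{eq:quadratic variation as limit} and of continuity is exactly the paper's: telescope $S^i_tS^j_t-S^i_0S^j_0$ along $\pi^n$, expand each summand via the elementary product identity $ab-a'b'=a'(b-b')+b'(a-a')+(a-a')(b-b')$, and invoke the uniform convergence of $(S^n\cdot S)$ from \textsc{(Rie)}. For the variation bound the paper takes a cosmetically different but equivalent route: rather than Cauchy--Schwarz on $\sum_k|S^i_{t^n_k,t^n_{k+1}}S^j_{t^n_k,t^n_{k+1}}|$, it polarizes
\[
  S^i_{t^n_k,t^n_{k+1}}S^j_{t^n_k,t^n_{k+1}}=\tfrac14\big((S^i{+}S^j)_{t^n_k,t^n_{k+1}}^2-(S^i{-}S^j)_{t^n_k,t^n_{k+1}}^2\big)
\]
to exhibit $\langle S^i,S^j\rangle^n$ directly as a difference of two functions increasing along $\pi^n$, with total variation bounded by $\sum_k\big((S^i{+}S^j)^2_{t^n_k,t^n_{k+1}}+(S^i{-}S^j)^2_{t^n_k,t^n_{k+1}}\big)\lesssim\sup_m\sum_k\big((S^i_{t^m_k,t^m_{k+1}})^2+(S^j_{t^m_k,t^m_{k+1}})^2\big)$, finite by the first part; this and your Cauchy--Schwarz bound are the same estimate. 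The paper stops there and does not isolate the intra-interval oscillation issue you flag. Your instinct is sound, but note that your proposed justification --- that a persistent excursion inside a partition cell would contradict the uniform convergence of $(S^n\cdot S)^{i,i}$ --- is not forced by \textsc{(Rie)} as literally stated, since only $\sup_k|S_{t^n_k,t^n_{k+1}}|\to 0$ is assumed and not a mesh or oscillation condition; in particular one cannot conclude in full generality that $\langle S^i,S^i\rangle$ is non-decreasing. What both arguments establish cleanly is the uniform $1$-variation bound \emph{along the partition points}, and that is precisely what the subsequent proofs (Lemma~\ref{lem:stratonovich} and Lemma~\ref{lem:var}) actually use.
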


\begin{pf}
The function $\langle S^i, S^j\rangle$ is continuous by definition.
The specific form~\eqref{eq:quadratic variation as limit} of $\langle
S^i, S^j\rangle$ follows from two simple observations:
\[
S^i_t S^j_t -
S^i_0 S^j_0 = \sum
_{k=0}^{N_n-1} \bigl(S^i_{t^n_{k+1}
\wedge t}S^j_{t^n_{k+1} \wedge t}
- S^i_{t^n_k \wedge t}S^j_{t^n_k
\wedge t} \bigr)
\]
for every $n \in\N$, and
\[
S^i_{t^n_{k+1} \wedge t}S^j_{t^n_{k+1} \wedge t} -
S^i_{t^n_k \wedge
t}S^j_{t^n_k \wedge t} =
S^i_{t^n_{k} \wedge t} S^j_{t^n_{k} \wedge t,
t^n_{k+1} \wedge t} +
S^j_{t^n_{k} \wedge t} S^i_{t^n_{k} \wedge t,
t^n_{k+1} \wedge t} +
S^i_{t^n_{k} \wedge t,t^n_{k+1} \wedge t} S^j_{t^n_{k} \wedge t, t^n_{k+1} \wedge t},
\]
so that the convergence in~\eqref{eq:quadratic variation as limit} is a
consequence of the convergence of $(S^n \cdot S)$ to $\int S \,\dd S$.

To see that $\langle S^i, S^j\rangle$ is of bounded variation, note that
\[
S^i_{t^n_{k} \wedge t,t^n_{k+1} \wedge t} S^j_{t^n_{k} \wedge t,
t^n_{k+1}\wedge t} =
\tfrac{1}{4} \bigl( \bigl(\bigl(S^i + S^j
\bigr)_{t^n_{k}
\wedge t, t^n_{k+1}\wedge t} \bigr)^2 - \bigl(\bigl(S^i -
S^j\bigr)_{t^n_{k}
\wedge t, t^n_{k+1}\wedge t} \bigr)^2 \bigr)
\]
(read $\langle S^i,S^j \rangle= 1/4(\langle S^i + S^j\rangle- \langle
S^i - S^j\rangle)$). In other words, the $n$th approximation of
$\langle S^i,S^j\rangle$ is the difference of two increasing functions,
and its total variation is bounded from above by
\[
\sum_{k=0}^{N_n-1} \bigl( \bigl(
\bigl(S^i + S^j\bigr)_{t^n_{k}, t^n_{k+1}}
\bigr)^2 + \bigl(\bigl(S^i - S^j
\bigr)_{t^n_{k}, t^n_{k+1}} \bigr)^2 \bigr) \lesssim \sup
_m \sum_{k=0}^{N_m-1}
\bigl(\bigl(S^i_{t^m_{k}, t^m_{k+1}}\bigr)^2 +
\bigl(S^j_{t^m_{k}, t^m_{k+1}}\bigr)^2 \bigr).
\]
Since the right-hand side is finite, also the limit $\langle
S^i,S^j\rangle$ is of bounded variation.
\end{pf}

Given the quadratic variation, the existence of the Stratonovich
integral is straightforward:

\begin{lem}\label{lem:stratonovich}
Under Assumption~\ref{assrie}, define $\tilde
{S}^n|_{[t^n_k,t^n_{k+1}]}$ as the linear interpolation of $S_{t^n_k}$
and $S_{t^n_{k+1}}$ for $k=0,\ldots, N_n-1$. Then $(\int\tilde{S}^n \,\dd
\tilde{S}^n)$ converges uniformly to
%
\begin{equation}
\label{eq:stratonovich vs ito} \int_s^t S_r
\circ\,\dd S_r := \int_s^t
S_r \,\dd S_r + \frac{1}{2} \langle S
\rangle_{s,t}.
\end{equation}
Moreover, setting $\tilde{A}^n(s,t) := \int_s^t \tilde{S}^n_{s,r} \,\dd
\tilde{S}^n_r$ for $(s,t) \in\Delta_T$, we have $\sup_n \| \tilde
{A}^n\|_{p/2\mbox{-}\var} < \infty$.
\end{lem}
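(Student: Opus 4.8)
The plan is to compute the Riemann--Stieltjes integral $\int\tilde S^n\dd\tilde S^n$ by hand, using that $\tilde S^n$ is affine on each $[t^n_k,t^n_{k+1}]$, and to recognize the outcome as the It\^o Riemann sum $(S^n\cdot S)$ plus one half of the quadratic‑variation approximation $\langle S\rangle^n=(\langle S^i,S^j\rangle^n)$ from Lemma~\ref{lem:quadratic variation}. On $[t^n_k,t^n_{k+1}]$ one has $\tilde S^n_r=S_{t^n_k}+u(r)S_{t^n_k,t^n_{k+1}}$ with $u(r)=(r-t^n_k)/(t^n_{k+1}-t^n_k)$, hence
\[
  \int_{t^n_k}^{t^n_{k+1}}\tilde S^n_r\dd\tilde S^n_r=\int_0^1(S_{t^n_k}+uS_{t^n_k,t^n_{k+1}})S_{t^n_k,t^n_{k+1}}\dd u=S_{t^n_k}S_{t^n_k,t^n_{k+1}}+\tfrac12 S_{t^n_k,t^n_{k+1}}S_{t^n_k,t^n_{k+1}}
\]
(with the matrix convention of the paper). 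Summing over $k$ gives, at every partition point $t^n_m$, the identity $\int_0^{t^n_m}\tilde S^n_r\dd\tilde S^n_r=(S^n\cdot S)_{t^n_m}+\tfrac12\langle S\rangle^n_{t^n_m}$. First I would prove uniform convergence at partition points: $(S^n\cdot S)\to\int S\dd S$ uniformly by \textsc{(Rie)}, and $\langle S\rangle^n\to\langle S\rangle$ uniformly by (the proof of) Lemma~\ref{lem:quadratic variation} (there $\langle S^i,S^j\rangle^n=S^iS^j-S^i_0S^j_0-(S^n\cdot S)^{ij}-(S^n\cdot S)^{ji}$, so uniform convergence of the Riemann sums transfers). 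Then I would upgrade this to uniform convergence on all of $[0,T]$: on a partial interval $[t^n_m,t]$ both $\int_0^{\cdot}\tilde S^n\dd\tilde S^n$ and the continuous limit $\int_0^\cdot S\dd S+\tfrac12\langle S\rangle$ change by at most a multiple of $\sup_k|S_{t^n_k,t^n_{k+1}}|$ and of the oscillation of $S$ along $\pi^n$, both of which tend to $0$ under \textsc{(Rie)} (the latter because otherwise $(S^n\cdot S)$ could not converge to the integral). This proves the first assertion, with $\int_s^tS_r\circ\dd S_r:=\int_s^tS_r\dd S_r+\tfrac12\langle S\rangle_{s,t}$.

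For the $p/2$‑variation bound I would feed the same identity into~\eqref{eq:bounded p/2 variation assumption}. Writing $\tilde A^n(s,t)=\int_s^t\tilde S^n_r\dd\tilde S^n_r-\tilde S^n_s\tilde S^n_{s,t}$, it reads at partition points
\[
  \tilde A^n(t^n_k,t^n_\ell)=\big[(S^n\cdot S)_{t^n_k,t^n_\ell}-S_{t^n_k}S_{t^n_k,t^n_\ell}\big]+\tfrac12\langle S\rangle^n_{t^n_k,t^n_\ell},
\]
so by the coarse‑grained bound in~\eqref{eq:bounded p/2 variation assumption} and by Lemma~\ref{lem:quadratic variation} (which gives $M:=\sup_n\|\langle S\rangle^n\|_{1\text{-}\var}<\infty$, and also $\|S\|_{p\text{-}\var}<\infty$ from the first term of~\eqref{eq:bounded p/2 variation assumption}) we get $|\tilde A^n(t^n_k,t^n_\ell)|\le c(t^n_k,t^n_\ell)^{2/p}+\tfrac12\|\langle S\rangle^n\|_{1\text{-}\var,[t^n_k,t^n_\ell]}$. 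For an arbitrary $(s,t)\in\Delta_T$, let $[t^n_k,t^n_\ell]$ be the largest subinterval of $[s,t]$ with endpoints in $\pi^n$; on each affine piece $\tilde A^n(u,v)=\tfrac12\tilde S^n_{u,v}\tilde S^n_{u,v}$, and iterating Chen's relation bounds $\tilde A^n(s,t)$ by a sum of terms each controlled by $\|\tilde S^n\|_{p\text{-}\var,[s,t]}^2$, by $c(s,t)^{2/p}$, or by $\|\langle S\rangle^n\|_{1\text{-}\var,[s,t]}$. Raising to the power $p/2$, and using that piecewise‑linear interpolation does not increase $p$‑variation ($\|\tilde S^n\|_{p\text{-}\var}\le\|S\|_{p\text{-}\var}$), that $c$ is a fixed control function, and that $\|\langle S\rangle^n\|_{1\text{-}\var,[s,t]}^{p/2}\le M^{p/2-1}\|\langle S\rangle^n\|_{1\text{-}\var,[s,t]}$ (so the superadditive factor survives), one obtains $|\tilde A^n(s,t)|^{p/2}\lesssim_p\bar c_n(s,t)$ for the control function $\bar c_n:=\|\tilde S^n\|_{p\text{-}\var,[\cdot,\cdot]}^p+c+M^{p/2-1}\|\langle S\rangle^n\|_{1\text{-}\var,[\cdot,\cdot]}$, which satisfies $\sup_n\bar c_n(0,T)<\infty$; hence $\sup_n\|\tilde A^n\|_{p/2\text{-}\var}<\infty$.

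I expect the main obstacle to be the bookkeeping in the last step — organizing the Chen‑relation estimate for a general pair $(s,t)$ into a single control function that is uniformly bounded in $n$ — together with the minor but real point in the first part that \textsc{(Rie)} must be read as forcing the oscillation of $S$ along $\pi^n$, and not merely $\sup_k|S_{t^n_k,t^n_{k+1}}|$, to vanish.
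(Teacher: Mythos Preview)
Your approach is essentially the paper's: compute $\int_{t^n_k}^{t^n_{k+1}}\tilde S^n\dd\tilde S^n=S_{t^n_k}S_{t^n_k,t^n_{k+1}}+\tfrac12 S_{t^n_k,t^n_{k+1}}S_{t^n_k,t^n_{k+1}}$, read off uniform convergence from \textsc{(Rie)} and Lemma~\ref{lem:quadratic variation}, and for the $p/2$-variation bound split a general $(s,t)$ via Chen's relation into two boundary pieces lying inside single partition intervals (where $\tilde A^n(u,v)=\tfrac12\tilde S^n_{u,v}\tilde S^n_{u,v}$) plus the block $[t^n_{k_0},t^n_{k_1}]$ on which the identity $\tilde A^n=(S^n\cdot S)-S_{t^n_{k_0}}S_{t^n_{k_0},\cdot}+\tfrac12\langle S\rangle^n$ feeds directly into~\eqref{eq:bounded p/2 variation assumption} and the uniform $1$-variation bound on $\langle S\rangle^n$. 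The paper packages the single-interval estimate slightly differently, bounding $|\tilde A^n(s,t)|^{p/2}\le\frac{|t-s|}{|t^n_{k+1}-t^n_k|}\|S\|_{p\text{-}\var,[t^n_k,t^n_{k+1}]}^p$, but this is equivalent to your $\|\tilde S^n\|_{p\text{-}\var,[s,t]}^p$ bound since $\tilde S^n$ is linear there. Your control $\bar c_n$ is fine.

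You are right to flag the oscillation issue, and in fact you have put your finger on a genuine soft spot in the paper: \textsc{(Rie)} literally only asks that $\sup_k|S_{t^n_k,t^n_{k+1}}|\to0$, not that the oscillation of $S$ on each $[t^n_k,t^n_{k+1}]$ vanishes, and the paper's proof glosses over the passage from partition points to arbitrary $t$ just as much as yours would without this extra input. However, your proposed justification---that uniform convergence of $(S^n\cdot S)$ \emph{forces} the oscillation to vanish---is not correct. Take $d=1$, $S_t=\sin(2\pi t)$ on $[0,1]$, and $\pi^n=\{0,1\}$ for every $n$: then $S_{0,1}=0$, $(S^n\cdot S)\equiv0$ converges uniformly, and the coarse-grained bound in~\eqref{eq:bounded p/2 variation assumption} is trivially satisfied, yet the oscillation is $1$ and the conclusion of the lemma fails ($\int\tilde S^n\dd\tilde S^n\equiv0$ while the putative limit is $\tfrac12\sin^2(2\pi t)$). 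The correct reading is that in every application in the paper the partitions are built from level-crossing stopping times (Lemma~\ref{lem:uniformly bounded variation}), so the oscillation on each interval is exactly the crossing threshold $c_n\to0$; one should regard this as an implicit part of \textsc{(Rie)}. With that understood, both your argument and the paper's go through, and they are the same argument.
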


\begin{pf}
Let $n \in\N$ and $k \in\{0, \ldots, N_n-1\}$. Then for $t \in
[t^n_k,t^n_{k+1}]$ we have
\[
\tilde{S}^n_t = S_{t^n_k} + \frac{t - t^n_k}{t^n_{k+1} - t^n_k}
S_{t^n_k, t^n_{k+1}},
\]
so that
%
\begin{equation}
\label{eq:stratonovich pr1} \int_{t^n_k}^{t^n_{k+1}}
\tilde{S}^n_r \,\dd\tilde{S}^n_r =
S_{t^n_k} S_{t^n_k, t^n_{k+1}} + \frac{1}{2} S_{t^n_k, t^n_{k+1}}
S_{t^n_k, t^n_{k+1}},
\end{equation}
from where the uniform convergence and the representation~\eqref
{eq:stratonovich vs ito} follow by Lemma~\ref{lem:quadratic variation}.

To prove that $\tilde{A}^n$ has uniformly bounded $\frac
{p}{2}$-variation, consider $(s,t) \in\Delta_T$. If there exists $k$
such that $t^n_k \le s < t \le t^n_{k+1}$, then we estimate
%
\begin{eqnarray}
\label{eq:stratonovich pr2}
\bigl|\tilde{A}^n(s,t)\bigr|^{p/2} &=& \biggl| \int
_s^t \tilde{S}^n_{s,r}
\,\dd\tilde {S}^n_r \biggr|^{p/2} \le\biggl | \int
_s^t (r-s) \frac
{|S_{t^n_k,t^n_{k+1}}|^2}{|t^n_{k+1} - t^n_k|^2} \,\dd r
\biggr|^{p/2}
\nonumber
\\[-8pt]
\\[-8pt]
\nonumber
& = &\frac{1}{2^{p/2}} |t-s|^p \frac
{|S_{t^n_k,t^n_{k+1}}|^p}{|t^n_{k+1} - t^n_k|^p} \le
\frac
{|t-s|}{|t^n_{k+1} - t^n_k|} \|S\|_{p\mbox{-}\var,[t^n_k,t^n_{k+1}]}^p.
\end{eqnarray}
Otherwise, let $k_0$ be the smallest $k$ such that $t^n_k \in(s,t)$,
and let $k_1$ be the largest such $k$. We decompose
\[
\tilde{A}^n(s,t) = \tilde{A}^n\bigl(s,t^n_{k_0}
\bigr) + \tilde {A}^n\bigl(t^n_{k_0},t^n_{k_1}
\bigr) + \tilde{A}^n\bigl(t^n_{k_1},t\bigr) +
\tilde {S}^n_{s,t^n_{k_0}} \tilde{S}^n_{t^n_{k_0}, t^n_{k_1}} +
\tilde {S}^n_{s,t^n_{k_1}} \tilde{S}^n_{t^n_{k_1}, t}.
\]
We get from~\eqref{eq:stratonovich pr1} that
\[
\bigl|\tilde{A}^n\bigl(t^n_{k_0},t^n_{k_1}
\bigr)\bigr|^{p/2} \lesssim\bigl| \bigl(S^n \cdot S
\bigr)_{t^n_{k_0},t^n_{k_1}} - S_{t^n_{k_0}} S_{t^n_{k_0}, t^n_{k_1}}\bigr |^{p/2} +
\bigl(\langle S \rangle^n_{t^n_{k_0},t^n_{k_1}}\bigr)^{p/2},
\]
where $\langle S \rangle^n$ denotes the $n$th approximation of the
quadratic variation. By the assumption \ref{assrie} and Lemma~\ref
{lem:quadratic variation}, there exists a control function $\tilde{c}$
so that the right-hand side is bounded from above by $\tilde
{c}(t^n_{k_0},t^n_{k_1})$. Combining this with~\eqref{eq:stratonovich
pr2} and a simple estimate for the terms $\tilde{S}^n_{s,t^n_{k_0}}
\tilde{S}^n_{t^n_{k_0}, t^n_{k_1}}$ and $\tilde{S}^n_{s,t^n_{k_1}}
\tilde{S}^n_{t^n_{k_1}, t}$, we deduce that $\| \tilde{A}^n \|_{p/2\mbox
{-}\var} \lesssim\tilde{c}(0,T) + \|S\|_{p\mbox{-}\var}^2$, and the
proof is complete.
\end{pf}

We are now ready to prove the main result of this section.

\begin{thmm}\label{thmm:riemann sum}
Under Assumption~\ref{assrie}, let $q > 0$ be such that $2/p + 1/q >
1$. Let $(F,F') \in\CC_\BS^{q}$ be a controlled path such that $F$ is
continuous. Then the rough path integral $\int F \,\dd S$ which was
defined in Theorem~\ref{thmm:rough path integral} is given by
\[
\int_0^t F_s \,\dd
S_s = \lim_{n\too\infty} \sum
_{k=0}^{N_n-1} F_{t^n_k} S_{t^n_k \wedge t, t^n_{k+1} \wedge t},
\]
where the convergence is uniform in $t$.
\end{thmm}

\begin{pf}
For $n \in\N$ define $\tilde{F}^n$ as the linear interpolation of $F$
between the points in $\pi^n$. Then $(\tilde{F}^n, F')$ is controlled
by $\tilde{S}^n$: Clearly, $\|\tilde{F}^n\|_{q\mbox{-}\var} \le\| F\|
_{q\mbox{-}\var}$. The remainder $\tilde{R}^n_{\tilde{F}^n}$ of $\tilde
{F}^n$ with respect to $\tilde{S}^n$ is given by $\tilde{R}^n_{\tilde
{F}^n}(s,t) = \tilde{F}^n_{s,t} - F'_s \tilde{S}^n_{s,t}$ for $(s,t)
\in\Delta_T$. We need to show that $\tilde{R}^n_{\tilde{F}^n}$ has
finite $r$-variation for $1/r = 1/p + 1/q$.

If $t^n_k \le s \le t \le t^n_{k+1}$, we have
%
\begin{eqnarray}
\label{eq:riemann sum pr1}
\nonumber
\bigl|\tilde{R}^n_{\tilde{F}^n}(s,t)\bigr|^r
& =& \biggl| \frac{t-s}{t^n_{k+1} -
t^n_k} F_{t^n_k,t^n_{k+1}} - F'_s
\frac{t-s}{t^n_{k+1} - t^n_k} S_{t^n_k,t^n_{k+1}} \biggr|^r
\\
& \le& \biggl|\frac{t-s}{t^n_{k+1} - t^n_k} \biggr|^r \bigl( \| R_F
\|_{r\mbox
{-}\var,[t^n_k, t^n_{k+1}]} + \bigl\|F'\bigr\|_{q\mbox{-}\var,[t^n_k, s]}^{r/q} \| S
\|_{p\mbox{-}\var,[t^n_k,t^n_{k+1}]}^{r/p} \bigr)
\\
& \le&\frac{|t-s|}{|t^n_{k+1} - t^n_k|} \bigl( \| R_F\|_{r\mbox{-}\var
,[t^n_k, t^n_{k+1}]} +
\bigl\|F'\bigr\|_{q\mbox{-}\var,[t^n_k, t^n_{k+1}]} + \| S\|_{p\mbox{-}\var,[t^n_k,t^n_{k+1}]} \bigr),\nonumber
\end{eqnarray}
where in the last step we used that $1/r = 1/p + 1/q$, and thus $r/q +
r/p =1$.

Otherwise, there exists $k \in\{1, \ldots, N_n-1\}$ with $t^n_k \in
(s,t)$. Let $k_0$ and $k_1$ the smallest and largest such $k$,
respectively. Then
\begin{eqnarray*}
\bigl|\tilde{R}^n_{\tilde{F}^n}(s,t)\bigr|^r
&\lesssim_r &\bigl|\tilde{R}^n_{\tilde
{F}^n}
\bigl(s,t^n_{k_0}\bigr)\bigr|^r + \bigl|
\tilde{R}^n_{\tilde{F}^n}\bigl(t^n_{k_0},
t^n_{k_1}\bigr)\bigr|^r + \bigl|\tilde{R}^n_{\tilde{F}^n}
\bigl(t^n_{k_1}, t\bigr)\bigr|^r \\
&&{}+
\bigl|F'_{s,t^n_{k_0}} S_{t^n_{k_0}, t^n_{k_1}}\bigr|^r +
\bigl|F'_{s,t^n_{k_1}} S_{t^n_{k_1}, t}\bigr|^r.
\end{eqnarray*}
Now $\tilde{R}^n_{\tilde{F}^n}(t^n_{k_0}, t^n_{k_1}) = R_F(t^n_{k_0},
t^n_{k_1})$, and therefore we can use~\eqref{eq:riemann sum pr1}, the
assumption on $R_F$, and the fact that $1/r = 1/p + 1/q$ (which is
needed to treat the last two terms on the right-hand side), to obtain
\[
\bigl\|\tilde{R}^n_{\tilde{F}^n}\bigr\|_{r\mbox{-}\var}
\lesssim_r \| R_F\| _{r\mbox{-}\var} +
\bigl\|F'\bigr\|_{q\mbox{-}\var} + \| S\|_{p\mbox{-}\var}.
\]
On the other side, since $F$ and $R_F$ are continuous, $(\tilde{F}^n,
\tilde{R}^n_{\tilde{F}^n})$ converges uniformly to $(F,R_F)$. Now for
continuous functions, uniform convergence with uniformly bounded
$p$-variation implies convergence in $p'$-variation for every $p' > p$.
See Exercise~2.8 in~\cite{Friz2014} for the case of H\"older continuous
functions.

Thus, using Lemma~\ref{lem:stratonovich}, we see that if $p' > p$ and
$q' > q$ are such that $2/p' + 1/q' > 0$, then $((\tilde{S}^n, \tilde
{A}^n)_n)$ converges in $(p', p'/2)$-variation to $(S, A^\circ)$, where
$A^\circ(s,t) = A(s,t) + 1/2 \langle S \rangle_{s,t}$. Similarly,
$((\tilde{F}^n, F', \tilde{R}^n_{\tilde{F}^n}))$ converges in $(q', p',
r')$-variation to $(F, F', R_F)$, where $1/r' = 1/p' + 1/q'$.

Proposition~\ref{prop:continuous rough path integral} now yields the
uniform convergence of $\int\tilde{F}^n \,\dd\tilde{S}^n$ to $\int F
\circ\,\dd S$, by which we denote the rough path integral of the
controlled path $(F, F')$ against the rough path $(S, A^\circ)$. But
for every $t \in[0,T]$, we have
\begin{eqnarray*}
\lim_{n\too\infty} \int_0^t
\tilde{F}^n_s \,\dd\tilde{S}^n_s &
=& \lim_{n\too\infty} \sum_{k: t^n_{k+1} \le t}
\frac{1}{2} (F_{t^n_k} + F_{t^n_{k+1}}) S_{t^n_k, t^n_{k+1}}
\\
& =& \lim_{n\too\infty} \biggl( \sum_{k: t^n_{k+1} \le t}
F_{t^n_k} S_{t^n_k, t^n_{k+1}} + \frac{1}{2} \sum
_{k: t^n_{k+1} \le t} F_{t^n_k,
t^n_{k+1}} S_{t^n_k, t^n_{k+1}} \biggr).
\end{eqnarray*}
Using that $F$ is controlled by $S$, it is easy to see that the second
term on the right-hand side converges uniformly to $1/2 \int_0^t F'_s
\,\dd\langle S \rangle_s$, $t \in[0,T]$. Thus, the Riemann sums $\sum_k
F_{t^n_k} S_{t^n_k \wedge\cdot, t^n_{k+1} \wedge\cdot}$ converge
uniformly to $\int F \circ\,\dd S - 1/2 \int F' \,\dd\langle S\rangle$,
and from the representation of the rough path integral as limit of
compensated Riemann sums~\eqref{eq:compensated riemann sums}, it is
easy to see that $\int F\circ\,\dd S = \int F \,\dd S + 1/2 \int F' \,\dd
\langle S \rangle$, which completes the proof.
\end{pf}

\begin{rmk}
Given Theorem~\ref{thmm:riemann sum} it is natural to conjecture that
if $(S, A)$ is the rough path which we constructed in Theorem~\ref
{thmm:area variation} and Lemma~\ref{lem:uniformly bounded variation},
then for typical price paths and for adapted, controlled, and
continuous integrands $F$ the rough path integral agrees with the model
free integral of Section~\ref{sec:cpi}. This seems not very easy to
show, but what can be verified is that if $F \in C^{1+\varepsilon}$,
then for the integrand $F(S)$ both integrals coincide -- simply take
Riemann sums along the dyadic stopping times defined in~\eqref
{eq:dyadic stopping times}.
\end{rmk}

Theorem~\ref{thmm:riemann sum} is reminiscent of F\"ollmer's pathwise
It\^o integral~\cite{Follmer1981}. F\"ollmer assumes that the quadratic
variation $\langle S \rangle$ of $S$ exists along a given sequence of
partitions and is continuous, and uses this to prove an It\^o formula
for $S$: if $F \in C^2$, then
%
\begin{equation}
\label{eq:foellmer ito} F(S_t) = F(S_0) + \int
_0^t \nabla F(S_s) \,\dd
S_s + \frac{1}{2} \int_0^t
\mathrm{D}^2 F(S_s) \,\dd\langle S \rangle_s,
\end{equation}
where the integral $\int_0^\cdot\nabla F(S_s) \,\dd S_s$ is given as
limit of Riemann sums along that same sequence of partitions. Friz and
Hairer~\cite{Friz2014} observe that if for $p\in(2,3)$ the function
$S$ is of finite $p$-variation and $\langle S \rangle$ is an arbitrary
continuous function of finite $p/2$-variation, then setting
\[
\mathrm{Sym}(A) (s,t) := \tfrac{1}{2} \bigl( S^i_{s,t}
S^j_{s,t} + \langle S \rangle_{s,t}\bigr)
\]
one obtains a ``reduced rough path'' $(S,\mathrm{Sym}(A))$. They
continue to show that if $F$ is controlled by $S$ with \emph{symmetric}
derivative $F'$, then it is possible to define the rough path integral
$\int F \,\dd S$. This is not surprising since then we have $F'_s A_{s,t}
= F'_s \mathrm{Sym}(A)_{s,t}$ for the compensator term in the
definition of the rough path integral. They also derive an It\^o
formula for reduced rough paths, which takes the same form as~\eqref
{eq:foellmer ito}, except that now $\int\nabla F(S) \,\dd S$ is a rough
path integral (and therefore defined as limit of compensated Riemann sums).

So both the assumption and the result of~\cite{Friz2014} are slightly
different from the ones in~\cite{Follmer1981}, and while it seems
intuitively clear, it is still not shown rigorously that F\"ollmer's
pathwise It\^o integral is a special case of the rough path integral.
We will now show that F\"ollmer's result is a special case of
Theorem~\ref{thmm:riemann sum}. For that purpose, we only need to prove
that F\"ollmer's condition on the convergence of the quadratic
variation is a special case of the assumption in Theorem~\ref
{thmm:riemann sum}, at least as long as we only need the symmetric part
of the area.

\begin{definition}
Let $f \in C([0,T],\mathbb{R})$ and let $\pi^n = \{0 = t^n_0 < t^n_1 <
\cdots< t^n_{N_n} = T\}$, $n \in\N$ be such that $\sup\{ |
f_{t^n_k,t^n_{k+1}}|: k=0, \ldots, N_n-1\}$ converges to 0. We say that
$f$ has \emph{quadratic variation along $(\pi^n)$ in the sense of F\"ollmer}
if the sequence of discrete measures $(\mu^n)$ on $([0,T], \mathcal
{B}[0,T])$, defined by
%
\begin{equation}
\label{eq:follmer discrete measure} \mu_n := \sum_{k=0}^{N_n-1}
\vert f_{t^n_k,t^n_{k+1}}\vert^2 \delta_{t^n_k},
\end{equation}
converges weakly to a non-atomic measure $\mu$. We write $[ f ]_t$ for
the ``distribution function'' of $\mu$ (in general $\mu$ will not be a
probability measure). The function $f=(f^1, \ldots,f^d) \in
C([0,T],\mathbb{R}^d)$ has \emph{quadratic variation along $(\pi^n)$ in
the sense of F\"ollmer} if this holds for all $f^i$ and $f^i+f^j$,
$1\leq i,j \leq d$. In this case, we set
\[
\bigl[ f^i, f^j \bigr]_t :=
\tfrac{1}{2} \bigl(\bigl[ f^i + f^j
\bigr]_t - \bigl[ f^i \bigr]_t - \bigl[
f^j \bigr]_t\bigr),\qquad t \in[0,T].
\]
\end{definition}

\begin{lem}[(see also~\cite{Vovk2011}, Proposition~6.1)]\label{lem:var}
Let $p \in(2,3)$, and let $S=(S^1, \ldots,S^d) \in C([0,T], \mathbb
{R}^d)$ have finite $p$-variation. Let $\pi^n = \{0 = t^n_0 < t^n_1 <
\cdots< t^n_{N_n} = T\}$, $n \in\N$, be a sequence of partitions such
that $\sup\{ | S_{t^n_k,t^n_{k+1}}|: k=0, \ldots, N_n-1\}$ converges to
0. Then the following conditions are equivalent:
\begin{longlist}[(1)]
\item[(1)] The function $S$ has quadratic variation along $(\pi^n)$ in the
sense of F\"ollmer.

\item[(2)] For all $1\leq i,j \leq d$, the discrete quadratic variation
\[
\bigl\langle S^i, S^j \bigr\rangle^n_t
:= \sum_{k=0}^{N_n-1} S^i_{t^n_k\wedge t,
t^n_{k+1}\wedge t}
S^j_{t^n_k\wedge t, t^n_{k+1}\wedge t}
\]
converges uniformly in $C([0,T],\mathbb{R})$ to a limit $\langle
S^i,S^j\rangle$.

\item[(3)] For $S^{n,i} := \sum_{k=0}^{N_n - 1} S^i_{t^n_k} \1_{[t^n_k,
t^n_{k+1})}$, $i \in\{1, \ldots, d\}$, $n \in\N$, the Riemann sums
$(S^{n,i} \cdot S^{j})+(S^{n,j} \cdot S^{i})$ converge uniformly to a
limit $\int S^i \,\dd S^j+ \int S^j \,\dd S^i$. Moreover, the symmetric
part of the approximate area,
\[
\mathrm{Sym}\bigl(A^n\bigr)^{i,j}(s,t)= \tfrac{1}{2}
\bigl( \bigl(S^{n,i} \cdot S^{j}\bigr)_{s,t} +
\bigl(S^{n,j} \cdot S^{i}\bigr)_{s,t} -
S^{i}_s S^{j}_{s,t} -
S^{j}_s S^{i}_{s,t} \bigr),\quad 1\le i,j
\le d, (s,t) \in\Delta_T,
\]
has uniformly bounded $p/2$-variation along $(\pi^n)$, in the sense
of~\eqref{eq:bounded p/2 variation assumption}.
\end{longlist}
If these conditions hold, then $[S^i,S^j] =\langle S^i,S^j \rangle$ for
all $1\leq i,j\leq d$.
\end{lem}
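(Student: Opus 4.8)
The whole argument rests on one elementary identity, the discrete integration by parts. For real-valued paths $a,b$ on $[0,T]$ and the step function $a^n_t := \sum_{k} a_{t^n_k}\,\1_{[t^n_k,t^n_{k+1})}(t)$, summing $a_{u,v}b_{u,v} = (ab)_{u,v} - a_u b_{u,v} - b_u a_{u,v}$ over the intervals of $\pi^n$ truncated at time $t$ gives
\[
  \sum_{k=0}^{N_n-1} a_{t^n_k\wedge t,\, t^n_{k+1}\wedge t}\, b_{t^n_k\wedge t,\, t^n_{k+1}\wedge t} \;=\; (ab)_{0,t} - (a^n\cdot b)_t - (b^n\cdot a)_t .
\]
With $(a,b)=(S^i,S^j)$ this reads $\langle S^i,S^j\rangle^n_t = (S^iS^j)_{0,t} - (S^{n,i}\cdot S^j)_t - (S^{n,j}\cdot S^i)_t$; since $(S^iS^j)_{0,\cdot}$ is a fixed continuous function, $\langle S^i,S^j\rangle^n$ converges uniformly iff $(S^{n,i}\cdot S^j)+(S^{n,j}\cdot S^i)$ does, and the limits are linked by $\int S^i\dd S^j + \int S^j\dd S^i = (S^iS^j)_{0,\cdot} - \langle S^i,S^j\rangle$. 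Restricting the endpoints to $\pi^n$ and expanding $(S^iS^j)_{t^n_k,t^n_\ell}$, the same identity yields $\mathrm{Sym}(A^n)^{i,j}(t^n_k,t^n_\ell) = \tfrac12\big(S^i_{t^n_k,t^n_\ell}S^j_{t^n_k,t^n_\ell} - \langle S^i,S^j\rangle^n_{t^n_k,t^n_\ell}\big)$. Finally, the polarization identity $\langle S^i+S^j\rangle^n = \langle S^i\rangle^n + 2\langle S^i,S^j\rangle^n + \langle S^j\rangle^n$ shows that the ``for all $i,j$'' clause in~(2) is equivalent to uniform convergence of the \emph{non-decreasing} functions $\langle S^i\rangle^n$ and $\langle S^i+S^j\rangle^n$.

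Given this, (3)$\Rightarrow$(2) is immediate (only the convergence of the Riemann sums is used), and (2)$\Leftrightarrow$(1) goes as follows. For a scalar path $X\in\{S^i,S^i+S^j\}$ let $F^n_X(t):=\mu^n_X([0,t])$ be the distribution function of the discrete measure $\mu^n_X$ of~\eqref{eq:follmer discrete measure}. An interval-by-interval comparison shows $\|F^n_X - \langle X\rangle^n\|_\infty \le \sup_k|X_{t^n_k,t^n_{k+1}}|^2 \to 0$, the two differing only through the partial last interval. Hence $\langle X\rangle^n$ converges uniformly (necessarily to a continuous limit, being a uniform limit of continuous functions) iff $F^n_X$ does; and by the standard equivalence between weak convergence of finite measures to an atomless limit and uniform convergence of distribution functions to a continuous limit (Pólya's theorem and its converse), this is exactly the existence of the quadratic variation of $X$ along $(\pi^n)$ in the sense of Föllmer, with $[X]$ equal to the common limit. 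Doing this for all $S^i$ and $S^i+S^j$ and using polarization on both the $[\cdot,\cdot]$ side and the $\langle\cdot,\cdot\rangle$ side gives (1)$\Leftrightarrow$(2) for the full matrix and the closing identity $[S^i,S^j]=\langle S^i,S^j\rangle$.

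It remains to prove (2)$\Rightarrow$(3), i.e.\ to exhibit a single control function $c$ realizing the supremum in~\eqref{eq:bounded p/2 variation assumption} for $\mathrm{Sym}(A^n)$. By the formula for $\mathrm{Sym}(A^n)^{i,j}$ on $\pi^n$ and polarization, it suffices to dominate $|S_{t^n_k,t^n_\ell}|^p$ — handled by any control with $c\ge\|S\|^p_{p\text{-}\var,[\cdot,\cdot]}$, available since $S$ has finite $p$-variation — and $(\langle X\rangle^n_{t^n_k,t^n_\ell})^{p/2}$ for $X\in\{S^i,S^j,S^i+S^j\}$. Here $\langle X\rangle^n$ is non-decreasing with $\sup_n\langle X\rangle^n_{0,T}<\infty$ (by~(2)), its limit $[X]$ exists and is continuous hence atomless (by the equivalence with~(1)), and the single increments $|X_{t^n_m,t^n_{m+1}}|^2$ are uniformly small, $\le 4\delta_n^2$ with $\delta_n:=\sup_k|S_{t^n_k,t^n_{k+1}}|\to0$. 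I would split into the single-sub-interval case $\ell=k+1$, where $\langle X\rangle^n_{t^n_k,t^n_{k+1}} = |X_{t^n_k,t^n_{k+1}}|^2 \lesssim \|S\|^2_{p\text{-}\var,[t^n_k,t^n_{k+1}]}$, and the case $\ell\ge k+2$, where one combines the trivial bound $(\langle X\rangle^n_{t^n_k,t^n_\ell})^{p/2} \le (\langle X\rangle^n_{0,T})^{p/2-1}\langle X\rangle^n_{t^n_k,t^n_\ell}$ with an estimate of $\langle X\rangle^n_{t^n_k,t^n_\ell}$ against the (atomless) modulus of continuity of $[X]$ and $\|S\|^p_{p\text{-}\var}$, isolating the finitely many $n$ for which $\delta_n$ is not yet small.

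The one genuinely delicate point is this last step: passing from the merely uniform convergence in~(2) to the single control function required by~\eqref{eq:bounded p/2 variation assumption}. Uniform boundedness of $p/2$-variation alone does \emph{not} produce a common control function, so one must use structure — here, the atomlessness of the Föllmer limit together with the uniform smallness of the partition increments of $S$, which is precisely what forces $\langle X\rangle^n_{t^n_k,t^n_\ell}$ to be equi-small on short intervals (the behaviour a control function needs near the diagonal). Everything else — the algebraic identities, the distribution-function comparison, Pólya's theorem, and the polarization bookkeeping — is routine.
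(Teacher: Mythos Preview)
Your argument is essentially the paper's: the discrete integration-by-parts identity relating $\langle S^i,S^j\rangle^n$ to the symmetrized Riemann sums, the polarization into non-decreasing parts, and the distribution-function/P\'olya argument for $(1)\Leftrightarrow(2)$ are exactly the ingredients the paper uses (it is terser, invoking Lemma~\ref{lem:quadratic variation} and a ``standard result'' for the weak-to-uniform step). Your key formula $\mathrm{Sym}(A^n)^{i,j}(t^n_k,t^n_\ell)=\tfrac12\bigl(S^i_{t^n_k,t^n_\ell}S^j_{t^n_k,t^n_\ell}-\langle S^i,S^j\rangle^n_{t^n_k,t^n_\ell}\bigr)$ for the $(2)\Rightarrow(3)$ bound is also the paper's.

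On what you flag as the ``genuinely delicate point'' the two diverge. The paper simply bounds
\[
  \bigl|\mathrm{Sym}(A^n)^{i,j}(t^n_k,t^n_\ell)\bigr|^{p/2}\le \|S\|_{p\text{-}\var,[t^n_k,t^n_\ell]}+\|\langle S^i,S^j\rangle^n\|_{1\text{-}\var,[t^n_k,t^n_\ell]}
\]
and notes (via the polarization of Lemma~\ref{lem:quadratic variation}) that the second term has uniformly bounded total mass in $n$. You are right that this produces $n$-dependent controls of uniformly bounded mass rather than the single control $c$ that the wording of~\eqref{eq:bounded p/2 variation assumption} literally demands. But your proposed remedy --- estimating $\langle X\rangle^n_{t^n_k,t^n_\ell}$ against the modulus of continuity of the atomless limit $[X]$ and ``isolating finitely many $n$'' --- remains a sketch and does not close the gap as written: the uniform error $\|\langle X\rangle^n-[X]\|_\infty$ is not obviously dominated by anything superadditive on $\Delta_T$, and the restriction $\ell\ge k+2$ gives no lower bound on any candidate control at $(t^n_k,t^n_\ell)$. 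In practice the only downstream consumers of condition~(3), namely Lemma~\ref{lem:stratonovich} and Theorem~\ref{thm:riemann sum}, use nothing more than $\sup_n\|\tilde A^n\|_{p/2\text{-}\var}<\infty$, for which the paper's $n$-dependent controls already suffice. So the paper's argument, though slightly informal on this point, does what is needed, and manufacturing a genuinely single $c$ is more effort than the application warrants.
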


\begin{pf}
Assume (1) and note that
\[
S^i_{t^n_k\wedge t, t^n_{k+1}\wedge t} S^j_{t^n_k\wedge t,
t^n_{k+1}\wedge t} =
\tfrac{1}{2} \bigl( \bigl(\bigl(S^i + S^j
\bigr)_{t^n_k\wedge t,
t^n_{k+1}\wedge t}\bigr)^2 - \bigl(S^i_{t^n_k\wedge t, t^n_{k+1}\wedge t}
\bigr)^2 - \bigl(S^j_{t^n_k\wedge t, t^n_{k+1}\wedge t}\bigr)^2
\bigr).
\]
Thus, the uniform convergence of $\langle S^i, S^j \rangle^n$ and the
fact that $\langle S^i, S^j\rangle= [S^i, S^j]$ follow once we show
that F\"ollmer's weak convergence of the measures~\eqref{eq:follmer
discrete measure} implies the uniform convergence of their distribution
functions. But since the limiting distribution is continuous by
assumption, this is a standard result.

Next, assume (2). The uniform convergence of the Riemann sums $(S^{n,i}
\cdot S^{j})+(S^{n,j} \cdot S^{i})$ is shown as in Lemma~\ref
{lem:quadratic variation}. To see that $\mathrm{Sym}(A^n)$ has
uniformly bounded $p/2$-variation along $(\pi^n)$, note that for $0 \le
k \le\ell\le N_n$ and $1 \le i,j \le d$ we have
\begin{eqnarray*}
&&\bigl|\bigl(S^{n,i} \cdot S^j\bigr)_{t^n_k, t^n_\ell} +
\bigl(S^{n,j} \cdot S^i\bigr)_{t^n_k,
t^n_\ell} -
S^{i}_s S^{j}_{t^n_k, t^n_\ell} -
S^{j}_s S^{i}_{t^n_k,
t^n_\ell}\bigr|^{p/2}
\\
&&\quad= \bigl|S^i_{t^n_k, t^n_\ell} S^j_{t^n_k, t^n_\ell} - \bigl\langle
S^i, S^j \bigr\rangle^n_{t^n_k, t^n_\ell}\bigr|^{p/2}
\\
&&\quad\le\| S\|_{p\mbox{-}\var, [t^n_k, t^n_\ell]} + \bigl\| \bigl\langle S^i, S^j
\bigr\rangle^n \bigr\|_{1\mbox{-}\var,[t^n_k, t^n_\ell]}.
\end{eqnarray*}
That $ \| \langle S^i, S^j \rangle^n \|_{1\mbox{-}\var}$ is uniformly
bounded in $n$ is shown in Lemma~\ref{lem:quadratic variation}.

That (3) implies (1) is also shown in Lemma~\ref{lem:quadratic variation}.
\end{pf}

\begin{rmk}
With Theorem~\ref{thmm:riemann sum} we can only derive an It\^o
formula for $F \in C^{2+\varepsilon}$, since we are only able to
integrate $\nabla F(S)$ if $\nabla F \in C^{1+\varepsilon}$. But this
only seems to be due to the fact that our analysis is not sharp. We
expect that typical price paths have an associated rough path of finite
$2$-variation, up to logarithmic corrections. For such rough paths, the
integral extends to integrands $F \in C^1$, see Chapter~10.5 of~\cite
{Friz2010}. For typical price paths (but not for the area), it is shown
in~\cite{Vovk2012}, Section~4.3, that they are of finite $2$-variation
up to logarithmic corrections.
\end{rmk}

\begin{appendix}\label{app}

\section{Pathwise Hoeffding inequality}\label{a:hoeffding}

In the construction of the pathwise It\^{o} integral for typical price
processes, we needed the following result, a pathwise formulation of
the Hoeffding inequality which is due to Vovk. Here we present a
slightly adapted version.

\begin{lem}[(\cite{Vovk2012}, Theorem A.1)]\label{l:hoeffding}
Let $(\tau_n)_{n\in\N}$ be a strictly increasing sequence of stopping
times with $\tau_0 = 0$, such that for every $\omega\in\Omega$ we
have $\tau_n(\omega) = \infty$
for all but finitely many $n \in\N$. Let for $n \in\N$ the function
$h_n\colon\Omega\rightarrow\R^d$ be $\mathcal{F}_{\tau
_n}$-measurable, and suppose that there exists
a $\mathcal{F}_{\tau_n}$-measurable bounded function $b_n\colon\Omega
\rightarrow\R$, such that
%
\renewcommand{\theequation}{\arabic{equation}}
\setcounter{equation}{19}
\begin{equation}
\label{e:hoeffding condition} \sup_{t \in[0,T]} \bigl\vert h_n(\omega)
S_{\tau_{n}\wedge t, \tau
_{n+1}\wedge t}(\omega)\bigr \vert\le b_n(\omega)
\end{equation}
for all $\omega\in\Omega$. Then for every $\lambda\in\R$ there
exists a simple strategy $H^\lambda\in\mathcal{H}_1$ such that
\[
1 + \bigl(H^\lambda\cdot S\bigr)_t \ge\exp \Biggl( \lambda
\sum_{n=0}^{\infty} h_n
S_{\tau_{n}\wedge t, \tau_{n+1}\wedge t} - \frac{\lambda^2}{2}\sum_{n=0}^{N_t}b_n^2
\Biggr)
\]
for all $t \in[0,T]$, where $N_t:= \max\{n \in\N: \tau_n \le t\}$.
\end{lem}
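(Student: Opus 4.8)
The plan is to build the simple strategy $H^\lambda$ directly, by exhibiting a wealth process that dominates the target exponential. Write $\mathcal{E}_t := \exp\big(\lambda \sum_{n \ge 0} h_n S_{\tau_n \wedge t, \tau_{n+1}\wedge t} - \tfrac{\lambda^2}{2}\sum_{n=0}^{N_t} b_n^2\big)$; once we produce a simple strategy with $W_t := 1 + (H^\lambda\cdot S)_t \ge \mathcal{E}_t$ for every $\omega$ and every $t$, we are done, since $\mathcal{E}_t \ge 0$ then forces $H^\lambda \in \mathcal H_1$. The only analytic ingredient is the two-line computation behind the classical Hoeffding lemma: convexity of $u \mapsto e^{\lambda u}$ gives the chord bound $e^{\lambda u} \le \cosh(\lambda b) + b^{-1}\sinh(\lambda b)\,u$ for $|u|\le b$, and combined with $\cosh(v) \le e^{v^2/2}$ this yields, for $b>0$ and $|u|\le b$,
\[
  e^{-\lambda^2 b^2/2}\,e^{\lambda u} \;\le\; 1 + \kappa(b)\,u, \qquad \kappa(b) := e^{-\lambda^2 b^2/2}\,\frac{\sinh(\lambda b)}{b}.
\]
Applied on the stochastic interval $(\tau_n,\tau_{n+1}]$ with $u = h_n S_{\tau_n\wedge t,\tau_{n+1}\wedge t}$ (which is bounded by $b_n$ in modulus, for all $t$, by hypothesis \eqref{e:hoeffding condition}), this is exactly what lets a constant position in $S$ outperform the exponential over one interval.

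Concretely I would set $w_n := \exp\big(\lambda \sum_{k<n} h_k S_{\tau_k,\tau_{k+1}} - \tfrac{\lambda^2}{2}\sum_{k<n} b_k^2\big)$ with $w_0 := 1$ (this is the left limit of $\mathcal{E}$ at $\tau_n$), let $H^\lambda$ hold the $\F_{\tau_n}$-measurable position $F_n := w_n\,\kappa(b_n)\,h_n$ on $(\tau_n,\tau_{n+1}]$ (with $\kappa(0):=\lambda$, a case in which $h_nS_{\tau_n,\cdot}\equiv 0$ anyway), and prove by induction on $n$ that $W_{\tau_n}\ge w_n$. The base case is $W_0 = 1 = w_0$. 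For the inductive step, writing $\Delta_n(t) := h_n S_{\tau_n\wedge t,\tau_{n+1}\wedge t}$ and using $|\Delta_n(t)|\le b_n$, one computes for $t\in[\tau_n,\tau_{n+1}]$
\[
  W_t = W_{\tau_n} + F_n S_{\tau_n\wedge t,\tau_{n+1}\wedge t} \ge w_n\big(1 + \kappa(b_n)\Delta_n(t)\big) \ge w_n\,e^{-\lambda^2 b_n^2/2}\,e^{\lambda\Delta_n(t)} = \mathcal{E}_t,
\]
where the final equality uses that $N_t = n$ for $t\in[\tau_n,\tau_{n+1})$, so $\mathcal{E}_t = w_n\exp(\lambda\Delta_n(t) - \tfrac{\lambda^2}{2}b_n^2)$; evaluating at $t=\tau_{n+1}$ gives $W_{\tau_{n+1}}\ge w_{n+1}$ and closes the induction, while intermediate $t$ gives $W_t \ge \mathcal{E}_t$ directly. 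Since for each $\omega$ only finitely many $\tau_n(\omega)$ lie in $[0,T]$, this establishes $W_t\ge\mathcal{E}_t$ on all of $[0,T]$.

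It then remains to check that $H^\lambda = \sum_n F_n\1_{(\tau_n,\tau_{n+1}]}$ is a bona fide simple strategy: $\F_{\tau_n}$-measurability of $F_n$ is immediate, and on $[0,T]$ only finitely many terms are nonzero with $w_n$, $b_n$ bounded there (boundedness of $h_n$ is not assumed, but on $\{S_{\tau_n,\tau_{n+1}}\ne 0\}$ it is controlled by $b_n/|S_{\tau_n,\tau_{n+1}}|$, and elsewhere $h_nS_{\tau_n,\cdot}\equiv 0$ so $h_n$ may harmlessly be truncated). The only real subtlety — and the step I would spend the most care on — is bookkeeping: the variance term $\tfrac{\lambda^2}{2}\sum_{n=0}^{N_t}b_n^2$ is \emph{pre-paid}, in that the summand $b_n^2$ is charged at time $\tau_n$, i.e. at the beginning of the interval whose fluctuation it is meant to absorb. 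This forces one to run the induction on $W_{\tau_n}\ge w_n$ with $w_n = \mathcal{E}_{\tau_n^-}$ (larger than $\mathcal{E}_{\tau_n}$ by the factor $e^{\lambda^2 b_n^2/2}$), rather than the naive $W_{\tau_n}\ge\mathcal{E}_{\tau_n}$; once the exponential is split across the $\tau_n$ in this way, each step is a direct application of the displayed scalar inequality.
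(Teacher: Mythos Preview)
Your proof is correct and follows essentially the same route as the paper's: both rely on the scalar Hoeffding inequality $e^{\lambda u - \lambda^2 b^2/2} \le 1 + \kappa(b)\,u$ for $|u|\le b$ (the paper's $f_n$ is exactly your $\kappa(b_n)h_n$) and build the strategy inductively by scaling the one-step position $\kappa(b_n)h_n$ by the wealth accumulated up to $\tau_n$. The only cosmetic difference is that the paper uses the \emph{actual} wealth $1+(H^\lambda\cdot S)_{\tau_n}$ as the multiplier, whereas you use the target $w_n = \mathcal{E}_{\tau_n^-}$; since $W_{\tau_n}\ge w_n$ by your induction hypothesis, the smaller multiplier works just as well and makes the bookkeeping you describe slightly more explicit.
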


\begin{pf}
Let $\lambda\in\R$. The proof is based on the following
deterministic inequality: if \eqref{e:hoeffding condition} is
satisfied, then for all $\omega\in\Omega$ and all $t \in[0,T]$ we
have that
%
\renewcommand{\theequation}{\arabic{equation}}
\setcounter{equation}{20}
\begin{eqnarray}
\label{e:hoeffing proof}
\nonumber
&&\exp \biggl(\lambda h_n(\omega)
S_{\tau_{n}\wedge t, \tau_{n+1}\wedge
t}(\omega) - \frac{\lambda^2}{2} b^2_n(
\omega) \biggr) - 1
\\
&&\quad \le\exp \biggl(- \frac{\lambda^2}{2} b^2_n(
\omega) \biggr) \frac{e^{\lambda b_n(\omega)}-e^{-\lambda b_n(\omega)}}{2b_n(\omega
)}h_n(\omega) S_{\tau_{n}\wedge t, \tau_{n+1}\wedge t}(
\omega)
\\
&&\quad =: f_n(\omega) S_{\tau_{n}\wedge t, \tau_{n+1}\wedge
t}(\omega).\nonumber
\end{eqnarray}
This inequality is shown in~(A.1) of~\cite{Vovk2012}. We define
$H^\lambda_t := \sum_{n} F_n 1_{(\tau_n, \tau_{n+1}]}(t)$, with $F_n$
that have to be specified. We choose $F_0 := f_0$, which is bounded and
$\F_{\tau_0}$-measurable, and on $[0,\tau_1]$ we obtain
\[
1 + \bigl(H^\lambda\cdot S\bigr)_t \ge\exp \biggl(\lambda
h_0 S_{\tau_{n}\wedge t,
\tau_{n+1}\wedge t} - \frac{\lambda^2}{2} b^2_0
\biggr).
\]
Observe also that $1 + (H^\lambda\cdot S)_{\tau_1} = 1 + f_0S_{\tau
_0,\tau_1}$ is bounded, because by assumption $h_0 S_{\tau_0,\tau_1}$
is bounded by the bounded random variable $b_0$.

Assume now that $F_k$ has been defined for $k = 0, \ldots, m-1$, that
\[
1 + \bigl(H^\lambda\cdot S\bigr)_t \ge\exp \Biggl( \lambda
\sum_{n=0}^{\infty} h_n
S_{\tau_{n}\wedge t, \tau_{n+1}\wedge t} - \frac{\lambda^2}{2}\sum_{n=0}^{N_t}b_n^2
\Biggr)
\]
for all $t \in[0, \tau_m]$, and that $1 + (H^\lambda\cdot S)_{\tau
_m}$ is bounded. We define $F_m := (1 + (H^\lambda\cdot S)_{\tau
_m})f_m$, which is $\F_{\tau_m}$-measurable and bounded. From \eqref
{e:hoeffing proof}, we obtain for $t \in[\tau_m, \tau_{m+1}]$
\begin{eqnarray*}
&&1 + \bigl(H^\lambda\cdot S\bigr)_t\\
&&\quad = 1 +
\bigl(H^\lambda\cdot S\bigr)_{\tau_m} + \bigl(1 +
\bigl(H^\lambda\cdot S\bigr)_{\tau_m}\bigr) f_m
S_{\tau_{m}\wedge t, \tau_{m+1}\wedge
t}
\\
&&\quad\ge\bigl(1 + \bigl(H^\lambda\cdot S\bigr)_{\tau_m}\bigr) \exp
\biggl(\lambda h_m S_{\tau
_{m}\wedge t, \tau_{m+1}\wedge t} - \frac{\lambda^2}{2}
b^2_m \biggr)
\\
&&\quad\ge\exp \Biggl( \lambda\sum_{n=0}^{\infty}
h_n S_{\tau_{n}\wedge t,
\tau_{n+1}\wedge t} - \frac{\lambda^2}{2}\sum
_{n=0}^{N_t}b_n^2 \Biggr),
\end{eqnarray*}
where in the last step we used the induction hypothesis. From the first
line of the previous equation, we also obtain that $1 + (H^\lambda
\cdot S)_{\tau_{m+1}}$ is bounded
because $f_m S_{\tau_{m}, \tau_{m+1}}$ is bounded for the same reason
that $f_0 S_{\tau_{0}, \tau_{1}}$ is bounded.
\end{pf}

\section{Davie's criterion}\label{a:davie}

It was already observed by Davie~\cite{Davie2007} that in certain
situations the rough path integral can be constructed as limit of
Riemann sums and not just compensated Riemann sums. Davie shows that
under suitable conditions, the usual Euler scheme (without ``area
compensation'') converges to the solution of a given rough differential
equation. But from there it is easily deduced that then also the rough
path integral is given as limit of Riemann sums. Here we show that
Davie's criterion implies our assumption \ref{assrie}.

Let $p \in(2,3)$ and let $\mathbb{S}=(S,A)$ be a $1/p$-H\"older
continuous rough path, that is $|S_{s,t}| \lesssim|t-s|^{1/p}$ and
$|A(s,t)| \lesssim|t-s|^{2/p}$. Write $\alpha:= 1/p$ and let $\beta
\in(1-\alpha, 2 \alpha)$. Davie assumes that there exists $C > 0$ such
that the area process $A$ satisfies
%
\begin{equation}
\label{eq:davie} \Biggl\vert\sum_{j=k}^{\ell-1} A
\bigl(jh,(j+1)h\bigr) \Biggr\vert\le C (\ell -k)^\beta h^{2 \alpha},
\end{equation}
whenever $0 < k < \ell$ are integers and $h>0$ such that $\ell h \leq
T$. Under these conditions, Theorem~7.1 of~\cite{Davie2007} implies
that for $F \in C^\gamma$ with $\gamma> p$ and for $t^n_k = kT/n$,
$n,k \in\N$, the Riemann sums
\[
\sum_{k=0}^{n-1} F(S_{t^n_k})
S_{t^n_k \wedge t, t^n_{k+1} \wedge t}, \quad t \in[0,T],
\]
converge uniformly to the rough path integral. But it can be easily
deduced from~\eqref{eq:davie} that the area process $A$ is given as
limit of non-anticipating Riemann sums along $(t^n)_n$. Indeed, letting
$h = T/n$,
\begin{eqnarray*}
&&\Biggl \vert\int_0^t S_s \,\dd
S_s- \sum_{k=0}^{n-1}
S_{t^n_k}S_{t^n_k
\wedge t, t^n_{k+1}\wedge t} \Biggr\vert\\[-2pt]
&&\quad= \Biggl\vert\sum_{k=0}^{n-1}
\biggl( \int_{t^n_k \wedge t}^{t^n_{k+1} \wedge t} S_s \,\dd
S_s - S_{t^n_k \wedge t}S_{t^n_k \wedge t, t^n_{k+1}\wedge t} \biggr)\Biggr \vert
\\[-2pt]
&&\quad = \Biggl| \sum_{k=0}^{n-1} A
\bigl(t^n_k \wedge t, t^n_{k+1}
\wedge t\bigr) \Biggr| \le \Biggl| \sum_{k=0}^{\lfloor t/h\rfloor- 1}
A_{kh, (k+1)h}\Biggr | + \bigl|A\bigl(\lfloor t/h\rfloor, t\bigr)\bigr|
\\[-2pt]
& &\quad\lesssim C \lfloor t/h \rfloor^\beta h^{2\alpha} +
h^{2\alpha}\|A\|_{2\alpha} \lesssim C t h^{2\alpha- \beta} +
h^{2\alpha
} \|A\|_{2\alpha}.
\end{eqnarray*}
Since $\beta< 2\alpha$, the right-hand side converges to 0 as $n$ goes
to $\infty$ (and thus $h$ goes to 0). Furthermore,~\eqref{eq:davie}
implies the ``uniformly bounded $p/2$-variation'' condition~\eqref
{eq:bounded p/2 variation assumption}:
\begin{eqnarray*}
\bigl\vert\bigl(S^n \cdot S\bigr)_{t^n_k,t^n_\ell} -
S_{t^n_k}S_{t^n_k,
t^n_\ell} \bigr\vert &\leq& \biggl\vert\int_{t^n_k}^{t^n_\ell}
S_s \,\dd S_s - S_{t^n_k}S_{t^n_k, t^n_\ell} \biggr\vert+
\Biggl\vert \sum_{j=k}^{\ell
-1} \biggl( \int
_{t^n_j}^{t^n_{j+1}} S_s \,\dd S_s -
S_{t^n_j} S_{t^n_j,t^n_{j+1}} \biggr) \Biggr\vert
\\[-2pt]
& \le&\| A \|_{2 \alpha} \bigl\vert t^n_\ell-
t^n_k \bigr\vert ^{2\alpha} + \Biggl\vert\sum
_{j=k}^{\ell-1} A_{t^n_{k},t^n_{k+1}} \Biggr\vert\\[-2pt]
&\le&\| A
\|_{2 \alpha}\bigl \vert t^n_\ell- t^n_k
\bigr\vert^{2\alpha} + C (\ell-k)^\beta h^{2\alpha}
\\[-2pt]
& \le& \| A \|_{2 \alpha} \bigl\vert t^n_\ell-
t^n_k\bigr \vert ^{2\alpha} + C \bigl|t^n_\ell-
t^n_k\bigr|^{2\alpha}.
\end{eqnarray*}
\end{appendix}\vspace*{-5pt}

\section*{Acknowledgements}
N. Perkowski was supported by the Fondation Sciences Math\'ematiques de Paris
(FSMP) and by a public grant overseen by the French National Research
Agency (ANR) as part of the ``Investissements d'Avenir'' program
(reference: ANR-10-LABX-0098). The main part of the research was
carried out while N. Perkowski was employed by Humboldt-Universit\"at zu Berlin.
D.J. Pr\"omel is supported by a Ph.D. scholarship of the DFG Research Training
Group 1845 ``Stochastic Analysis with Applications in Biology, Finance
and Physics''.


\begin{thebibliography}{43}
\bibitem{Acciaio2013}
\begin{bmisc}[auto:parserefs-M02]
\bauthor{\bsnm{Acciaio},~\bfnm{Beatrice}\binits{B.}},
\bauthor{\bsnm{Beiglb{\"{o}}ck},~\bfnm{Mathias}\binits{M.}},
\bauthor{\bsnm{Penkner},~\bfnm{Friedrich}\binits{F.}} \AND
\bauthor{\bsnm{Schachermayer},~\bfnm{Walter}\binits{W.}}
(\byear{2015}).
\bhowpublished{A model-free version of the fundamental theorem of asset pricing and the super-replication theorem.
\textit{Math. Finance}. To appear.
DOI:\doiurl{10.1111/mafi.12060}.}
\end{bmisc}\vadjust{\goodbreak}
%
\bptok{imsref}%
\endbibitem

\bibitem{Ankirchner2005}
\begin{bmisc}[auto:parserefs-M02]
\bauthor{\bsnm{Ankirchner},~\bfnm{Stefan}\binits{S.}}
(\byear{2005}).
\bhowpublished{Information and semimartingales.
Ph.D. thesis, Humboldt-Universit\"{a}t zu Berlin}.
\end{bmisc}
%
\bptok{imsref}%
\endbibitem

\bibitem{Avellaneda1995}
\begin{barticle}[auto:parserefs-M02]
\bauthor{\bsnm{Avellaneda},~\bfnm{Marco}\binits{M.}},
\bauthor{\bsnm{Levy},~\bfnm{Arnon}\binits{A.}} \AND
\bauthor{\bsnm{Par{\'{a}}s},~\bfnm{Antonio}\binits{A.}}
(\byear{1995}).
\btitle{Pricing and hedging derivative securities in markets with uncertain volatilities}.
\bjournal{Appl. Math. Finance}
\bvolume{2}
\bpages{73--88}.
\end{barticle}
%
\bptok{imsref}%
\endbibitem

\bibitem{Beiglbock2013}
\begin{barticle}[mr]
\bauthor{\bsnm{Beiglb{\"o}ck},~\bfnm{Mathias}\binits{M.}},
\bauthor{\bsnm{Henry-Labord{\`e}re},~\bfnm{Pierre}\binits{P.}} \AND
\bauthor{\bsnm{Penkner},~\bfnm{Friedrich}\binits{F.}}
(\byear{2013}).
\btitle{Model-independent bounds for option prices---A mass transport approach}.
\bjournal{Finance Stoch.}
\bvolume{17}
\bpages{477--501}.
\bid{issn={0949-2984}, mr={3066985}}
\end{barticle}
%
\bptok{imsref}%
\endbibitem

\bibitem{Bichteler1981}
\begin{barticle}[mr]
\bauthor{\bsnm{Bichteler},~\bfnm{Klaus}\binits{K.}}
(\byear{1981}).
\btitle{Stochastic integration and {$L^{p}$}-theory of semimartingales}.
\bjournal{Ann. Probab.}
\bvolume{9}
\bpages{49--89}.
\bid{issn={0091-1798}, mr={0606798}}
\end{barticle}
%
\bptok{imsref}%
\endbibitem

\bibitem{Coutin2005}
\begin{barticle}[mr]
\bauthor{\bsnm{Coutin},~\bfnm{Laure}\binits{L.}} \AND
\bauthor{\bsnm{Lejay},~\bfnm{Antoine}\binits{A.}}
(\byear{2005}).
\btitle{Semi-martingales and rough paths theory}.
\bjournal{Electron. J. Probab.}
\bvolume{10}
\bpages{761--785 (electronic)}.
\bid{doi={10.1214/EJP.v10-162}, issn={1083-6489}, mr={2164030}}
\end{barticle}
%
\bptok{imsref}%
\endbibitem

\bibitem{Coviello2011}
\begin{barticle}[mr]
\bauthor{\bsnm{Coviello},~\bfnm{Rosanna}\binits{R.}},
\bauthor{\bparticle{di} \bsnm{Girolami},~\bfnm{Cristina}\binits{C.}} \AND
\bauthor{\bsnm{Russo},~\bfnm{Francesco}\binits{F.}}
(\byear{2011}).
\btitle{On stochastic calculus related to financial assets without semimartingales}.
\bjournal{Bull. Sci. Math.}
\bvolume{135}
\bpages{733--774}.
\bid{doi={10.1016/j.bulsci.2011.06.008}, issn={0007-4497}, mr={2838099}}
\end{barticle}
%
\bptok{imsref}%
\endbibitem

\bibitem{Davie2007}
\begin{barticle}[mr]
\bauthor{\bsnm{Davie},~\bfnm{A.~M.}\binits{A.M.}}
(\byear{2007}).
\btitle{Differential equations driven by rough paths: An approach via discrete approximation}.
\bjournal{Appl. Math. Res. Express. AMRX}
\bvolume{2}
\bpages{Art. ID abm009, 40}.
\bid{issn={1687-1200}, mr={2387018}}
\end{barticle}
%
\bptok{imsref}%
\endbibitem

\bibitem{Davis2013}
\begin{barticle}[mr]
\bauthor{\bsnm{Davis},~\bfnm{Mark}\binits{M.}},
\bauthor{\bsnm{Ob{\l}{\'o}j},~\bfnm{Jan}\binits{J.}} \AND
\bauthor{\bsnm{Raval},~\bfnm{Vimal}\binits{V.}}
(\byear{2014}).
\btitle{Arbitrage bounds for prices of weighted variance swaps}.
\bjournal{Math. Finance}
\bvolume{24}
\bpages{821--854}.
\bid{doi={10.1111/mafi.12021}, issn={0960-1627}, mr={3274933}}
\end{barticle}
%
\bptok{imsref}%
\endbibitem

\bibitem{Davis2007}
\begin{barticle}[mr]
\bauthor{\bsnm{Davis},~\bfnm{Mark~H.~A.}\binits{M.H.A.}} \AND
\bauthor{\bsnm{Hobson},~\bfnm{David~G.}\binits{D.G.}}
(\byear{2007}).
\btitle{The range of traded option prices}.
\bjournal{Math. Finance}
\bvolume{17}
\bpages{1--14}.
\bid{doi={10.1111/j.1467-9965.2007.00291.x}, issn={0960-1627}, mr={2281789}}
\end{barticle}
%
\bptok{imsref}%
\endbibitem

\bibitem{Delbaen1994}
\begin{barticle}[mr]
\bauthor{\bsnm{Delbaen},~\bfnm{Freddy}\binits{F.}} \AND
\bauthor{\bsnm{Schachermayer},~\bfnm{Walter}\binits{W.}}
(\byear{1994}).
\btitle{A general version of the fundamental theorem of asset pricing}.
\bjournal{Math. Ann.}
\bvolume{300}
\bpages{463--520}.
\bid{doi={10.1007/BF01450498}, issn={0025-5831}, mr={1304434}}
\end{barticle}
%
\bptok{imsref}%
\endbibitem

\bibitem{Delbaen1995}
\begin{barticle}[mr]
\bauthor{\bsnm{Delbaen},~\bfnm{F.}\binits{F.}} \AND
\bauthor{\bsnm{Schachermayer},~\bfnm{W.}\binits{W.}}
(\byear{1995}).
\btitle{Arbitrage possibilities in {B}essel processes and their relations to local martingales}.
\bjournal{Probab. Theory Related Fields}
\bvolume{102}
\bpages{357--366}.
\bid{doi={10.1007/BF01192466}, issn={0178-8051}, mr={1339738}}
\end{barticle}
%
\bptok{imsref}%
\endbibitem

\bibitem{Delbaen2001}
\begin{bincollection}[mr]
\bauthor{\bsnm{Delbaen},~\bfnm{Freddy}\binits{F.}} \AND
\bauthor{\bsnm{Schachermayer},~\bfnm{Walter}\binits{W.}}
(\byear{2001}).
\btitle{Applications to mathematical finance}.
In \bbooktitle{Handbook of the Geometry of {B}anach Spaces, {V}ol. {I}}
\bpages{367--391}.
\blocation{Amsterdam}:
\bpublisher{North-Holland}.
\bid{doi={10.1016/S1874-5849(01)80011-5}, mr={1863697}}
\end{bincollection}
%
\bptok{imsref}%
\endbibitem

\bibitem{Denis2006}
\begin{barticle}[mr]
\bauthor{\bsnm{Denis},~\bfnm{Laurent}\binits{L.}} \AND
\bauthor{\bsnm{Martini},~\bfnm{Claude}\binits{C.}}
(\byear{2006}).
\btitle{A theoretical framework for the pricing of contingent claims in the presence of model uncertainty}.
\bjournal{Ann. Appl. Probab.}
\bvolume{16}
\bpages{827--852}.
\bid{doi={10.1214/105051606000000169}, issn={1050-5164}, mr={2244434}}
\end{barticle}
%
\bptok{imsref}%
\endbibitem

\bibitem{Dolinsky2013}
\begin{barticle}[mr]
\bauthor{\bsnm{Dolinsky},~\bfnm{Yan}\binits{Y.}} \AND
\bauthor{\bsnm{Soner},~\bfnm{H.~Mete}\binits{H.M.}}
(\byear{2014}).
\btitle{Martingale optimal transport and robust hedging in continuous time}.
\bjournal{Probab. Theory Related Fields}
\bvolume{160}
\bpages{391--427}.
\bid{doi={10.1007/s00440-013-0531-y}, issn={0178-8051}, mr={3256817}}
\end{barticle}
%
\bptok{imsref}%
\endbibitem

\bibitem{Folland1999}
\begin{bbook}[mr]
\bauthor{\bsnm{Folland},~\bfnm{Gerald~B.}\binits{G.B.}}
(\byear{1999}).
\btitle{Real Analysis},
\bedition{2nd} ed.
\bseries{Pure and Applied Mathematics (New York)}.
\blocation{New York}:
\bpublisher{Wiley}.
\bid{mr={1681462}}
\end{bbook}
%
\bptok{imsref}%
\endbibitem

\bibitem{Follmer1981}
\begin{bincollection}[mr]
\bauthor{\bsnm{F{\"o}llmer},~\bfnm{H.}\binits{H.}}
(\byear{1981}).
\btitle{Calcul d'{I}t\^o sans probabilit\'es}.
In \bbooktitle{Seminar on {P}robability, XV ({U}niv. {S}trasbourg, {S}trasbourg, 1979/1980) ({F}rench)}.
\bseries{Lecture Notes in Math.}
\bvolume{850}
\bpages{143--150}.
\bpublisher{Springer, Berlin}.
\bid{mr={0622559}}
\end{bincollection}
%
\bptok{imsref}%
\endbibitem

\bibitem{Friz2012Levy}
\begin{bmisc}[auto:parserefs-M02]
\bauthor{\bsnm{Friz},~\bfnm{Peter}\binits{P.}} \AND
\bauthor{\bsnm{Shekhar},~\bfnm{Atul}\binits{A.}}
(\byear{2014}).
\bhowpublished{General rough integration, {L}evy rough paths and a {L}evy--{K}intchine type formula.
Preprint. Available at \arxivurl{arXiv:1212.5888}}.
\end{bmisc}
%
\bptok{imsref}%
\endbibitem

\bibitem{Friz2014}
\begin{bbook}[mr]
\bauthor{\bsnm{Friz},~\bfnm{Peter~K.}\binits{P.K.}} \AND
\bauthor{\bsnm{Hairer},~\bfnm{Martin}\binits{M.}}
(\byear{2014}).
\btitle{A Course on Rough Paths: With an Introduction to Regularity Structures}.
\bseries{Universitext}.
\blocation{Cham}:
\bpublisher{Springer}.
\bid{doi={10.1007/978-3-319-08332-2}, mr={3289027}}
\end{bbook}
%
\bptok{imsref}%
\endbibitem

\bibitem{Friz2010}
\begin{bbook}[mr]
\bauthor{\bsnm{Friz},~\bfnm{Peter~K.}\binits{P.K.}} \AND
\bauthor{\bsnm{Victoir},~\bfnm{Nicolas~B.}\binits{N.B.}}
(\byear{2010}).
\btitle{Multidimensional Stochastic Processes as Rough Paths: Theory and Applications}.
\bseries{Cambridge Studies in Advanced Mathematics}
\bvolume{120}.
\blocation{Cambridge}:
\bpublisher{Cambridge Univ. Press}.
\bid{doi={10.1017/CBO9780511845079}, mr={2604669}}
\end{bbook}
%
\bptok{imsref}%
\endbibitem

\bibitem{Gubinelli2004}
\begin{barticle}[mr]
\bauthor{\bsnm{Gubinelli},~\bfnm{M.}\binits{M.}}
(\byear{2004}).
\btitle{Controlling rough paths}.
\bjournal{J. Funct. Anal.}
\bvolume{216}
\bpages{86--140}.
\bid{doi={10.1016/j.jfa.2004.01.002}, issn={0022-1236}, mr={2091358}}
\end{barticle}
%
\bptok{imsref}%
\endbibitem

\bibitem{Gubinelli2014Schauder}
\begin{bmisc}[auto:parserefs-M02]
\bauthor{\bsnm{Gubinelli},~\bfnm{Massimiliano}\binits{M.}},
\bauthor{\bsnm{Imkeller},~\bfnm{Peter}\binits{P.}} \AND
\bauthor{\bsnm{Perkowski},~\bfnm{Nicolas}\binits{N.}}
(\byear{2014}).
\bhowpublished{A {F}ourier approach to pathwise stochastic integration.
Preprint. Available at \arxivurl{arXiv:1410.4006}}.
\end{bmisc}
%
\bptok{imsref}%
\endbibitem

\bibitem{Imkeller2011}
\begin{bmisc}[auto:parserefs-M02]
\bauthor{\bsnm{Imkeller},~\bfnm{Peter}\binits{P.}} \AND
\bauthor{\bsnm{Perkowski},~\bfnm{Nicolas}\binits{N.}}
(\byear{2015}).
\bhowpublished{The existence of dominating local martingale measures.
\textit{Finance Stoch.} To appear. Available at \arxivurl{arXiv:1111.3885}}.
\end{bmisc}
%
\bptok{imsref}%
\endbibitem

\bibitem{Karandikar1995}
\begin{barticle}[mr]
\bauthor{\bsnm{Karandikar},~\bfnm{Rajeeva~L.}\binits{R.L.}}
(\byear{1995}).
\btitle{On pathwise stochastic integration}.
\bjournal{Stochastic Process. Appl.}
\bvolume{57}
\bpages{11--18}.
\bid{doi={10.1016/0304-4149(95)00002-O}, issn={0304-4149}, mr={1327950}}
\end{barticle}
%
\bptok{imsref}%
\endbibitem

\bibitem{Karatzas2007}
\begin{barticle}[mr]
\bauthor{\bsnm{Karatzas},~\bfnm{Ioannis}\binits{I.}} \AND
\bauthor{\bsnm{Kardaras},~\bfnm{Constantinos}\binits{C.}}
(\byear{2007}).
\btitle{The num\'eraire portfolio in semimartingale financial models}.
\bjournal{Finance Stoch.}
\bvolume{11}
\bpages{447--493}.
\bid{doi={10.1007/s00780-007-0047-3}, issn={0949-2984}, mr={2335830}}
\end{barticle}
%
\bptok{imsref}%
\endbibitem

\bibitem{Kardaras2011b}
\begin{barticle}[mr]
\bauthor{\bsnm{Kardaras},~\bfnm{Constantinos}\binits{C.}} \AND
\bauthor{\bsnm{Platen},~\bfnm{Eckhard}\binits{E.}}
(\byear{2011}).
\btitle{On the semimartingale property of discounted asset-price processes}.
\bjournal{Stochastic Process. Appl.}
\bvolume{121}
\bpages{2678--2691}.
\bid{doi={10.1016/j.spa.2011.06.010}, issn={0304-4149}, mr={2832419}}
\end{barticle}
%
\bptok{imsref}%
\endbibitem

\bibitem{Kelly2014}
\begin{bmisc}[auto:parserefs-M02]
\bauthor{\bsnm{Kelly},~\bfnm{David~T.~B.}\binits{D.T.B.}}
(\byear{2015}).
\bhowpublished{Rough path recursions and diffusion approximations.
\textit{Ann. Appl. Probab.} To appear.
Available at \arxivurl{arXiv:1402.3188}.}
\end{bmisc}
%
\bptok{imsref}%
\endbibitem

\bibitem{Lejay2012}
\begin{bincollection}[mr]
\bauthor{\bsnm{Lejay},~\bfnm{Antoine}\binits{A.}}
(\byear{2012}).
\btitle{Global solutions to rough differential equations with unbounded vector fields}.
In \bbooktitle{S\'eminaire de {P}robabilit\'es {XLIV}}.
\bseries{Lecture Notes in Math.}
\bvolume{2046}
\bpages{215--246}.
\blocation{Heidelberg}:
\bpublisher{Springer}.
\bid{doi={10.1007/978-3-642-27461-9_11}, mr={2953350}}
\end{bincollection}
%
\bptok{imsref}%
\endbibitem

\bibitem{Lyons2002}
\begin{bbook}[mr]
\bauthor{\bsnm{Lyons},~\bfnm{Terry}\binits{T.}} \AND
\bauthor{\bsnm{Qian},~\bfnm{Zhongmin}\binits{Z.}}
(\byear{2002}).
\btitle{System Control and Rough Paths}.
\bseries{Oxford Mathematical Monographs}.
\blocation{Oxford}:
\bpublisher{Oxford Univ. Press}.
\bid{doi={10.1093/acprof:oso/9780198506485.001.0001}, mr={2036784}}
\end{bbook}
%
\bptok{imsref}%
\endbibitem

\bibitem{Lyons1995a}
\begin{bincollection}[mr]
\bauthor{\bsnm{Lyons},~\bfnm{Terry~J.}\binits{T.J.}}
(\byear{1995}).
\btitle{The interpretation and solution of ordinary differential equations driven by rough signals}.
In \bbooktitle{Stochastic Analysis ({I}thaca, NY, 1993)}.
\bseries{Proc. Sympos. Pure Math.}
\bvolume{57}
\bpages{115--128}.
\blocation{Providence, RI}:
\bpublisher{Amer. Math. Soc}.
\bid{doi={10.1090/pspum/057/1335466}, mr={1335466}}
\end{bincollection}
%
\bptok{imsref}%
\endbibitem

\bibitem{Lyons1995}
\begin{barticle}[auto:parserefs-M02]
\bauthor{\bsnm{Lyons},~\bfnm{Terry~J.}\binits{T.J.}}
(\byear{1995}).
\btitle{Uncertain volatility and the risk-free synthesis of derivatives}.
\bjournal{Appl. Math. Finance}
\bvolume{2}
\bpages{117--133}.
\end{barticle}
%
\bptok{imsref}%
\endbibitem

\bibitem{Lyons1998}
\begin{barticle}[mr]
\bauthor{\bsnm{Lyons},~\bfnm{Terry~J.}\binits{T.J.}}
(\byear{1998}).
\btitle{Differential equations driven by rough signals}.
\bjournal{Rev. Mat. Iberoam.}
\bvolume{14}
\bpages{215--310}.
\bid{doi={10.4171/RMI/240}, issn={0213-2230}, mr={1654527}}
\end{barticle}
%
\bptok{imsref}%
\endbibitem

\bibitem{Lyons2007}
\begin{bbook}[mr]
\bauthor{\bsnm{Lyons},~\bfnm{Terry~J.}\binits{T.J.}},
\bauthor{\bsnm{Caruana},~\bfnm{Michael}\binits{M.}} \AND
\bauthor{\bsnm{L{\'e}vy},~\bfnm{Thierry}\binits{T.}}
(\byear{2007}).
\btitle{Differential Equations Driven by Rough Paths}.
\bseries{Lecture Notes in Math.}
\bvolume{1908}.
\blocation{Berlin}:
\bpublisher{Springer}.
\bid{mr={2314753}}
\end{bbook}
%
\bptok{imsref}%
\endbibitem

\bibitem{Nutz2012}
\begin{barticle}[mr]
\bauthor{\bsnm{Nutz},~\bfnm{Marcel}\binits{M.}}
(\byear{2012}).
\btitle{Pathwise construction of stochastic integrals}.
\bjournal{Electron. Commun. Probab.}
\bvolume{17}
\bpages{1--7}.
\bid{doi={10.1214/ECP.v17-2099}, issn={1083-589X}, mr={2950190}}
\end{barticle}
%
\bptok{imsref}%
\endbibitem

\bibitem{Perkowski2014a}
\begin{bmisc}[auto:parserefs-M02]
\bauthor{\bsnm{Perkowski},~\bfnm{Nicolas~Simon}\binits{N.S.}}
(\byear{2014}).
\bhowpublished{Studies of robustness in stochastic analysis and mathematical finance.
Ph.D. thesis,
Humboldt-Universit\"{a}t zu Berlin}.
\end{bmisc}
%
\bptok{imsref}%
\endbibitem

\bibitem{Ruf2013}
\begin{barticle}[mr]
\bauthor{\bsnm{Ruf},~\bfnm{Johannes}\binits{J.}}
(\byear{2013}).
\btitle{Hedging under arbitrage}.
\bjournal{Math. Finance}
\bvolume{23}
\bpages{297--317}.
\bid{doi={10.1111/j.1467-9965.2011.00502.x}, issn={0960-1627}, mr={3034079}}
\end{barticle}
%
\bptok{imsref}%
\endbibitem

\bibitem{Russo1993}
\begin{barticle}[mr]
\bauthor{\bsnm{Russo},~\bfnm{Francesco}\binits{F.}} \AND
\bauthor{\bsnm{Vallois},~\bfnm{Pierre}\binits{P.}}
(\byear{1993}).
\btitle{Forward, backward and symmetric stochastic integration}.
\bjournal{Probab. Theory Related Fields}
\bvolume{97}
\bpages{403--421}.
\bid{doi={10.1007/BF01195073}, issn={0178-8051}, mr={1245252}}
\end{barticle}
%
\bptok{imsref}%
\endbibitem

\bibitem{Tao2011}
\begin{bbook}[mr]
\bauthor{\bsnm{Tao},~\bfnm{Terence}\binits{T.}}
(\byear{2011}).
\btitle{An Introduction to Measure Theory}.
\bseries{Graduate Studies in Mathematics}
\bvolume{126}.
\blocation{Providence, RI}:
\bpublisher{Amer. Math. Soc}.
\bid{doi={10.1090/gsm/126}, mr={2827917}}
\end{bbook}
%
\bptok{imsref}%
\endbibitem

\bibitem{Vovk2008}
\begin{barticle}[mr]
\bauthor{\bsnm{Vovk},~\bfnm{Vladimir}\binits{V.}}
(\byear{2008}).
\btitle{Continuous-time trading and the emergence of volatility}.
\bjournal{Electron. Commun. Probab.}
\bvolume{13}
\bpages{319--324}.
\bid{doi={10.1214/ECP.v13-1383}, issn={1083-589X}, mr={2415140}}
\end{barticle}
%
\bptok{imsref}%
\endbibitem

\bibitem{Vovk2012}
\begin{barticle}[mr]
\bauthor{\bsnm{Vovk},~\bfnm{Vladimir}\binits{V.}}
(\byear{2012}).
\btitle{Continuous-time trading and the emergence of probability}.
\bjournal{Finance Stoch.}
\bvolume{16}
\bpages{561--609}.
\bid{doi={10.1007/s00780-012-0180-5}, issn={0949-2984}, mr={2972235}}
\end{barticle}
%
\bptok{imsref}%
\endbibitem

\bibitem{Vovk2011}
\begin{barticle}[mr]
\bauthor{\bsnm{Vovk},~\bfnm{Vladimir}\binits{V.}}
(\byear{2015}).
\btitle{It\^o calculus without probability in idealized financial markets}.
\bjournal{Lith. Math. J.}
\bvolume{55}
\bpages{270--290}.
\bid{doi={10.1007/s10986-015-9280-1}, issn={0363-1672}, mr={3347596}}
\bptnote{check volume, check pages, check year}%
\end{barticle}
%
\bptok{imsref}%
\endbibitem

\bibitem{Williams2001}
\begin{barticle}[mr]
\bauthor{\bsnm{Williams},~\bfnm{David~R.~E.}\binits{D.R.E.}}
(\byear{2001}).
\btitle{Path-wise solutions of stochastic differential equations driven by L\'evy processes}.
\bjournal{Rev. Mat. Iberoam.}
\bvolume{17}
\bpages{295--329}.
\bid{doi={10.4171/RMI/296}, issn={0213-2230}, mr={1891200}}
\end{barticle}
%
\bptok{imsref}%
\endbibitem

\bibitem{Young1936}
\begin{barticle}[mr]
\bauthor{\bsnm{Young},~\bfnm{L.~C.}\binits{L.C.}}
(\byear{1936}).
\btitle{An inequality of the H\"older type, connected with {S}tieltjes integration}.
\bjournal{Acta Math.}
\bvolume{67}
\bpages{251--282}.
\bid{doi={10.1007/BF02401743}, issn={0001-5962}, mr={1555421}}
\end{barticle}
%
\bptok{imsref}%
\endbibitem
\end{thebibliography}
%

\printhistory
\end{document}